\documentclass[reqno]{amsart}
\usepackage{geometry}
\geometry{a4paper,left=3cm,right=3cm,top=3.2cm,bottom=3.0cm}
\usepackage{amssymb}
\usepackage{amsmath,amsthm}
\usepackage{amsmath,amscd}
\usepackage{amsfonts}
\usepackage{calrsfs}
\usepackage{pstricks}
\usepackage{epsf}
\usepackage{xypic}
\usepackage{graphicx}
\usepackage{extarrows}
\usepackage{chemarrow}
\usepackage{xcolor}
\usepackage[colorlinks=true,linkcolor=blue,citecolor=blue]{hyperref}
\usepackage{amsmath}
\usepackage[noadjust]{cite}
\newtheorem{theorem}{Theorem}[section]
\newtheorem{lemma}[theorem]{Lemma}
\newtheorem{corollary}[theorem]{Corollary}
\newtheorem{proposition}[theorem]{Proposition}
\theoremstyle{definition}
\newtheorem{definition}[theorem]{Definition}
\newtheorem{example}[theorem]{Example}

\theoremstyle{remark}
\newtheorem{remark}[theorem]{Remark}

\numberwithin{equation}{section}



\DeclareSymbolFont{EulerExtension}{U}{euex}{m}{n}
\DeclareMathSymbol{\euintop}{\mathop} {EulerExtension}{"52}
\DeclareMathSymbol{\euointop}{\mathop} {EulerExtension}{"48}


\def\A{\mathcal{A}}

\def\M{\mathcal{M}}

\def\I{\mathcal{I}}
\def\E{\mathbb{E}}
\def\B{\mathcal{B}}
\def\C{\mathcal{C}}
\def\s{\mathfrak{s}}

\def\del{\delta}
\def\dr{\ar@{->}[r]}

\def\Y{\mathcal{Y}}

\def\add{\mbox{add}}
\def\Ext{\mbox{Ext}}
\def\Hom{\mbox{Hom}}

\def\proj{\mbox{Proj}}
\def\inj{\mbox{Inj}}
\def\Ker{\mbox{Ker}\hspace{.01in}}
\def\Im{\mbox{Im}\hspace{.01in}}

\begin{document}

\title{$\s$-Recollements and its localization}

\author{Yonggang Hu}
\address{Department of Mathematical Sciences, Tsinghua University, 100084 Beijing,  P. R. China }
\email{huyonggang@emails.bjut.edu.cn}
\thanks{Yonggang Hu  was supported by National Natural Science Foundation of China (Grant  Nos. 11671126, 12071120). Panyue Zhou was supported
	by the National Natural Science Foundation of China (Grant No. 11901190) and by the Scientific Research Fund of Hunan Provincial Education Department (Grant No. 19B239).}

\author{Panyue Zhou}
\address{College of Mathematics, Hunan Institute of Science and
Technology, 414006 Yueyang, Hunan,
	P. R. China}
\email{panyuezhou@163.com}
\thanks{}

\subjclass[2020]{Primary 18G80; 16D90; 18E10; Secondary 16D10; 18G15}



\keywords{Recollements; Extriangulated categories; Localization}

\begin{abstract}
We introduce a new concept of $\s$-recollements of extriangulated categories, which generalizes
recollements of abelian categories, recollements of triangulated categories, as well as recollements of extriangulated categories. Moreover, some basic properties of $\s$-recollements are presented. We also discuss  the behavior of the localization theory on the adjoint pair of exact functors. Finally, we provide a method to obtain a recollement of triangulated categories from $\s$-recollements of extriangulated categories via the  localization theory.

\end{abstract}

\maketitle

\section{Introduction}
The recollement of triangulated categories was introduced first by Beilinson, Bernstein, and Deligne, see \cite{BBD}. It is an important tool in algebraic geometry \cite{BBD} and representation theory \cite{CPS,PS}.  A fundamental example of a recollement  of abelian categories appeared in the construction of perverse sheaves by MacPherson and Vilonen \cite{MV}. A recollement of triangulated (or, abelian,  extriangulated) categories, see the references \cite{BBD,MV,WWZ}, is  a
diagram of functors between triangulated (or, abelian,  extriangulated) categories of the following shape (\ref{dia0-0}), which satisfies some assumptions.

\begin{equation}\label{dia0-0}
  \xymatrix{\mathcal{A}\ar[rr]&&\ar@/_1pc/[ll]\ar@/^1pc/[ll]\mathcal{B}
\ar[rr]&&\ar@/_1pc/[ll]\ar@/^1pc/[ll]\mathcal{C}}
\end{equation}
\vskip 4mm
From the diagram (\ref{dia0-0}), recollements of  triangulated (abelian) categories can be viewed as `exact sequences' of triangulated (abelian) categories, which describe the middle term by a subcategory and a quotient category. It should be noted that recollements of abelian or triangulated categories are used widely in studying representation theory and homological theory. For instance, pioneer researchers have studied connections between recollements of triangulated categories in connection with tilting theory\cite{LAKL,BaP,K,Y,CX,CX01,CX03}, homological conjectures \cite{CX02,Hap,QH}, Hochschild theory \cite{HY} and homological dimensions \cite{CX02,P}, and so on.

Extriangulated categories were recently introduced by Nakaoka and Palu \cite{NP} by extracting those properties of ${\rm \Ext}^1$ on exact categories and on triangulated categories that seem relevant from the point of view of cotorsion pairs. In particular, triangulated categories and exact categories are extriangulated categories. There are a lot of examples of extriangulated categories which are neither triangulated categories nor exact categories, see \cite{NP,ZZ,HZZ,ZhZ,NP1}.

 Under the assumption of the (WIC) condition, Wang-Wei-Zhang \cite{WWZ} introduced the notion of compatible morphisms. Using this notion, they modified the extriangulated functor between two extriangulated categories, which has been defined in \cite{B-TS}, and then, introduced the concepts of right (left) exact functors in extriangulated categories. By these frameworks,
Wang-Wei-Zhang \cite{WWZ} gave a simultaneous generalization of recollements of abelian categories
and triangulated categories, which we call  the recollement of extriangulated categories. As an application, they glued the cotorsion pairs along recollements of extriangulated categories, which covers the results obtained by Chen in \cite{ChenJM}. Later, He-Hu-Zhou \cite{HHZ}  glued the torsion pairs via  the recollements of extriangulated categories. This unifies their results of Chen \cite{ChenJM}
and Ma-Huang \cite{MH} in the framework of extriangulated categories.
 Recently, some homological aspects of recollements of extriangulated categories were investigated. For instance, Gu-Ma-Tan \cite{GMT} studied the  behavior of homological dimension along the recollements of extriangulated categories.

In this paper, we devote to giving a generalization of the recollements of extriangulated categories in the sense of \cite{WWZ}. Without  the (WIC) condition, we introduce a new concept, called $\s$-recollements of extriangulated categories. Moreover,  in this definition, we do not need to modify the extriangulated functor (in this paper, we call exact functor) in the sense of \cite{B-TS}. Because of this point, the notion of $\s$-recollements looks more natural. We present the basic properties of $\s$-recollements, which are also appeared in \cite{WWZ}.
We also  show that  recollements of abelian and triangulated categories, as well as recollements of extriangulated categories, are all $\s$-recollements. To clarify  the relationship among them, we provide the following figure.
$$\xymatrix@R=1em@C=1em{&&\fcolorbox{black}{green}{\text{$\begin{array}{c}
                           \textrm{Recollements~of} \\
                           \textrm{ triangulated~categories \cite{BBD}}
                          \end{array}$}
}\ar@{=>}[2,0]&&&\\
&&&&&\\
\fcolorbox{black}{lightgray}{\text{$\begin{array}{c}
                           \s\textrm{-Recollements~of} \\
                           \textrm{ extriangulated~categories}
                          \end{array}$}
}\ar@{=>}[-2,2]-<24mm,-1mm>^*+\txt{Triangulation}\ar@{=>}[2,2]+<-20.5mm,-1mm>_*+{\textrm{Abelization}}&&\ar@{=>}[ll]_{\textrm{WIC}}\fcolorbox{black}{pink}{\text{$\begin{array}{c}
                           \textrm{Recollements~of~extriangulated~} \\
                           \textrm{categories~in \cite{WWZ}}
                          \end{array}$}}&&&\\
                          &&&&&\\
                          &&\fcolorbox{black}{cyan}{\text{$\begin{array}{c}
                           \textrm{Recollements~of} \\
                           \textrm{ abelian~categories \cite{MV}}
                          \end{array}$}
}\ar@{=>}[-2,0]&&&}$$
On the other hand, we study the localization theory on the adjoint pairs, where the localization theory of extriangulated categories was introduced by \cite{NP1} and unified the Serre quotient of abelian categories and the Verdier quotient of triangulated categories. Under some mild conditions, we prove that each adjoint pair of exact functors between two extriangulated categories induces a new adjoint pair of exact functors between  the localizations of extriangulated categories, see Theorem \ref{prop2-1}.

Recollements are closely related to localization. It is well-known that there is a  bijection  between recollements of triangulated categories and torsion torsion-free triples (TTF-triples, for short), see \cite{BBD,BR,N,N01}.  Psaroudakis and Vit\'{o}ria \cite[Theorem 4.3]{P01} shown that recollements
of abelian categories (up to equivalence) are in bijection with its bilocalising TTF classes. Because of these points, one can regard $\mathcal{C}$ in the diagram (\ref{dia0-0}) of triangulated  or abelian categories  as a localization of $\mathcal{B}$. In fact, it is a Verdier quotient or Serre quotient of $\mathcal{B}$.  Recently, Nakaoka,  Ogawa, and Sakai prove that in the context of recollements of extriangulated categories, $\mathcal{C}$ is also a localization of $\mathcal{B}$ in the sense of \cite{NP1}. In this note, we prove that this phenomenon still works in the situation of $\s$-recollements, see Proposition \ref{prop3-5}.

The paper is organized as follows: we collect some basic definitions and properties
of extriangulated categories and $\s$-exact functors in Section 2. In Section 3, we discuss the localization theory of adjoint pairs. In Section 4,
we introduce the $\s$-recollement of extriangulated categories and give some basic properties. In Section 5, applying the localization theory into $\s$-recollements, we construct a recollement of triangulated categories from an $\s$-recollement.

\section{Preliminaries}
\smallskip

\subsection{Some basic notions and properties in extriangulated categories}
~\par
Let us briefly recall some definitions and basic properties of extriangulated categories from \cite{NP}.
We omit some details here, but the reader can find
them in \cite{NP}.

Let $\mathcal{C}$ be an additive category equipped with an additive bifunctor
$$\mathbb{E}: \mathcal{C}^{\rm op}\times \mathcal{C}\rightarrow {\rm Ab},$$
where ${\rm Ab}$ is the category of abelian groups. For any objects $A, C\in\mathcal{C}$, an element $\delta\in \mathbb{E}(C,A)$ is called an $\mathbb{E}$-extension.
Let $\mathfrak{s}$ be a correspondence which associates an equivalence class $$\mathfrak{s}(\delta)=\xymatrix@C=0.8cm{[A\ar[r]^x
 &B\ar[r]^y&C]}$$ to any $\mathbb{E}$-extension $\delta\in\mathbb{E}(C, A)$. This $\mathfrak{s}$ is called a {\it realization} of $\mathbb{E}$, if it makes the diagrams in \cite[Definition 2.9]{NP} commutative.
 A triplet $(\mathcal{C}, \mathbb{E}, \mathfrak{s})$ is called an {\it extriangulated category} if it satisfies the following conditions.
\begin{enumerate}
\item[\rm (1)] $\mathbb{E}\colon\mathcal{C}^{\rm op}\times \mathcal{C}\rightarrow \rm{Ab}$ is an additive bifunctor.
\vspace{0.5mm}

\item[\rm (2)] $\mathfrak{s}$ is an additive realization of $\mathbb{E}$.
\vspace{0.5mm}

\item[\rm (3)] $\mathbb{E}$ and $\mathfrak{s}$  satisfy the compatibility
conditions in \cite[Definition 2.12]{NP}.
 \end{enumerate}

\begin{remark}
Note that both exact categories and triangulated categories are extriangulated categories, see \cite[Example 2.13]{NP} and extension closed subcategories of triangulated categories are
again extriangulated, see \cite[Remark 2.18]{NP}. Moreover, there exist extriangulated categories which
are neither exact categories nor triangulated categories, see \cite{NP,ZZ,HZZ,ZhZ,NP1}.
\end{remark}

We will use the following terminology, which comes from \cite{NP}.
\begin{definition}
Let $(\C,\E,\s)$ be an extriangulated category.
\begin{itemize}
\item[(1)] A sequence $A\xrightarrow{~x~}B\xrightarrow{~y~}C$ is called a {\it conflation} if it realizes some $\E$-extension $\del\in\E(C,A)$.
    In this case, $x$ is called an {\it inflation} and $y$ is called a {\it deflation}.

\item[(2)] If a conflation  $A\xrightarrow{~x~}B\xrightarrow{~y~}C$ realizes $\delta\in\mathbb{E}(C,A)$, we call the pair $( A\xrightarrow{~x~}B\xrightarrow{~y~}C,\delta)$ an {\it $\E$-triangle}, and write it in the following way.
$$A\overset{x}{\longrightarrow}B\overset{y}{\longrightarrow}C\overset{\delta}{\dashrightarrow}$$
We usually do not write this $``\delta"$ if it is not used in the argument.

\item[(3)] Let $A\overset{x}{\longrightarrow}B\overset{y}{\longrightarrow}C\overset{\delta}{\dashrightarrow}$ and $A^{\prime}\overset{x^{\prime}}{\longrightarrow}B^{\prime}\overset{y^{\prime}}{\longrightarrow}C^{\prime}\overset{\delta^{\prime}}{\dashrightarrow}$ be any pair of $\E$-triangles. If a triplet $(a,b,c)$ realizes $(a,c)\colon\delta\to\delta^{\prime}$, then we write it as
$$\xymatrix{
A \ar[r]^x \ar[d]^a & B\ar[r]^y \ar[d]^{b} & C\ar@{-->}[r]^{\del}\ar[d]^c&\\
A'\ar[r]^{x'} & B' \ar[r]^{y'} & C'\ar@{-->}[r]^{\del'} &}$$
and call $(a,b,c)$ a {\it morphism of $\E$-triangles}.

\item[(4)] An object $P\in\C$ is called {\it projective} if
for any $\E$-triangle $A\overset{x}{\longrightarrow}B\overset{y}{\longrightarrow}C\overset{\delta}{\dashrightarrow}$ and any morphism $c\in\C(P,C)$, there exists $b\in\C(P,B)$ satisfying $yb=c$.
We denote the subcategory of projective objects by $ \mathcal{P}\subseteq\C$. Dually, the subcategory of injective objects is denoted by $ \mathcal{I}\subseteq\C$.

\item[(5)] We say that $\C$ {\it has enough projective objects} if
for any object $C\in\C$, there exists an $\E$-triangle
$A\overset{x}{\longrightarrow}P\overset{y}{\longrightarrow}C\overset{\delta}{\dashrightarrow}$
satisfying $P\in \mathcal{P}$. We can define the notion of having enough injectives dually.

\end{itemize}
\end{definition}

For an conflation $A\xrightarrow{x} B
\xrightarrow{y} C$ in an extriangulated category, we write
$C = \textrm{Cone}(x)$ and call it a cone of $x$. This is uniquely determined by $x$ up to
isomorphisms. Dually we write $A = \textrm{CoCone}(y)$ and call it a cocone of $y$, which is
uniquely determined by y up to isomorphisms.\par

\begin{remark}\cite[Proposition 3.24]{NP}
Let $(\C, \E, \s)$ be an extriangulated category.
Then

(a) $P$ is projective object in $\C$ if and only if $\E(P,C)$=0, for any $C\in\C$.

(b) $I$ is injective object in $\C$ if and only if $\E(C,I)$=0, for any $C\in\C$.
\end{remark}
We modify the notions of right (left) exact sequences and right (left) exact functors in \cite{WWZ}.
\begin{definition}Let $(\C, \E, \s)$ be an extriangulated category.
 A sequence $A\stackrel{f}{\longrightarrow}B\stackrel{g}{\longrightarrow}C$ in $\mathcal{C}$ is said to be {\em right $\s$-exact} if
there exists an $\mathbb{E}$-triangle $K\stackrel{h_{2}}{\longrightarrow}B\stackrel{g}{\longrightarrow}C\stackrel{}\dashrightarrow$ and a deflation $h_{1}:A\rightarrow K$, such that $f=h_2h_1$. Dually one can also define the {\em  left $\s$-exact} sequences.


A $4$-term $\mathbb{E}$-triangle sequence $A{\stackrel{f}\longrightarrow}B\stackrel{g}{\longrightarrow}C\stackrel{h}{\longrightarrow}D$ is called {\em  $\s$-exact}  if there exist $\mathbb{E}$-triangles $A\stackrel{f}{\longrightarrow}B\stackrel{g_1}{\longrightarrow}K\stackrel{}\dashrightarrow$
and $K\stackrel{g_{2}}{\longrightarrow}C\stackrel{h}{\longrightarrow}D\stackrel{}\dashrightarrow$ such that $g=g_2g_1$.
\end{definition}

\begin{definition}
Let $(\mathcal{A},\mathbb{E}_{\mathcal{A}},\mathfrak{s}_{\mathcal{A}})$ and $(\mathcal{B},\mathbb{E}_{\mathcal{B}},\mathfrak{s}_{\mathcal{B}})$ be extriangulated categories. An additive covariant functor $F:\mathcal{A}\rightarrow \mathcal{B}$ is called a {\em right $\s$-exact functor} if it satisfies the following conditions
\begin{itemize}
   \item [(1)] If $A\stackrel{a}{\longrightarrow}B\stackrel{b}{\longrightarrow}C$ is right exact in $\mathcal{A}$, then $FA\stackrel{Fa}{\longrightarrow}FB\stackrel{Fb}{\longrightarrow}FC$ is right exact in $\mathcal{B}$. (Then for any $\mathbb{E}_{\mathcal{A}}$-triangle $A\stackrel{f}{\longrightarrow}B\stackrel{g}{\longrightarrow}C\stackrel{\delta}\dashrightarrow$, there exists an $\mathbb{E}_{\mathcal{B}}$-triangle $A'\stackrel{x}{\longrightarrow}FB\stackrel{Fg}{\longrightarrow}FC\stackrel{}\dashrightarrow$ such that $Ff=xy$ and $y: FA\rightarrow A'$ is a deflation.  Moreover, $A'$ is uniquely determined up to isomorphism.)
 \item [(2)] There exists a natural transformation $$\eta=\{\eta_{(C,A)}:\mathbb{E}_{\mathcal{A}}(C,A)\longrightarrow\mathbb{E}_{\mathcal{B}}(F^{op}C,A')\}_{(C,A)\in{\mathcal{A}}^{\rm op}\times\mathcal{A}}$$ such that $\mathfrak{s}_{\mathcal{B}}(\eta_{(C,A)}(\delta))=[A'\stackrel{x}{\longrightarrow}FB\stackrel{Fg}{\longrightarrow}FC]$.

 \end{itemize}
Dually, we define the {\em left  $\s$-exact functor} between two extriangulated categories.
\end{definition}

\begin{definition}[\cite{B-TS}] 
Let $(\mathcal{A},\mathbb{E}_{\mathcal{A}},\mathfrak{s}_{\mathcal{A}})$ and $(\mathcal{B},\mathbb{E}_{\mathcal{B}},\mathfrak{s}_{\mathcal{B}})$ be extriangulated categories.  We say an additive covariant functor $F:\mathcal{A}\rightarrow \mathcal{B}$ is an {\em $\s$-exact functor} (or, simply exact functor) if there exists a natural transformation $$\eta=\{\eta_{(C,A)}\}_{(C,A)\in{\mathcal{A}}^{\rm op}\times\mathcal{A}}:\mathbb{E}_{\mathcal{A}}(-,-)\Rightarrow\mathbb{E}_{\mathcal{B}}(F(-),F(-)).$$
which satisfies $\mathfrak{s}_{\mathcal{B}}(\eta_{(C,A)}(\delta))=[F(A)\stackrel{F(x)}{\longrightarrow}F(B)\stackrel{F(y)}{\longrightarrow}F(C)]$.
for any $\mathfrak{s}_{\mathcal{A}}(\delta)=[A\stackrel{x}{\longrightarrow}B\stackrel{y}{\longrightarrow}C]$.
\end{definition}
It is easy to obtain the following result.
\begin{proposition}
 Let $(\mathcal{A},\mathbb{E}_{\mathcal{A}},\mathfrak{s}_{\mathcal{A}})$ and $(\mathcal{B},\mathbb{E}_{\mathcal{B}},\mathfrak{s}_{\mathcal{B}})$ be extriangulated categories. An additive covariant functor $F: \mathcal{A}\rightarrow\mathcal{B}$ is exact if and only if $F$ is both left $\s$-exact and $\s$-right exact.
\end{proposition}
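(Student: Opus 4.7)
The plan is to prove the two implications separately, with the forward direction (exact implies both right and left $\s$-exact) following rather directly from the definitions, and the backward direction requiring a careful gluing argument.

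For the forward implication, suppose $F$ is $\s$-exact. Then for every $\mathbb{E}_\mathcal{A}$-triangle $A \xrightarrow{x} B \xrightarrow{y} C \dashrightarrow \delta$, the natural transformation supplied by the $\s$-exact structure already realizes $\mathfrak{s}_\mathcal{B}(\eta_{(C,A)}(\delta)) = [FA \xrightarrow{Fx} FB \xrightarrow{Fy} FC]$. To verify right $\s$-exactness, I will observe that any right $\s$-exact sequence $A \xrightarrow{a} B \xrightarrow{b} C$ factors as $a = h_{2}h_{1}$ with $h_{1}$ a deflation and $K \xrightarrow{h_{2}} B \xrightarrow{b} C$ an $\mathbb{E}$-triangle; applying $F$ preserves the $\mathbb{E}$-triangle, and $Fh_{1}$ remains a deflation (since $h_{1}$ is the terminal map of another $\mathbb{E}$-triangle, which $F$ also preserves). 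For clause~(2) of the definition, I take $A' := FA$ together with $y := \mathrm{id}_{FA}$ (trivially a deflation, via the split $\mathbb{E}$-triangle $0 \to FA \xrightarrow{\mathrm{id}} FA \dashrightarrow 0$) and inherit $\eta_{(C,A)}$ directly from the $\s$-exact structure. Left $\s$-exactness then follows by a dual argument.

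For the backward implication, fix any $\mathbb{E}_\mathcal{A}$-triangle $A \xrightarrow{x} B \xrightarrow{y} C \dashrightarrow \delta$. Right $\s$-exactness furnishes an $\mathbb{E}_\mathcal{B}$-triangle $A' \xrightarrow{x'} FB \xrightarrow{Fy} FC \dashrightarrow \eta^{r}(\delta)$ and a deflation $y^{*}: FA \twoheadrightarrow A'$ satisfying $Fx = x'y^{*}$. Left $\s$-exactness furnishes an $\mathbb{E}_\mathcal{B}$-triangle $FA \xrightarrow{Fx} FB \xrightarrow{y''} C'' \dashrightarrow \eta^{l}(\delta)$ and an inflation $x^{*}: C'' \rightarrowtail FC$ satisfying $Fy = x^{*}y''$. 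Since both $\mathbb{E}_\mathcal{B}$-triangles share $FB$ in the middle slot, the defining relations $Fx = x'y^{*}$ and $Fy = x^{*}y''$ assemble the triple $(y^{*}, \mathrm{id}_{FB}, x^{*})$ into a morphism of $\mathbb{E}_\mathcal{B}$-triangles, and the compatibility of realizations yields $y^{*}_{*}\eta^{l}(\delta) = (x^{*})^{*}\eta^{r}(\delta)$ in $\mathbb{E}_\mathcal{B}(C'', A')$.

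The main obstacle is to conclude that the deflation $y^{*}$ and the inflation $x^{*}$ are both isomorphisms, after which $A' \cong FA$ and $C'' \cong FC$ canonically and the sequence $FA \xrightarrow{Fx} FB \xrightarrow{Fy} FC$ is recognised as an $\mathbb{E}_\mathcal{B}$-triangle. The strategy is to exploit the uniqueness-up-to-isomorphism of $A'$ in the right $\s$-exact definition together with the compatibility identity above: one shows that in the morphism of $\mathbb{E}_\mathcal{B}$-triangles $(y^{*}, \mathrm{id}_{FB}, x^{*})$, the triviality of the middle component together with the fact that $Fx$ is already an inflation and $Fy$ is already a deflation forces the cocone of $y^{*}$ and the cone of $x^{*}$ to vanish, via a careful dance with axioms (ET3), (ET3)$^{\mathrm{op}}$ and the uniqueness clauses. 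This is the delicate step where the real calculation lives. Once the isomorphism claim is in place, I set $\eta_{(C,A)}(\delta) := (y^{*})^{-1}_{*}\,\eta^{r}_{(C,A)}(\delta) \in \mathbb{E}_\mathcal{B}(FC, FA)$; this realizes the $\mathbb{E}_\mathcal{B}$-triangle $FA \xrightarrow{Fx} FB \xrightarrow{Fy} FC$, and naturality in $(C,A)$ is inherited from that of $\eta^{r}$, completing the proof that $F$ is $\s$-exact.
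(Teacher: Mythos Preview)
Your forward direction is fine. The backward direction, however, has two genuine gaps that you should address.

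First, your assertion that the commuting squares $Fx = x'y^{*}$ and $Fy = x^{*}y''$ ``assemble the triple $(y^{*}, \mathrm{id}_{FB}, x^{*})$ into a morphism of $\mathbb{E}_\mathcal{B}$-triangles'' is not justified. In an extriangulated category, a morphism of $\mathbb{E}$-triangles is by definition a pair $(a,c)$ with $a_{*}\delta = c^{*}\delta'$; the existence of a compatible middle map is a \emph{consequence}, not the defining data. Having commutative squares does not, in general, force the compatibility $(y^{*})_{*}\eta^{l}(\delta) = (x^{*})^{*}\eta^{r}(\delta)$ you then claim to deduce. What you \emph{can} do is apply (ET3) to the left square to obtain \emph{some} $c:C''\to FC$ with $(y^{*},\mathrm{id},c)$ a morphism of $\mathbb{E}$-triangles, but there is no a~priori reason this $c$ equals your inflation $x^{*}$; indeed $y''$ is only a weak cokernel, so $cy''=Fy=x^{*}y''$ does not let you cancel.

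Second, and more seriously, the step you yourself flag as ``the delicate step where the real calculation lives'' is not actually carried out. You assert that the cocone of $y^{*}$ and the cone of $x^{*}$ vanish ``via a careful dance with axioms (ET3), (ET3)$^{\mathrm{op}}$ and the uniqueness clauses,'' but you give no argument. The standard tool here---that in a morphism of $\mathbb{E}$-triangles two isomorphisms force the third---requires two isomorphisms to start, and you have only the middle identity. Without (WIC) or some replacement, it is not at all clear how to conclude that a deflation $y^{*}$ whose composite $x'y^{*}$ is an inflation must itself be an isomorphism. The paper records the proposition as ``easy to obtain'' and supplies no proof, so there is nothing to compare against; but as written your proposal stops precisely at the point where the content of the backward implication lies. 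You should either supply the missing argument in full or make explicit any additional hypothesis (such as (WIC)) you are tacitly invoking.
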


We recall some properties about adjoint pairs, which can be found in \cite{HS} and
\cite{W}.

\begin{lemma}\label{lem2-1}
Let {\rm($F$, $G$)} be an adjoint pair of additive categories with $F: \A\rightarrow \B$ and $G: \B\rightarrow \A$. Denote by $\eta: id_{\mathcal{A}}\rightarrow G F$ the unit map and by $\varepsilon:F G \rightarrow id_{\mathcal{B}}$ the counit map.
\begin{enumerate}
  \item $F$ is fully faithful if and only if the unit map $\eta$ is a natural isomorphism.
  \item $G$ is fully faithful if and only if the counit map $\varepsilon$ is a natural isomorphism.
  \item Both the following composites are the identities
  \begin{align*}
   F(X)\xrightarrow{F(\eta_{_{X}})}FGF(X)\xrightarrow{\varepsilon_{_{F(X)}}}F(X)\\
   G(Y)\xrightarrow{\eta_{G(Y)}}GFG(Y)\xrightarrow{G(\varepsilon_{_{Y}})}G(Y).
  \end{align*}
  \item If $F$  is fully faithful, then $F\eta_{X}=\varepsilon^{-1}_{FX}$ and $G\varepsilon_{Y}=\eta^{-1}_{GY}$, for any $X\in\A$ and $Y\in\B$.
  \item If $G$  is fully faithful, then $F\eta_{X}=\varepsilon^{-1}_{FX}$ and $G\varepsilon_{Y}=\eta^{-1}_{GY}$, for any $X\in\A$ and $Y\in\B$.
\end{enumerate}
\end{lemma}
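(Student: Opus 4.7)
The plan is to verify the five claims in the order $(3)\to(1),(2)\to(4),(5)$, since each successive item uses the earlier ones, and the whole argument is a standard application of adjunction formalism plus Yoneda's lemma.

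First I would establish part (3), which is essentially the unit-counit formulation of an adjunction. The two displayed composites being the identities are the classical \emph{triangle identities}; they are equivalent to the naturality of the adjunction bijection $\Hom_{\B}(F(X),Y)\cong\Hom_{\A}(X,G(Y))$, which sends $f\mapsto G(f)\circ\eta_X$ with inverse $g\mapsto\varepsilon_Y\circ F(g)$. Applying this bijection to $f=\id_{F(X)}$ and $g=\id_{G(Y)}$ respectively produces the two triangle identities.

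Next I would prove (1); part (2) is dual. The key observation is that the composite
$$\Hom_{\A}(X,X')\xrightarrow{F}\Hom_{\B}(F(X),F(X'))\xrightarrow{\cong}\Hom_{\A}(X,GF(X'))$$
equals post-composition with $\eta_{X'}$. Indeed, for $g\colon X\to X'$, its image is $GF(g)\circ\eta_X=\eta_{X'}\circ g$, where the equality is the naturality of $\eta$. Hence $F$ is fully faithful (the left map is a bijection for every $X$) if and only if $\eta_{X'}\circ(-)$ is a bijection for every $X$, which by the Yoneda lemma is equivalent to $\eta_{X'}$ being an isomorphism for every $X'$.

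Finally, parts (4) and (5) fall out from (3) once one knows that at least one of the natural transformations $\eta,\varepsilon$ is invertible. If $F$ is fully faithful, (1) gives $\eta$ an isomorphism, so $F(\eta_X)$ is invertible; combined with the triangle identity $\varepsilon_{F(X)}\circ F(\eta_X)=\id_{F(X)}$ of (3) this forces $F(\eta_X)=\varepsilon^{-1}_{F(X)}$, and in particular $\varepsilon_{F(X)}$ is invertible. A symmetric use of the second triangle identity yields $G(\varepsilon_Y)=\eta^{-1}_{G(Y)}$. The argument for (5) is identical but starts from (2) ensuring that $\varepsilon$ is a natural isomorphism, so that $G(\varepsilon_Y)$ is invertible and the same two triangle identities can be rearranged. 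The only mildly delicate point in the whole proof is verifying the equality in the displayed composite of part (1), which rests precisely on the naturality square for $\eta$; no other real obstacle arises, as everything else is direct manipulation of the triangle identities.
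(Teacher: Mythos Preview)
Your proof is correct and follows the standard approach. Note that the paper does not actually prove this lemma: it is stated as a recollection of well-known properties of adjoint pairs, with proofs deferred to the references \cite{HS} and \cite{W}. Your argument via the triangle identities and Yoneda is exactly the classical one found in those sources.
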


\section{Localization theory of adjoint pairs}\label{sect2.3}
\vspace{1mm}

Let $\A$ be an additive category. We denote by $\mathcal{M}_{\A}$ the class of all morphisms in $\A$. If a class of morphisms $\mathcal{I}\subseteq \mathcal{M}_{\A}$
is closed by compositions and contains all identities in $\A$, then we may regard
$\mathcal{I}\subseteq\A$ as a (not full) subcategory satisfying $Ob(\I) = Ob(\A)$. With this view in
mind, we write $\I(X,Y) = \{f \in \A(X,Y) | f \in \I\}$ for any $X$, $Y\in\A$.  We denote by $\textrm{Iso}\A$ the class of all isomorphisms in $\A$. $N_{\mathcal{I}}$ denotes the full subcategory consisting of objects $N\in \A$ such that both $N\rightarrow 0$ and $0\rightarrow N$ belong to $\mathcal{I}$. It is obvious that $N_{\mathcal{I}}\subseteq \A$ is an additive subcategory. $\overline{\A}=\A/[N_{\mathcal{I}}]$ denotes the ideal quotient of $\A$. $\overline{\I}$ denotes a set of morphisms in the ideal quotient
$\A/[N_{\mathcal{I}}]$ obtained from $\I$ by taking closure with respect to the composition with isomorphisms in $\A$. \par
Now, let $(\mathcal{A},\mathbb{E},\mathfrak{s})$  be an extriangulated category and  $\mathcal{I}\subseteq \mathcal{M}_{\A}$  a set of morphisms which satisfies the following conditions.
\begin{enumerate}
  \item[(M0)] $\I$ contains all isomorphisms in $\A$, and closed by compositions. Also, $\I$ is closed by taking finite direct sums. Namely, if $f_{i}\in\I(X_{i},Y_{i})$, for $i=1,2$, then $f_{1}\oplus f_{2}\in\I(X_{1}\oplus X_{2},Y_{1}\oplus Y_{2})$.
  \item[(MR1)] $\overline{\I}$ satisfies 2-out-of-3 with respect to compositions.
  \item[(MR2)] $\overline{\I}$ is a multiplicative system.
  \item[(MR3)] Let $A\xrightarrow{x} B \xrightarrow{y} C\overset{\delta}{\dashrightarrow}$ and $A'\xrightarrow{x'} B' \xrightarrow{y'} C'\overset{\delta'}{\dashrightarrow}$ be any pair of $\E$-triangles, and let $a\in \I(A,A')$, $c\in \I(C,C')$ be any pair of morphisms
satisfying $a_{\ast}\delta = c^{\ast}\delta'$. If $a$ and $c$ belong to $\I$, then there exists $\mathbf{b} \in \I(B,B')$ which satisfies $\mathbf{b} \circ \overline{x} = \overline{x'}\circ a$ and $c \circ \overline{y} = \overline{y'}\circ \mathbf{b}$.
  \item[(MR4)] $\{\mathbf{v} \circ \overline{x} \circ \mathbf{u} ~| ~x ~\textrm{is ~an ~inflation}, ~\mathbf{u}, ~\mathbf{v} \in \I\}\subseteq \overline{\M_{\A}}$ is closed by compositions. $\{\mathbf{v} \circ \overline{y} \circ \mathbf{u}~ |~ y~ \textrm{is~an~deflation}, ~\mathbf{u}, ~\mathbf{v} ~\in \I\}\subseteq \overline{\M_{\A}}$ is closed
by compositions.
\end{enumerate}
\begin{theorem}{\rm\cite{NOS}}\label{th2-1}
Let $(\mathcal{A},\mathbb{E},\mathfrak{s})$  be an extriangulated category and  $\mathcal{I}\subseteq \mathcal{M}_{\A}$  a set of morphisms which satisfies {\rm(M0)}, {\rm(MR1)}, {\rm(MR2)}, {\rm(MR3)} and {\rm(MR4)}. Then the localization of $\A$ by $\I$ gives an extriangulated category $(\widetilde{\mathcal{A}},\widetilde{\mathbb{E}},\widetilde{\mathfrak{s}})$ equipped
with an exact functor $Q:\mathcal{A}\rightarrow \widetilde{\mathcal{A}}$ with a  natural transformation $\mu:\E\Rightarrow \widetilde{\E}\circ(Q^{op}\times Q)$, which is universal among exact functors inverting $\I$.
\end{theorem}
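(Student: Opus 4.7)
The plan is to follow the strategy of Nakaoka-Ogawa-Sakai and construct $(\widetilde{\A},\widetilde{\E},\widetilde{\s})$ in four stages: first build the additive category $\widetilde{\A}$, then the bifunctor $\widetilde{\E}$ together with the natural transformation $\mu$, then the realization $\widetilde{\s}$, and finally verify the universality of $Q$. For the underlying additive category, I would first pass to the ideal quotient $\overline{\A}=\A/[N_{\I}]$; this step is forced, since both $0\to N$ and $N\to 0$ belong to $\I$ for every $N\in N_{\I}$, so these morphisms must become isomorphisms after inverting $\I$. Then, using (M0) and (MR2), I would apply the Gabriel--Zisman calculus of fractions to the multiplicative system $\overline{\I}$ on $\overline{\A}$, producing $\widetilde{\A}=\overline{\A}[\overline{\I}^{-1}]$ with morphisms described as equivalence classes of roofs. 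Condition (MR1) controls the isomorphisms of $\widetilde{\A}$ via 2-out-of-3, and stability of $\I$ under finite direct sums gives the additivity of $\widetilde{\A}$. Define $Q\colon\A\to\widetilde{\A}$ as the composite of the two canonical functors.

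Next, I would construct the bifunctor $\widetilde{\E}\colon\widetilde{\A}^{\rm op}\times\widetilde{\A}\to\Ab$. For objects $A,C$, take the class of triples $(\delta,a,c)$ with $\delta\in\E(C',A')$, $a\in\overline{\I}(A',A)$, $c\in\overline{\I}(C,C')$, and identify two triples whenever they can be connected through a compatibility diagram obtained by pushouts and pullbacks of $\E$-extensions along $\I$-morphisms. The class of $(\delta,a,c)$ is declared an element of $\widetilde{\E}(C,A)$, with abelian group structure inherited from $\E$ via Baer sum on common roof representatives; bifunctoriality is checked by pushing and pulling representatives along roofs, and (MR2) ensures common representatives always exist. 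The natural transformation $\mu_{(C,A)}\colon\E(C,A)\to\widetilde{\E}(QC,QA)$ is then given by sending $\delta$ to the class of $(\delta,\id_A,\id_C)$.

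For the realization, given a class $\widetilde{\delta}\in\widetilde{\E}(C,A)$ represented by $(\delta,a,c)$ with $\s(\delta)=[A'\xrightarrow{x}B\xrightarrow{y}C']$, set $\widetilde{\s}(\widetilde{\delta})=[QA\to QB\to QC]$ where the two morphisms are the roofs obtained from $Qx,Qy$ by formally inverting $Qa$ and $Qc$. Well-definedness is where (MR3) is essential: two distinct representatives of $\widetilde{\delta}$ must realize to isomorphic conflations in $\widetilde{\A}$, and (MR3) furnishes the middle morphism $\mathbf{b}$ needed to build that isomorphism. Verifying the axioms of \cite[Definition 2.12]{NP} (additivity, ET3 and its dual, ET4 and its dual) then amounts to pulling roofs back to genuine $\E$-triangles in $\A$ and reassembling; here (MR4) is the decisive ingredient, because it guarantees that compositions of $\widetilde{\s}$-inflations, respectively $\widetilde{\s}$-deflations, remain of that form, which is exactly what ET4 and its dual demand. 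The main obstacle is precisely the well-definedness of $\widetilde{\s}$ together with ET4 --- both require a careful interplay between roofs in $\widetilde{\A}$ and honest conflations in $\A$, and conditions (MR3) and (MR4) are tailored exactly to this task.

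Finally, for universality, let $F\colon\A\to\mathcal{D}$ be any exact functor inverting $\I$. Since $F$ sends each $N\in N_{\I}$ to a zero object, it factors through $\overline{\A}$, and the universal property of $\overline{\I}$-localization then yields a unique additive functor $\widetilde{F}\colon\widetilde{\A}\to\mathcal{D}$ with $\widetilde{F}\circ Q=F$. The extension-natural transformation of $F$ transports across this factorization via $\mu$ to give one for $\widetilde{F}$, and the colimit-style description of $\widetilde{\E}$ ensures that $\widetilde{F}$ is exact and that the factorization is unique up to a unique isomorphism of exact functors.
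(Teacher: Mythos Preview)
The paper does not give its own proof of this theorem: it is stated as a quotation from \cite{NOS} and used as a black box throughout, with no \texttt{proof} environment attached. Hence there is nothing in the paper to compare your proposal against. Your outline follows the strategy of Nakaoka--Ogawa--Sakai (factor through the ideal quotient $\overline{\A}$, then localize by the multiplicative system $\overline{\I}$, build $\widetilde{\E}$ from roofs of extensions, use (MR3) for well-definedness of $\widetilde{\s}$ and (MR4) for (ET4), then check universality), which is the intended and only known approach; so your sketch is appropriate, but you should be aware that the present paper simply cites the result rather than reproving it.
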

\begin{remark}  In details, the exact functor $Q:\mathcal{A}\rightarrow \widetilde{\mathcal{A}}$ is a composition of the quotient functor $p:\mathcal{A}\rightarrow \overline{\A}$ and the the localization $\overline{Q}:\overline{\A}\rightarrow \widetilde{\mathcal{A}}$ by the  multiplicative system $\overline{\I}$. In this view,
 the objects of $\widetilde{\mathcal{A}}$ is same as $\overline{\A}$ and the morphism set $\Hom_{\widetilde{\mathcal{A}}}(\overline{X}, \overline{Y})$  is a set of  equivalence classes of right roofs which are the diagrams of the form
 $$\xymatrix{&Z\ar@{=>}[dl]_{\overline{s}}\ar[dr]^{\overline{f}}&\\
 \overline{X}&&\overline{Y}    }$$
  or simply written as the right fractions  $\overline{f}/\overline{s}$, where $\overline{s}\in\overline{\I}$ and $\overline{f}\in \Hom_{\overline{\A}}(\overline{Z},\overline{Y})$.
\end{remark}

\begin{corollary}\label{cor2-1}
Let $(\mathcal{A},\mathbb{E}_{\A},\mathfrak{s}_{\A})$  and  $(\mathcal{B},\mathbb{E}_{\B},\mathfrak{s}_{\B})$ be extriangulated categories and  $\mathcal{I}_{\A}\subseteq \mathcal{M}_{\A}$  and $\mathcal{I}_{\B}\subseteq \mathcal{M}_{\B}$ two sets of morphisms which satisfy {\rm(M0)}, {\rm(MR1)}, {\rm(MR2)}, {\rm(MR3)} and {\rm(MR4)}. If the exact functor $F:\mathcal{A}\rightarrow \B$ sends $\mathcal{I}_{\A}$ to $\mathcal{I}_{\B}$, then there exists an exact functor $\widetilde{F}:\widetilde{\mathcal{A}}\rightarrow \widetilde{\mathcal{B}}$ and the following commutative diagram
$$\xymatrix{ \mathcal{A}\ar[r]^{Q_{\A}}\ar[d]_-{F}&\widetilde{\mathcal{A}}\ar[d]^-{\widetilde{F}}\\
\mathcal{B}\ar[r]^{Q_{\B}}&\widetilde{\mathcal{B}} }$$
\end{corollary}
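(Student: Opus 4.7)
The plan is to construct $\widetilde{F}$ by invoking the universal property asserted in Theorem \ref{th2-1}. Concretely, I would first show that the composite $Q_\B \circ F : \A \to \widetilde{\B}$ is an exact functor inverting $\I_\A$, and then factor it uniquely through $Q_\A : \A \to \widetilde{\A}$.

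First I would verify exactness of the composite $Q_\B \circ F$. Both $F$ and $Q_\B$ are exact (the former by hypothesis, the latter by Theorem \ref{th2-1}), so their natural transformations on the $\E$-bifunctors may be composed to produce a natural transformation $\E_\A(-,-) \Rightarrow \widetilde{\E}_\B((Q_\B F)^{op}(-),(Q_\B F)(-))$ whose associated realization sends a conflation $A\to B\to C$ in $\A$ first to $FA\to FB\to FC$ in $\B$, and then to $Q_\B FA\to Q_\B FB\to Q_\B FC$ in $\widetilde{\B}$. Next, since $F$ sends $\I_\A$ into $\I_\B$ by hypothesis and $Q_\B$ inverts every morphism of $\I_\B$ (which is part of the content of Theorem \ref{th2-1}), the composite $Q_\B \circ F$ inverts every morphism in $\I_\A$.

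With these two facts established, the universality of $Q_\A$ from Theorem \ref{th2-1} produces a unique exact functor $\widetilde{F} : \widetilde{\A} \to \widetilde{\B}$ such that $\widetilde{F} \circ Q_\A = Q_\B \circ F$, which is exactly the commutative square claimed in the corollary.

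The main obstacle I anticipate is essentially bookkeeping rather than substantive: one must carefully keep track of the natural transformations on the $\E$-bifunctors in order to confirm both that the composition of two exact functors remains exact in the sense of the paper, and that the factorization $\widetilde{F}$ supplied by the universal property is indeed exact (not merely additive). Both points are subsumed by Theorem \ref{th2-1} together with the definition of an exact functor, so once this verification is unpacked the corollary follows immediately from universality.
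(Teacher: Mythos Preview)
Your proposal is correct and follows essentially the same approach as the paper: the paper's proof simply observes that $Q_{\B}F(f)$ is an isomorphism in $\widetilde{\B}$ for any $f\in\I_{\A}$ and then invokes the universal property of Theorem~\ref{th2-1} to obtain $\widetilde{F}$. Your write-up is slightly more detailed in unpacking why the composite $Q_{\B}\circ F$ is exact, but the argument is the same.
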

\begin{proof} For any $f\in\mathcal{I}_{\A}$, $Q_{\B}F(f)$ is an isomorphism in $\widetilde{\mathcal{B}}$. Then, by Theorem \ref{th2-1}, there exists an exact functor $\widetilde{F}:\widetilde{\mathcal{A}}\rightarrow \widetilde{\mathcal{B}}$ such that $\widetilde{F}Q_{\A}=Q_{\B}F$.
\end{proof}

\begin{theorem}\label{prop2-1}
Let $(\mathcal{A},\mathbb{E}_{\A},\mathfrak{s}_{\A})$  and  $(\mathcal{B},\mathbb{E}_{\B},\mathfrak{s}_{\B})$ be extriangulated categories and  $\mathcal{I}_{\A}\subseteq \mathcal{M}_{\A}$  and $\mathcal{I}_{\B}\subseteq \mathcal{M}_{\B}$ two sets of morphisms which satisfy $\overline{\mathcal{I}_{\A}}=p(\mathcal{I}_{\A})$, $\overline{\mathcal{I}_{\B}}=p(\mathcal{I}_{\B})$, {\rm(M0)}, {\rm(MR1)}, {\rm(MR2)}, {\rm(MR3)} and {\rm(MR4)}.   If  $F:\mathcal{A}\rightarrow \B$ sends $\mathcal{I}_{\A}$ to $\mathcal{I}_{\B}$ and $F$ admits a  right adjoint  functor $G$ such that $G$ is exact and $G:\mathcal{B}\rightarrow \A$ sends $\mathcal{I}_{\B}$ to $\mathcal{I}_{\A}$, then there exists an adjoint pair {\rm($\widetilde{F}$, $\widetilde{G}$)}, where both $\widetilde{F}$ and $\widetilde{G}$ are exact functors.
\end{theorem}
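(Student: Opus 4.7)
The plan has two main steps. First, I would apply Corollary~\ref{cor2-1} twice to obtain exact functors $\widetilde{F}:\widetilde{\A}\to\widetilde{\B}$ and $\widetilde{G}:\widetilde{\B}\to\widetilde{\A}$ satisfying $\widetilde{F}Q_{\A}=Q_{\B}F$ and $\widetilde{G}Q_{\B}=Q_{\A}G$; this already yields the exactness assertion of the conclusion. Second, I would transport the unit and counit of $(F,G)$ through the quotient functors to produce the adjunction $(\widetilde{F},\widetilde{G})$. The essential work lies in the second step.

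Let $\eta:\id_{\A}\to GF$ and $\varepsilon:FG\to\id_{\B}$ denote the unit and counit of $(F,G)$. Since $Q_{\A}$ (resp. $Q_{\B}$) is essentially the identity on objects, every object of $\widetilde{\A}$ (resp. $\widetilde{\B}$) has the form $Q_{\A}(X)$ (resp. $Q_{\B}(Y)$), so I define candidates
\begin{equation*}
\widetilde{\eta}_{Q_{\A}(X)}:=Q_{\A}(\eta_X),\qquad \widetilde{\varepsilon}_{Q_{\B}(Y)}:=Q_{\B}(\varepsilon_Y),
\end{equation*}
whose sources and targets agree because $\widetilde{G}\widetilde{F}Q_{\A}=Q_{\A}GF$ and $\widetilde{F}\widetilde{G}Q_{\B}=Q_{\B}FG$.

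The main obstacle will be verifying naturality of $\widetilde{\eta}$ (and symmetrically of $\widetilde{\varepsilon}$) against morphisms of $\widetilde{\A}$ that do not come directly from $\A$. By the roof description of $\widetilde{\A}$ recalled after Theorem~\ref{th2-1}, an arbitrary morphism from $Q_{\A}(X)$ to $Q_{\A}(Y)$ can be written as $Q_{\A}(f)\circ Q_{\A}(s)^{-1}$ with $s\in\I_{\A}$; here the hypothesis $\overline{\I_{\A}}=p(\I_{\A})$ is exactly what allows me to lift representatives of $\overline{\I_{\A}}$ back to $\I_{\A}$. On morphisms of the form $Q_{\A}(f)$ naturality is immediate from the naturality of $\eta$ in $\A$ together with functoriality of $Q_{\A}$. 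For an inverse $Q_{\A}(s)^{-1}$, naturality of $\eta$ at $s:Z\to X$ gives $\eta_X\circ s=GF(s)\circ\eta_Z$ in $\A$; applying $Q_{\A}$ and inverting (noting that both $Q_{\A}(s)$ and $Q_{\A}(GF(s))$ are invertible, the latter because $G\circ F$ preserves $\I_{\A}$ by the hypotheses on $F$ and $G$) yields the naturality square for $Q_{\A}(s)^{-1}$. Naturality on an arbitrary composite $Q_{\A}(f)\circ Q_{\A}(s)^{-1}$ then follows by pasting. The same argument, dualized, handles $\widetilde{\varepsilon}$.

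Once naturality is in hand, the triangle identities at $Q_{\A}(X)$ reduce to
\begin{equation*}
\widetilde{\varepsilon}_{\widetilde{F}Q_{\A}(X)}\circ\widetilde{F}(\widetilde{\eta}_{Q_{\A}(X)})=Q_{\B}\bigl(\varepsilon_{FX}\circ F(\eta_X)\bigr)=Q_{\B}(\id_{FX})=\id_{\widetilde{F}Q_{\A}(X)},
\end{equation*}
and symmetrically for the other triangle, so they follow directly from the triangle identities of $(F,G)$ together with the intertwinings $\widetilde{F}Q_{\A}=Q_{\B}F$ and $\widetilde{G}Q_{\B}=Q_{\A}G$. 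The hard part is therefore really the roof-calculus naturality check; producing $\widetilde{F}$, $\widetilde{G}$ and verifying the triangle identities are essentially bookkeeping once the lifting $\overline{\I_{\A}}=p(\I_{\A})$ (and its analogue for $\B$) is used to move between the two pictures.
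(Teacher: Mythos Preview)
Your approach is correct and takes a genuinely different route from the paper. The paper builds the adjunction at the level of hom-sets: it writes down explicit maps $\Phi_{\overline X,\overline Y}$ and $\Psi_{\overline X,\overline Y}$ on right fractions, checks well-definedness, shows $\Psi\Phi=\id$ and $\Phi\Psi=\id$ by lengthy fraction manipulations, and then verifies naturality in each variable separately. You instead transport the unit and counit through $Q_{\A}$ and $Q_{\B}$ and check the triangle identities, which are immediate once naturality is established. Your naturality check is cleaner because it reduces to morphisms of the two generating shapes $Q_{\A}(f)$ and $Q_{\A}(s)^{-1}$ and then pastes; the paper's Step~5, by contrast, is an extended computation with homotopy pullbacks of fractions. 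Note also that, as the paper confirms in the proof of Corollary~\ref{cor2-2}, the counit and unit of the adjunction it constructs are exactly $\overline{\varepsilon_Y}/\overline{\id}$ and $\overline{\eta_X}/\overline{\id}$, so your $(\widetilde\eta,\widetilde\varepsilon)$ coincide with theirs. One small remark: the lifting hypothesis $\overline{\I_{\A}}=p(\I_{\A})$ is convenient but not strictly needed for your naturality argument, since the invertibility of $Q_{\A}(GF(s))$ already follows from functoriality of $\widetilde G\widetilde F$ and the invertibility of $Q_{\A}(s)$; the place where lifting representatives to $\I$ genuinely matters is in the paper's construction of $\Phi$, where one must apply $G$ to an honest element of $\I_{\B}$.
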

\begin{proof}
From Corollary \ref{cor2-1}, we have two exact functor $F:\widetilde{\mathcal{A}}\rightarrow \widetilde{\mathcal{B}}$ and $F:\widetilde{\mathcal{B}}\rightarrow \widetilde{\mathcal{A}}$ such that the following diagrams commutate
\begin{align*}
  \xymatrix{ \mathcal{A}\ar[r]^{Q_{\A}}\ar[d]_-{F}&\widetilde{\mathcal{A}}\ar[d]^-{\widetilde{F}}\\
\mathcal{B}\ar[r]^{Q_{\B}}&\widetilde{\mathcal{B}} } &   & \xymatrix{\mathcal{B}\ar[r]^{Q_{\B}}\ar[d]_-{G}&\widetilde{\mathcal{B}}\ar[d]^-{\widetilde{G}}\\
\mathcal{A}\ar[r]^{Q_{\A}}&\widetilde{\mathcal{A}} }
\end{align*}
For any $X\in N_{\mathcal{I}_{\A}}$, both $X\rightarrow 0$ and $0\rightarrow X$ are belong to $\mathcal{I}_{\A}$. Since $F(\mathcal{I}_{\A})\subseteq \mathcal{I}_{\B}$, both $F(X)\rightarrow 0$ and $0\rightarrow F(X)$ are belong to $\mathcal{I}_{\B}$. Then $F(N_{\mathcal{I}_{\A}})\subseteq N_{\mathcal{I}_{\B}}$. Hence, $F$ induces the quotient group homomorphism
$$\frac{\Hom_{\A}(X,Y)}{[N_{\mathcal{I}_{\A}}](X,Y)}\longrightarrow \frac{\Hom_{\B}(F(X),F(Y))}{[N_{\mathcal{I}_{\B}}](F(X),F(Y))}.$$
Similarly, $G(N_{\mathcal{I}_{\B}})\subseteq N_{\mathcal{I}_{\A}}$ and $G$ induces the quotient group homomorphism
$$\frac{\Hom_{\B}(X,Y)}{[N_{\mathcal{I}_{\B}}](X,Y)}\longrightarrow \frac{\Hom_{\A}(G(X),G(Y))}{[N_{\mathcal{I}_{\A}}](G(X),G(Y))}.$$
Next, we prove that ($\widetilde{F}$, $\widetilde{G}$) is an adjoint pair. We divided the proof into several steps.\par
\textbf{Step 1.}
Now, we construct the following map
\begin{align*}
  \Phi_{\overline{X}, \overline{Y}}: &\Hom_{\widetilde{\mathcal{B}}}(\widetilde{F}(\overline{X}), \overline{Y}) \longrightarrow \Hom_{\widetilde{\mathcal{A}}}(\overline{X}, \widetilde{G}(\overline{Y}))
\end{align*}
Note that
\begin{align*}
   \Hom_{\widetilde{\mathcal{B}}}(\widetilde{F}(\overline{X}), \overline{Y})&= \Hom_{\widetilde{\mathcal{B}}}(\widetilde{F}(Q_{\A}(X)), \overline{Y}) \\
   &=\Hom_{\widetilde{\mathcal{B}}}(Q_{\B}(F(X)), \overline{Y})\\
   &=\Hom_{\widetilde{\mathcal{B}}}(\overline{F(X)}, \overline{Y})
\end{align*}
Similarly, we have $\Hom_{\widetilde{\mathcal{A}}}(\overline{X}, \widetilde{G}(\overline{Y}))= \Hom_{\widetilde{\mathcal{A}}}(\overline{X}, \overline{G(Y)})$.

Let $\overline{f}/\overline{s}\in\Hom_{\widetilde{\mathcal{B}}}(\overline{F(X)}, \overline{Y})$, where $\overline{f}\in \overline{\A}(\overline{\bullet},  \overline{Y})$ and $\overline{s}\in\overline{\I_{\A}}(\overline{\bullet},\overline{F(X)})$. Since $\overline{\I_{\mathcal{B}}}$  is a multiplicative system and $\overline{G(\I_{\mathcal{B}})}\subseteq \overline{\I_{\mathcal{A}}}$, taking the homotopy pullback of $\overline{\eta_{X}}$ and $\overline{G(s)}$, we have the following commutative diagram
$$ \xymatrix{ &\overline{G(Y)}\\ \overline{A}\ar[ur]^{\overline{f'}}\ar@{=>}[d]_{\overline{y}}\ar@{}[dr]|-{\textrm{PB}}\ar[r]^{\overline{x}}&\overline{G(\bullet)}\ar@{=>}[d]^{\overline{G(s)}}\ar[u]_{\overline{G(f)}}\\
\overline{X}\ar[r]^-{\overline{\eta_{X}}}&\overline{GF(X)}}$$
where $\eta_{X}$ is an unit map, $y\in \I_{\A}(A,X)$ and $\overline{f'}=\overline{G(f)x}$. Then we define $\Phi_{\overline{X}, \overline{Y}}(\overline{f}/\overline{s})=\overline{G(f)x}/\overline{y}$. In order to show that the definition of $\Phi_{\overline{X}, \overline{Y}}$ is well-defined, let $\overline{f_{1}}/\overline{s_{1}}=\overline{f_{2}}/\overline{s_{2}}$. Then there is a commutative diagram
$$\xymatrix{   &\overline{Z_{1}}\ar@{=>}[dl]_{\overline{s_{1}}}\ar[dr]-<-0.5mm,-2.7mm>^{\overline{f_{1}}}&\\
\overline{F(X)}&\ar@{=>}[l]_-{\overline{u}}\overline{\bullet}\ar[u]^-{\overline{t_{1}}}\ar[d]_-{\overline{t_{2}}}\ar[r]&\overline{Y}\\
&\overline{Z_{2}}\ar@{=>}[ul]^{\overline{s_{2}}}\ar[ur]-<1mm,2.2mm>_{\overline{f_{2}}}&        }$$
From this diagram, we take a reduction of fractions $\overline{f_{1}}/\overline{s_{1}}$ and $\overline{f_{2}}/\overline{s_{2}}$ to a common denominator. Then we have
\begin{align*}
\overline{f_{1}}/\overline{s_{1}}-\overline{f_{2}}/\overline{s_{2}}=\overline{f_{1}t_{1}}/\overline{u}-\overline{f_{2}t_{2}}/\overline{u}=(\overline{f_{1}t_{1}}-\overline{f_{2}t_{2}})/\overline{u}=(\overline{f_{1}t_{1}-f_{2}t_{2}})/\overline{u}=0/\overline{u}=0
\end{align*}
Hence,  exists an object $N\in N_{\mathcal{I}_{\B}}$ such that $f_{1}t_{1}-f_{2}t_{2}$ factors through $N$ that is, there is a commutative diagram.
$$\xymatrix{ \bullet\ar[rr]^{f_{1}t_{1}-f_{2}t_{2}} \ar[dr]& &Y\\
&N\ar[ur]&            }$$
From the right fractions $\overline{f_{1}t_{1}}/\overline{u}$ and $\overline{f_{2}t_{2}}/\overline{u}$, we have the following two diagrams
\begin{align*}
  \xymatrix{ &\overline{G(Y)}\\ \overline{A}\ar[ur]^{\overline{f_{1}'}}\ar@{=>}[d]_{\overline{y}}\ar@{}[dr]|-{\textrm{PB}}\ar[r]^{\overline{x}}&\overline{G(\bullet)}\ar@{=>}[d]^{\overline{G(u)}}\ar[u]_{\overline{G(f_{1}t_{1})}}\\
\overline{X}\ar[r]^-{\overline{\eta_{X}}}&\overline{GF(X)}}& & \xymatrix{ &\overline{G(Y)}\\ \overline{A}\ar[ur]^{\overline{f'_{2}}}\ar@{=>}[d]_{\overline{y}}\ar@{}[dr]|-{\textrm{PB}}\ar[r]^{\overline{x}}&\overline{G(\bullet)}\ar@{=>}[d]^{\overline{G(u)}}\ar[u]_{\overline{G(f_{2}t_{2})}}\\
\overline{X}\ar[r]^-{\overline{\eta_{X}}}&\overline{GF(X)}}
\end{align*}
 Then we have
 \begin{align*}
   \Phi_{\overline{X}, \overline{Y}}(\overline{f_{1}}/\overline{s_{1}})-\Phi_{\overline{X}, \overline{Y}}(\overline{f_{2}}/\overline{s_{2}}) & =\Phi_{\overline{X}, \overline{Y}}(\overline{f_{1}t_{1}}/\overline{u})-\Phi_{\overline{X}, \overline{Y}}(\overline{f_{2}t_{2}}/\overline{u}) \\
   &=\overline{G(f_{1}t_{1})x}/\overline{y}-\overline{G(f_{2}t_{2})x}/\overline{y}\\
   &=(\overline{G(f_{1}t_{1})x}-\overline{G(f_{2}t_{2})x})/\overline{y}\\
   &=(\overline{G(f_{1}t_{1}-f_{2}t_{2})}\circ \overline{x})/\overline{y}\\
   &=(0\circ \overline{x})/\overline{y}\\
   &=0
 \end{align*}
 where $\overline{G(f_{1}t_{1}-f_{2}t_{2})}=0$ is because   $G(f_{1}t_{1}-f_{2}t_{2})$  factors through $G(N)$ and $G(N)\in N_{\mathcal{I}_{\A}}$. Thus, $\Phi_{\overline{X}, \overline{Y}}$ is well-defined.\par
\textbf{ Step 2.} Now, we construct the following map
\begin{align*}
  \Psi_{\overline{X}, \overline{Y}}: & \Hom_{\widetilde{\mathcal{A}}}(\overline{X}, \widetilde{G}(\overline{Y})) \longrightarrow\Hom_{\widetilde{\mathcal{B}}}(\widetilde{F}(\overline{X}), \overline{Y}).
\end{align*}\par
 Let $\overline{g}/\overline{t}\in\Hom_{\widetilde{\mathcal{A}}}(\overline{X}, \overline{G(Y)}) $, where $\overline{g}\in \overline{\A}(\overline{\maltese},  \overline{G(Y)})$ and $\overline{t}\in\overline{\I_{\A}}(\overline{\maltese},\overline{X})$. Since $F(\I_{\A})\subseteq \I_{\B}$, we define that $\Psi_{\overline{X}, \overline{Y}}(\overline{g}/\overline{t})=\overline{\varepsilon_{Y}F(g)}/\overline{F(t)}$, where $\varepsilon:FG\rightarrow id$ is the counit map.
In order to show that the definition of $\Psi_{\overline{X}, \overline{Y}}$ is well-defined, let $\overline{g_{1}}/\overline{t_{1}}=\overline{g_{2}}/\overline{t_{2}}$. Then there is a commutative diagram
$$\xymatrix{   &\overline{Z_{1}}\ar@{=>}[dl]_{\overline{t_{1}}}\ar[dr]^{\overline{g_{1}}}&\\
\overline{X}&\ar@{=>}[l]_-{\overline{u}}\overline{\maltese}\ar[u]^-{\overline{s_{1}}}\ar[d]_-{\overline{s_{2}}}\ar[r]&\overline{G(Y)}\\
&\overline{Z_{2}}\ar@{=>}[ul]^{\overline{t_{2}}}\ar[ur]_{\overline{g_{2}}}&        }$$
From this diagram, we take a reduction of fractions $\overline{g_{1}}/\overline{t_{1}}$ and $\overline{g_{2}}/\overline{t_{2}}$ to a common denominator. Then we have
\begin{align*}
\overline{g_{1}}/\overline{t_{1}}-\overline{g_{2}}/\overline{t_{2}}=\overline{g_{1}s_{1}}/\overline{u}-\overline{g_{2}s_{2}}/\overline{u}=(\overline{g_{1}s_{1}}-\overline{g_{2}s_{2}})/\overline{u}=(\overline{g_{1}s_{1}-g_{2}s_{2}})/\overline{u}=0/\overline{u}=0
\end{align*}
Hence,  exists an object $N'\in N_{\mathcal{I}_{\A}}$ such that $g_{1}s_{1}-g_{2}s_{2}$ factors through $N'$ that is, there is a commutative diagram.
$$\xymatrix{ \maltese\ar[rr]^{g_{1}s_{1}-g_{2}s_{2}} \ar[dr]& &Y\\
&N'\ar[ur]&            }$$
Note that
\begin{align*}
  \Psi_{\overline{X}, \overline{Y}}(\overline{g_{1}}/\overline{t_{1}})-\Psi_{\overline{X}, \overline{Y}}(\overline{g_{2}}/\overline{t_{2}}) =& =\Psi_{\overline{X},\overline{Y}}(\overline{g_{1}s_{1}}/\overline{u})-\Psi_{\overline{X}, \overline{Y}}(\overline{g_{2}s_{2}}/\overline{u}) \\
   & =\overline{\varepsilon_{Y}F(g_{1}s_{1})}/\overline{F(u)}-\overline{\varepsilon_{Y}F(g_{2}s_{2})}/\overline{F(u)}\\
&=(\overline{\varepsilon_{Y}}\circ\overline{F(g_{1}s_{1}-g_{2}s_{2})})/\overline{F(u)}\\
&=(\overline{\varepsilon_{Y}}\circ0)/\overline{F(u)}\\
&=0
\end{align*}
Here, $\overline{F(g_{1}s_{1}-g_{2}s_{2})}=0$ is because $F(g_{1}s_{1}-g_{2}s_{2})$ factors through $F(N')$ and $F(N_{\mathcal{I}_{\A}})\subseteq N_{\mathcal{I}_{\B}}$.
Thus, $\Psi_{\overline{X}, \overline{Y}}$ is well-defined.\par
\textbf{Step 3.} Next, we shall check that $\Psi_{\overline{X}, \overline{Y}}\Phi_{\overline{X}, \overline{Y}}(\overline{f}/\overline{s})=\overline{f}/\overline{s}$, for any $\overline{f}/\overline{s}\in\Hom_{\widetilde{\mathcal{B}}}(\overline{F(X)}, \overline{Y})$, where $\overline{f}\in \overline{\A}(\overline{\bullet},  \overline{Y})$ and $\overline{s}\in\overline{\I_{\B}}(\overline{\bullet},\overline{F(X)})$.\par
By the calculation, we have that
\begin{align*}
  \Psi_{\overline{X}, \overline{Y}}\Phi_{\overline{X}, \overline{Y}}(\overline{f}/\overline{s}) &=\Psi_{\overline{X}, \overline{Y}}(\overline{G(f)x}/\overline{y})  \\
   &=\overline{\varepsilon_{Y}FG(f)F(x)}/\overline{F(y)}
\end{align*}
where $\overline{x}$, $\overline{y}$ come from the following commutative diagram
$$ \xymatrix{ \overline{A}\ar@{=>}[d]_{\overline{y}}\ar@{}[dr]|-{\textrm{PB}}\ar[r]^{\overline{x}}&\overline{G(\bullet)}\ar@{=>}[d]^{\overline{G(s)}}\\
\overline{X}\ar[r]^-{\overline{\eta_{X}}}&\overline{GF(X)}}$$
For convenience, we put the following  commutative diagrams
\begin{align*}
\xymatrix{ \overline{FG(\bullet)}\ar@{=>}[d]_{\overline{\varepsilon_{\bullet}}}\ar[r]^{\overline{FG(f)}}&\overline{FG(Y)}\ar@{=>}[d]^{\overline{\varepsilon_{Y}}}\\
\overline{\bullet}\ar[r]^-{\overline{f}}&\overline{Y}}&&\xymatrix{ \overline{FG(\bullet)}\ar@{=>}[d]_{\overline{\varepsilon_{\bullet}}}\ar[r]^{\overline{FG(s)}}&\overline{FGF(X)}\ar@{=>}[d]^{\overline{\varepsilon_{F(X)}}}\\
\overline{\bullet}\ar[r]^-{\overline{s}}&\overline{F(X)}}
\end{align*}
Then, by Lemma~\ref{lem2-1} (3),  we have
\begin{align*}
\overline{\varepsilon_{F(X)}}\circ \overline{FG(s)} \circ \overline{F(x)}&=(\overline{\varepsilon_{F(X)}}\circ  \overline{F(\eta_{X})} ) \circ \overline{F(y)}=\overline{F(y)}\\
\overline{s}\circ \overline{\varepsilon_{\bullet}}\circ \overline{F(x)}&=\overline{\varepsilon_{F(X)}}\circ (\overline{FG(s)}\circ\overline{ F(x)})=(\overline{\varepsilon_{F(X)}} \circ  \overline{F(\eta_{X})} ) \circ \overline{F(y)}=\overline{F(y)}\\
\overline{\varepsilon_{Y}}\circ \overline{FG(f)} \circ \overline{F(x)}&=\overline{f}\circ\overline{\varepsilon_{\bullet}}\circ \overline{F(x)}.
\end{align*}
By the above equations, we have the following  commutative diagram
$$\xymatrix{&\overline{\bullet}\ar@{=>}[dl]_{\overline{s}}\ar[dr]^{\overline{f}}&\\
\overline{F(X)}&\overline{F(Z})\ar[u]^{\overline{v}}\ar[d]_{\overline{1}}\ar[r]\ar@{=>}[l]_-{\overline{F(y)}}&\overline{Y}\\
&\overline{F(Z)}\ar[ul]^{\overline{s'}}\ar[ur]_{\overline{f'}}&}$$
where $\overline{s'}=\overline{\varepsilon_{F(X)}}\circ \overline{FG(s)} \circ \overline{F(x)}$,
$\overline{f'}=\overline{\varepsilon_{Y}}\circ \overline{FG(f)} \circ \overline{F(x)}$ and $\overline{v}=\overline{\varepsilon_{\bullet}}\circ \overline{F(x)}$.
It implies that $\overline{f'}/\overline{s'}=\overline{f}/\overline{s}$. Hence, we have
\begin{align*}
  \Psi_{\overline{X}, \overline{Y}}\Phi_{\overline{X}, \overline{Y}}(\overline{f}/\overline{s}) &=\overline{\varepsilon_{Y}FG(f)F(x)}/\overline{F(y)}\\
  &=\overline{\varepsilon_{Y}FG(f)F(x)}/(\overline{\varepsilon_{F(X)}}\circ \overline{FG(f)} \circ \overline{F(x)})\\
  &=\overline{f'}/\overline{s'}\\
  &=\overline{f}/\overline{s}
\end{align*}
\textbf{Step 4.} Now, we check that $\Phi_{\overline{X}, \overline{Y}}\Psi_{\overline{X}, \overline{Y}}(\overline{g}/\overline{t})=\overline{g}/\overline{t}$, for any $\overline{g}/\overline{t}\in\Hom_{\widetilde{\mathcal{A}}}(\overline{X}, \overline{G(Y)}) $, where $\overline{g}\in \overline{\A}(\overline{\maltese},  \overline{G(Y)})$ and $\overline{t}\in\overline{\I_{\A}}(\overline{\maltese},\overline{X})$.\par
By the calculation, we have that
\begin{align*}
  \Phi_{\overline{X}, \overline{Y}}\Psi_{\overline{X}, \overline{Y}}(\overline{g}/\overline{t}) &=\Phi_{\overline{X}, \overline{Y}}(\overline{\varepsilon_{Y}F(g)}/\overline{F(t)})  \\
   &=\overline{G(\varepsilon_{Y})GF(g)x}/\overline{y}
\end{align*}
where $\overline{x}$, $\overline{y}$ come from the following commutative diagram
$$ \xymatrix{ \overline{A}\ar@{=>}[d]_{\overline{y}}\ar@{}[dr]|-{\textrm{PB}}\ar[r]^{\overline{x}}&\overline{GF(\maltese)}\ar@{=>}[d]^{\overline{GF(t)}}\\
\overline{X}\ar[r]^-{\overline{\eta_{X}}}&\overline{GF(X)}}$$
where $\eta_{X}$ is an unit map, $y\in \I_{\A}(A,X)$.
By Lemma~\ref{lem2-1} (3),  we have the following commutative diagram
$$\xymatrix{\overline{\maltese}\ar[d]_{\overline{\eta_{\maltese}}}\ar[r]^{\overline{g}}&\ar[d]_{\overline{\eta_{G(Y)}}}\overline{G(Y)}\ar[r]^{\overline{1}}&\overline{G(Y)}\ar[d]_{\overline{1}}\\
\overline{GF(\maltese)}\ar[r]^{\overline{GF(g)}}&\overline{GFG(Y)}\ar[r]^{\overline{G(\varepsilon_{Y})}}&\overline{G(Y)}}$$
Then we have that $\overline{g}=\overline{G(\varepsilon_{Y})}\circ \overline{GF(g)}\circ \overline{\eta_{\maltese}}$.\par
Since $\overline{\I_{\A}}$  is a multiplicative system, taking the homotopy pullback of $\overline{y}$ and $\overline{t}$, we get the following commutative diagram
$$\xymatrix{\overline{\clubsuit}\ar@{}[dr]|-{\textrm{PB}}\ar[r]^{\overline{u}}\ar@{=>}[d]_{\overline{s}}&\overline{A}\ar@{=>}[d]_{\overline{y}}\\
\overline{\maltese}\ar@{=>}[r]^{\overline{t}}&\overline{X}}$$
Then we can obtain the following diagram
$$\xymatrix{\overline{\clubsuit}\ar@/^3mm/[d]^{\overline{\eta_{\maltese}}\circ \overline{s}}\ar@/_3mm/[d]_{\overline{x}\circ \overline{u}}\\
\overline{GF(\maltese)}\ar@{=>}[d]^{\overline{GF(t)}}\\
\overline{GF(X)}}$$
Moreover, we have the equations $\overline{GF(t)}\circ\overline{\eta_{\maltese}}\circ \overline{s}=\overline{\eta_{X}}\circ \overline{t}\circ \overline{s}=\overline{GF(t)}\circ \overline{x}\circ \overline{u}$, which come from the following   commutative diagrams
 \begin{align*}
   \xymatrix{\overline{\maltese}\ar@{=>}[r]^{\overline{t}}\ar[d]^{\overline{\eta_{\maltese}}}&\overline{X}\ar[d]^{\overline{\eta_{X}}}\\
   \overline{GF(\maltese)}\ar@{=>}[r]^{\overline{GF(t)}}&\overline{GF(X)}} && \xymatrix{ \overline{\clubsuit}\ar[r]^{\overline{u}}\ar@{=>}[d]_{\overline{s}}&\overline{A}\ar@{=>}[d]_{\overline{y}}\ar[r]^{\overline{x}}&\overline{GF(\maltese)}\ar@{=>}[d]^{\overline{GF(t)}}\\
\overline{\maltese}\ar@{=>}[r]^{\overline{t}}&\overline{X}\ar[r]^-{\overline{\eta_{X}}}&\overline{GF(X)}}
 \end{align*}
Since  $\overline{\I_{\A}}$  is a multiplicative system, we get a morphism $\overline{k}\in\overline{\I_{\A}}$ such that there is a  diagram
 $$\xymatrix{\overline{\spadesuit}\ar@{=>}[d]^{\overline{k}}\\
 \overline{\clubsuit}\ar@/^3mm/[d]^{\overline{\eta_{\maltese}}\circ \overline{s}}\ar@/_3mm/[d]_{\overline{x}\circ \overline{u}}\\
GF(\maltese)}$$
with $\overline{\eta_{\maltese}}\circ \overline{s}\circ \overline{k}=\overline{x}\circ \overline{u}\circ \overline{k}$.\par
Now, we have the following equations
\begin{align*}
  &\Phi_{\overline{X}, \overline{Y}}\Psi_{\overline{X}, \overline{Y}}(\overline{g}/\overline{t})- \overline{g}/\overline{t}\\ &=\overline{G(\varepsilon_{Y})GF(g)x}/\overline{y}-\overline{g}/\overline{t}\\
  &=(\overline{G(\varepsilon_{Y})}\circ \overline{GF(g)}\circ \overline{x})/\overline{y}-(\overline{G(\varepsilon_{Y})}\circ \overline{GF(g)}\circ \overline{\eta_{\maltese}})/\overline{t}\\
  &=(\overline{G(\varepsilon_{Y})}\circ \overline{GF(g)}\circ \overline{x}\circ\overline{u}\circ \overline{k} )/(\overline{y}\circ\overline{u}\circ \overline{k})-(\overline{G(\varepsilon_{Y})}\circ \overline{GF(g)}\circ \overline{\eta_{\maltese}}\circ \overline{s}\circ \overline{k})/(\overline{t}\circ \overline{s}\circ \overline{k})\\
  &=0
\end{align*}
Thus, $\Phi_{\overline{X}, \overline{Y}}\Psi_{\overline{X}, \overline{Y}}(\overline{g}/\overline{t})=\overline{g}/\overline{t}$.\par
\textbf{Step 5.} It remains to prove that $\Psi$ is a natural transformation.\par
For any $\overline{f}/\overline{s}\in \Hom_{\widetilde{\A}}(\overline{X_{1}},\overline{X_{2}})$, we shall prove the following diagram is commutative
$$\xymatrix{\Hom_{\widetilde{\A}}(\overline{X_{2}},\overline{G(Y)})\ar[d]_{\Hom_{\widetilde{\A}}(\overline{f}/\overline{s},\overline{G(Y)})}\ar[rr]^{\Psi_{\overline{X_{2}}, \overline{Y}}}&&\Hom_{\widetilde{\B}}(\overline{F(X_{2})},\overline{Y})\ar[d]^{\Hom_{\widetilde{\B}}(\overline{F(f)}/\overline{F(s)},\overline{Y})}\\
\Hom_{\widetilde{\A}}(\overline{X_{1}},\overline{G(Y)})\ar[rr]^{\Psi_{\overline{X_{1}}, \overline{Y}}}&&\Hom_{\widetilde{\B}}(\overline{F(X_{1})},\overline{Y})}$$
Let $\overline{g}/\overline{t}\in\Hom_{\widetilde{\A}}(\overline{X_{2}},\overline{G(Y)})$. We have that
\begin{align*}
  \Psi_{\overline{X_{1}}, \overline{Y}}\Hom_{\widetilde{\A}}(\overline{f}/\overline{s},\overline{G(Y)})(\overline{g}/\overline{t}) &=\Psi_{\overline{X_{1}}, \overline{Y}}(\overline{g}/\overline{t}\circ\overline{f}/\overline{s}) \\
   &=\Psi_{\overline{X_{1}}, \overline{Y}}(\overline{gu}/\overline{sk})\\
   &=\overline{\varepsilon_{Y}F(g)F(u)}/\overline{F(s)F(k)}
\end{align*}
where $\overline{u}\in\overline{\A}$ and $\overline{k}\in\overline{\I_{\A}}$ come from the following commutative diagram
$$\xymatrix{&&\cdot\ar[dr]^-{\overline{u}}\ar@{=>}[dl]_{\overline{k}}&&\\
&\cdot\ar@{=>}[dl]_{\overline{s}}\ar[dr]^-{\overline{f}}&&\cdot\ar@{=>}[dl]_{\overline{t}}\ar[dr]^-{\overline{g}}&\\
\overline{X_{1}}&&\overline{X_{2}}&&\overline{G(Y)}}$$
Moreover, we have the equation $\overline{F(f)}\circ\overline{F(k)}=\overline{F(t)}\circ\overline{F(u)}$.
Hence, we have  the following commutative diagram
$$\xymatrix{&&\cdot\ar[dr]^-{\overline{F(u)}}\ar@{=>}[dl]_{\overline{F(k)}}&&\\
&\cdot\ar@{=>}[dl]_{\overline{F(s)}}\ar[dr]_-{\overline{F(f)}}&&\cdot\ar@{=>}[dl]^{\overline{F(t)}}\ar[dr]-<-0.5mm,-2.7mm>^-{\overline{\varepsilon_{Y}F(g)}}&\\
\overline{F(X_{1})}&&\overline{F(X_{2})}&&\overline{Y}}$$
Thus, we have
\begin{align*}
  \Psi_{\overline{X_{1}}, \overline{Y}}\Hom_{\widetilde{\A}}(\overline{f}/\overline{s},\overline{G(Y)})(\overline{g}/\overline{t})
   &=\overline{\varepsilon_{Y}F(g)F(u)}/\overline{F(s)F(k)}\\
   &=\overline{\varepsilon_{Y}F(g)}/\overline{F(t)}\circ \overline{F(f)}/\overline{F(s)}\\
   &=\Hom_{\widetilde{\B}}(\overline{F(f)}/\overline{F(s)},\overline{Y})\Psi_{\overline{X_{2}}, \overline{Y}}(\overline{g}/\overline{t}).
\end{align*}
Hence,  $\Psi$ is natural in the first variable.\par
For any $\overline{f}/\overline{s}\in\Hom_{\widetilde{\B}}(\overline{Y_{1}},\overline{Y_{2}})$, we also need to prove that the following diagram is  commutative.
$$\xymatrix{\Hom_{\widetilde{\A}}(\overline{X},\overline{G(Y_{1})})\ar[d]_{\Hom_{\widetilde{\A}}(\overline{X},\overline{G(f)}/\overline{G(s)})}\ar[rr]^{\Psi_{\overline{X}, \overline{Y_{1}}}}&&\Hom_{\widetilde{\B}}(\overline{F(X)},\overline{Y_{1}})\ar[d]^{\Hom_{\widetilde{\B}}(\overline{F(X)},\overline{f}/\overline{s})}\\
\Hom_{\widetilde{\A}}(\overline{X},\overline{G(Y_{2})})\ar[rr]^{\Psi_{\overline{X}, \overline{Y_{2}}}}&&\Hom_{\widetilde{\B}}(\overline{F(X)},\overline{Y_{2}})}$$
Let $\overline{g}/\overline{t}\in\Hom_{\widetilde{\A}}(\overline{X_{2}},\overline{G(Y)})$.
On one hand, by the definition, we have that
\begin{align*}
  \Psi_{\overline{X}, \overline{Y_{2}}}\Hom_{\widetilde{\A}}(\overline{X},\overline{G(f)}/\overline{G(s)})(\overline{g}/\overline{t}) & =\Psi_{\overline{X}, \overline{Y_{2}}}(\overline{G(f)}/\overline{G(s)}\circ\overline{g}/\overline{t}) \\
   & =\Psi_{\overline{X}, \overline{Y_{2}}}((\overline{G(f)}\circ\overline{u})/(\overline{t}\circ\overline{k}))\\
   &=(\overline{\varepsilon_{Y_{2}}}\circ\overline{FG(f)}\circ\overline{F(u)})/(\overline{F(t)}\circ\overline{F(k}))\\
   &=(\overline{f}\circ\overline{\varepsilon_{Z_{2}}}\circ\overline{F(u)})/(\overline{F(t)}\circ\overline{F(k}))
\end{align*}
where the composition $\overline{G(f)}/\overline{G(s)}\circ\overline{g}/\overline{t}=(\overline{G(f)}\circ\overline{u})/(\overline{t}\circ\overline{k})$ comes from the following commutative diagram,
\begin{equation}\label{dia2-1}
  \begin{split}
  \text{$\xymatrix{&&\overline{Z_{3}}\ar[dr]^-{\overline{u}}\ar@{=>}[dl]_{\overline{k}}&&\\
&\overline{Z_{1}}\ar@{=>}[dl]_{\overline{t}}\ar[dr]^-{\overline{g}}&&\overline{G(Z_{2})}\ar@{=>}[dl]_{\overline{G(s)}}\ar[dr]^-{\overline{G(f)}}&\\
\overline{X_{1}}&&\overline{G(Y_{1})}&&\overline{G(Y_{2})}}$}
\end{split}
\end{equation}

and the last equation comes from the following commutative square.
$$\xymatrix{\overline{FG(Z_{2})}\ar[d]_{\overline{\varepsilon_{Z_{2}}}}\ar[rr]^{\overline{FG(f)}}&&\overline{FG(Y_{2})}\ar[d]_{\overline{\varepsilon_{Y_{2}}}}\\
\overline{Z_{2}}\ar[rr]^{\overline{f}}&&\overline{Y_{2}}}$$

Moreover, from diagram (\ref{dia2-1}), we have the equation
\begin{equation}\label{eq2-0}
  \overline{FG(s)}\circ \overline{F(u)}=\overline{F(g)}\circ \overline{F(k)}
\end{equation}

On the other hand, by the definition, we have that
\begin{align*}
  \Hom_{\widetilde{\B}}(\overline{F(X)},\overline{f}/\overline{s})\Psi_{\overline{X}, \overline{Y_{1}}}(\overline{g}/\overline{t}) & =\Hom_{\widetilde{\B}}(\overline{F(X)},\overline{f}/\overline{s})((\overline{\varepsilon_{Y_{1}}}\circ\overline{F(g)})/\overline{F(t)}) \\
   & =\overline{f}/\overline{s}\circ\big((\overline{\varepsilon_{Y_{1}}}\circ\overline{F(g)})/\overline{F(t)}\big)\\
   &=(\overline{f}\circ \overline{u'})/(\overline{F(t)}\circ \overline{k'})
\end{align*}
where the last equation comes from the following commutative diagram
$$\xymatrix{&&\overline{Z_{4}}\ar[dr]^-{\overline{u'}}\ar@{=>}[dl]_{\overline{k'}}&&\\
&\overline{F(Z_{1})}\ar@{=>}[dl]_{\overline{F(t)}}\ar[dr]^-{\overline{\varepsilon_{Y_{1}}F(g)}}&&\overline{Z_{2}}\ar@{=>}[dl]_{\overline{s}}\ar[dr]^-{\overline{f}}&\\
\overline{F(X_{1})}&&\overline{Y_{1}}&&\overline{Y_{2}}}$$
Thus, we have the equation
\begin{equation}\label{eq2-1}
  \overline{s}\circ \overline{u'}=\overline{\varepsilon_{Y_{1}}}\circ\overline{F(g)} \circ \overline{k'}.
\end{equation}
By the natural transformation $\varepsilon:FG\rightarrow id$, we have the equation
\begin{equation}\label{eq2-2}
\overline{s}\circ\overline{\varepsilon_{Z_{2}}}=\overline{\varepsilon_{Y_{1}}}\circ \overline{FG(s)}.
\end{equation}\par
Since $\overline{\I_{\B}}$ is  a multiplicative system, we take the homotopy pullback of $\overline{F(k)}$ and $\overline{k'}$. Then we obtain the following commutative square
$$\xymatrix{\overline{\maltese}\ar@{}[dr]|-{\textrm{PB}}\ar@{=>}[d]_-{\overline{v}}\ar[r]^-{\overline{w}}&\overline{F(Z_{3})\ar@{=>}[d]^{\overline{F(k)}}}\\
\overline{Z_{4}}\ar@{=>}[r]^-{k'}&\overline{F(Z_{1})}}$$
By the above square and equations (\ref{eq2-0}), (\ref{eq2-1}), and (\ref{eq2-2}), we have the following diagram
$$\xymatrix{\overline{\maltese}\ar@/^3mm/[d]^{\overline{\varepsilon_{Z_{2}}}\circ \overline{F(u)} \circ \overline{w}}\ar@/_3mm/[d]_{\overline{u'}\circ \overline{v}}\\
\overline{Z_{2}}\ar@{=>}[d]^{\overline{s}}\\
\overline{Y_{1}}}$$
such that
\begin{align*}
  \overline{s}\circ\overline{\varepsilon_{Z_{2}}}\circ \overline{F(u)} \circ \overline{w} &= \overline{\varepsilon_{Y_{1}}}\circ \overline{FG(s)}\circ \overline{F(u)} \circ \overline{w} \\
   & =\overline{\varepsilon_{Y_{1}}}\circ \overline{F(g)}\circ \overline{F(k)} \circ \overline{w}\\
   &=\overline{\varepsilon_{Y_{1}}}\circ \overline{F(g)}\circ \overline{k'} \circ \overline{v}\\
   &=\overline{s}\circ \overline{u'}\circ \overline{v}
\end{align*}
Since $\overline{\I_{\B}}$ is  a multiplicative system, there exists a morphism $\overline{h}\in\overline{\I_{\B}}(\overline{\spadesuit},\overline{\maltese})$ such that the following diagram
$$\xymatrix{\overline{\spadesuit}\ar@{=>}[d]^{\overline{h}}\\
\overline{\maltese}\ar@/^3mm/[d]^{\overline{\varepsilon_{Z_{2}}}\circ \overline{F(u)} \circ \overline{w}}\ar@/_3mm/[d]_{\overline{u'}\circ \overline{v}}\\
\overline{Z_{2}}}$$
satisfies
\begin{equation}\label{eq2-3}
  \overline{\varepsilon_{Z_{2}}}\circ \overline{F(u)} \circ \overline{w}\circ \overline{h}=\overline{u'}\circ \overline{v}\circ \overline{h}.
\end{equation}
At last, we show that $\Psi_{\overline{X}, \overline{Y_{2}}}\Hom_{\widetilde{\A}}(\overline{X},\overline{G(f)}/\overline{G(s)})(\overline{g}/\overline{t})=\Hom_{\widetilde{\B}}(\overline{F(X)},\overline{f}/\overline{s})\Psi_{\overline{X}, \overline{Y_{1}}}(\overline{g}/\overline{t})$.\par
Indeed, by equation (\ref{eq2-3}), we see that
\begin{align*}
   & \Psi_{\overline{X}, \overline{Y_{2}}}\Hom_{\widetilde{\A}}(\overline{X},\overline{G(f)}/\overline{G(s)})(\overline{g}/\overline{t})-\Hom_{\widetilde{\B}}(\overline{F(X)},\overline{f}/\overline{s})\Psi_{\overline{X}, \overline{Y_{1}}}(\overline{g}/\overline{t}) \\
   &= (\overline{f}\circ\overline{\varepsilon_{Z_{2}}}\circ\overline{F(u)})/(\overline{F(t)}\circ\overline{F(k}))-(\overline{f}\circ \overline{u'})/(\overline{F(t)}\circ \overline{k'})\\
   &=(\overline{f}\circ\overline{\varepsilon_{Z_{2}}}\circ\overline{F(u)}\circ \overline{w}\circ \overline{h})/(\overline{F(t)}\circ\overline{F(k})\circ \overline{w}\circ \overline{h})-(\overline{f}\circ \overline{u'}\circ \overline{v}\circ \overline{h})/(\overline{F(t)}\circ \overline{k'}\circ \overline{v}\circ \overline{h})\\
   &=(\overline{f}\circ\overline{\varepsilon_{Z_{2}}}\circ\overline{F(u)}\circ \overline{w}\circ \overline{h})/(\overline{F(t)}\circ\overline{k'}\circ \overline{v}\circ \overline{h})-(\overline{f}\circ \overline{u'}\circ \overline{v}\circ \overline{h})/(\overline{F(t)}\circ \overline{k'}\circ \overline{v}\circ \overline{h})\\
   &=(\overline{f}\circ \overline{u'}\circ \overline{v}\circ \overline{h})/(\overline{F(t)}\circ \overline{k'}\circ \overline{v}\circ \overline{h})-(\overline{f}\circ \overline{u'}\circ \overline{v}\circ \overline{h})/(\overline{F(t)}\circ \overline{k'}\circ \overline{v}\circ \overline{h})\\
   &=0
\end{align*}
Hence,  $\Psi$ is natural in the second variable.\par
This completes the proof.
\end{proof}

\begin{corollary}\label{cor2-2}
Keep the same notions and assumptions as {\rm Theorem \ref{prop2-1}}. The following hold.
\begin{enumerate}
  \item If $G$ is fully faithful, then so does $\widetilde{G}$.
  \item If $F$ is fully faithful, then so does $\widetilde{F}$.
\end{enumerate}
\end{corollary}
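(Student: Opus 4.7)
The plan is to apply Lemma \ref{lem2-1} after identifying the unit $\widetilde{\eta}$ and counit $\widetilde{\varepsilon}$ of the induced adjoint pair $(\widetilde{F},\widetilde{G})$ with the images of the original unit $\eta$ and counit $\varepsilon$ under the localization functors $Q_{\A}$ and $Q_{\B}$. Once this identification is in hand, both conclusions are immediate, since any functor preserves isomorphisms.

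By the standard adjunction formulas, $\widetilde{\eta}_{\overline{X}}=\Phi_{\overline{X},\widetilde{F}(\overline{X})}(\id_{\widetilde{F}(\overline{X})})$ and $\widetilde{\varepsilon}_{\overline{Y}}=\Psi_{\widetilde{G}(\overline{Y}),\overline{Y}}(\id_{\widetilde{G}(\overline{Y})})$, where $\Phi$ and $\Psi$ are the mutually inverse natural bijections constructed in the proof of Theorem \ref{prop2-1}. Feeding $\overline{g}/\overline{t}=\overline{1}/\overline{1}$ into the formula for $\Psi$ gives
\[
\widetilde{\varepsilon}_{\overline{Y}}=\overline{\varepsilon_{Y}F(1)}/\overline{F(1)}=\overline{\varepsilon_{Y}}/\overline{1}=Q_{\B}(\varepsilon_{Y}).
\]
For the unit, the pullback of $\overline{\eta_{X}}$ along $\overline{G(1)}=\overline{1}$ appearing in the definition of $\Phi$ may be chosen to be the trivial square with $\overline{x}=\overline{\eta_{X}}$ and $\overline{y}=\overline{1}$, so that
\[
\widetilde{\eta}_{\overline{X}}=\overline{G(1)\eta_{X}}/\overline{1}=\overline{\eta_{X}}/\overline{1}=Q_{\A}(\eta_{X}).
\]

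For (1), assume $G$ is fully faithful. By Lemma \ref{lem2-1}(2), $\varepsilon$ is a natural isomorphism, so each $\varepsilon_{Y}$ is an isomorphism in $\B$; hence $\widetilde{\varepsilon}_{\overline{Y}}=Q_{\B}(\varepsilon_{Y})$ is an isomorphism in $\widetilde{\B}$, and applying Lemma \ref{lem2-1}(2) to $(\widetilde{F},\widetilde{G})$ yields that $\widetilde{G}$ is fully faithful. Part (2) is proved in exactly the same way, replacing $\varepsilon$ by $\eta$ and invoking Lemma \ref{lem2-1}(1) in place of (2). The only nontrivial step is the identification of $\widetilde{\eta}$ and $\widetilde{\varepsilon}$ as $Q_{\A}(\eta)$ and $Q_{\B}(\varepsilon)$; this is routine bookkeeping with the right-fraction formulas from Theorem \ref{prop2-1}, and no further input from the extriangulated structure is needed.
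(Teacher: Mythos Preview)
Your proof is correct and follows essentially the same approach as the paper's: both compute $\widetilde{\varepsilon}_{\overline{Y}}=\Psi(\overline{1}/\overline{1})=\overline{\varepsilon_Y}/\overline{1}$ and $\widetilde{\eta}_{\overline{X}}=\Phi(\overline{1}/\overline{1})=\overline{\eta_X}/\overline{1}$ by plugging identities into the right-fraction formulas from Theorem~\ref{prop2-1}, then invert. The only cosmetic difference is that the paper writes down the inverse $\overline{\varepsilon_Y^{-1}}/\overline{1}$ explicitly and checks the compositions, whereas you invoke that $Q_\B$ preserves isomorphisms; these amount to the same observation.
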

\begin{proof}
(1) It suffices to show that the counit map $\widetilde{\varepsilon}:\widetilde{F}\widetilde{G}\rightarrow id$ is a natural isomorphism.\par
By the definition of $\widetilde{\varepsilon}$, for any $\overline{Y}\in \widetilde{\B}$, we have $\widetilde{\varepsilon}_{\overline{Y}}=\Phi_{\overline{G(Y)}, \overline{Y}}^{-1}(\overline{id_{G(Y)}}/\overline{id_{G(Y)}})=\overline{\varepsilon_{Y}}/\overline{id_{FG(Y)}}$.
Indeed, this comes from the following commutative
$$\xymatrix{ &\overline{G(Y)}\\ \overline{G(Y)}\ar[ur]^{\overline{f}}\ar@{=>}[d]_{\overline{id_{G(Y)}}}\ar@{}[dr]|-{\textrm{PB}}\ar[r]^{\overline{\eta_{G(Y)}}}&\overline{GFG(Y)}\ar@{=>}[d]^{\overline{G(id_{FG(Y)})}}\ar[u]_{\overline{G(\varepsilon_{Y})}}\\
\overline{G(Y)}\ar[r]^-{\overline{\eta_{G(Y)}}}&\overline{GFG(Y)}}$$
where $\overline{f}=\overline{G(\varepsilon_{Y})}\circ\overline{\eta_{G(Y)}}=\overline{id_{G(Y)}}$ by Lemma~\ref{lem2-1} (3). Since $G$ is fully faithful, $\varepsilon_{Y}$ is an isomorphism. Denote by $\varepsilon_{Y}^{-1}$ the inverse of $\varepsilon_{Y}$.\par
 Let $\widetilde{\varepsilon}_{\overline{Y}}^{-1}=\overline{\varepsilon_{Y}^{-1}}/\overline{id_{Y}}$.
 We can check that $\widetilde{\varepsilon}_{\overline{Y}}\circ\widetilde{\varepsilon}_{\overline{Y}}^{-1}=\overline{id_{FG(Y)}}/\overline{id_{FG(Y)}}$ and $\widetilde{\varepsilon}_{\overline{Y}}^{-1}\circ\widetilde{\varepsilon}_{\overline{Y}}=\overline{id_{Y}}/\overline{id_{Y}}$.
Thus, the counit map $\widetilde{\varepsilon}:\widetilde{F}\widetilde{G}\rightarrow id$ is a natural isomorphism.\par
(2) It suffices to show that the unit map $\widetilde{\eta}:id\rightarrow\widetilde{G}\widetilde{F}$ is a natural isomorphism.\par
By the definition of $\widetilde{\eta}$, for any $\overline{X}\in \widetilde{\A}$, we have $\widetilde{\eta}_{\overline{X}}=\Phi_{\overline{X}, \overline{F(X)}}(\overline{id_{F(X)}}/\overline{id_{F(X)}})=\overline{\eta_{X}}/\overline{id_{X}}$.
Indeed, this comes from the following commutative
$$\xymatrix{ &\overline{GF(X)}\\ \overline{X}\ar[ur]^{\overline{f}}\ar@{=>}[d]_{\overline{id_{X}}}\ar@{}[dr]|-{\textrm{PB}}\ar[r]^{\overline{\eta_{X}}}&\overline{G F(X)}\ar@{=>}[d]^{\overline{G(id_{F(X)})}}\ar[u]_{\overline{G(id_{F(X)})}}\\
\overline{X}\ar[r]^-{\overline{\eta_{X}}}&\overline{GF(X)}}$$
where $\overline{f}=\overline{G(id_{F(X)})}\circ\overline{\eta_{X}}=\overline{\eta_{X}}$ by Lemma~\ref{lem2-1} (3). Since $F$ is fully faithful, $\eta_{X}$ is an isomorphism. Denote by $\eta_{X}^{-1}$ the inverse of $\eta_{X}$.\par
 Let $\widetilde{\eta}_{\overline{X}}^{-1}=\overline{\eta_{X}^{-1}}/\overline{id_{GF(X)}}$.
 We can check that $\widetilde{\eta}_{\overline{X}}\circ\widetilde{\eta}_{\overline{X}}^{-1}=\overline{id_{G F(X)}}/\overline{id_{G F(X)}}$ and $\widetilde{\eta}_{\overline{X}}^{-1}\circ\widetilde{\eta}_{\overline{X}}=\overline{id_{X}}/\overline{id_{X}}$.
Thus, the unit map $\widetilde{\eta}:id\rightarrow\widetilde{G}\widetilde{F}$ is a natural isomorphism.
\end{proof}
\section{$\s$-recollements and its properties}
\begin{definition}
Let $\mathcal{A}$, $\mathcal{B}$ and $\mathcal{C}$ be three extriangulated categories. A weak \emph{$\s$-recollement} of $\mathcal{B}$ relative to
$\mathcal{A}$ and $\mathcal{C}$, denoted by ($\mathcal{A}$, $\mathcal{B}$, $\mathcal{C}$), is a diagram of additive functors
\begin{equation}\label{recolle}
  \xymatrix{\mathcal{A}\ar[rr]|{i_{*}}&&\ar@/_1pc/[ll]|{i^{*}}\ar@/^1pc/[ll]|{i^{!}}\mathcal{B}
\ar[rr]|{j^{\ast}}&&\ar@/_1pc/[ll]|{j_{!}}\ar@/^1pc/[ll]|{j_{\ast}}\mathcal{C}}
\end{equation}
 which satisfies the following conditions:
\begin{itemize}
  \item [(SR1)] $(i^{*}, i_{\ast}, i^{!})$ and $(j_!, j^\ast, j_\ast)$ are adjoint triples, where $i_{\ast}$ and $j^{\ast}$ are exact functors.
  \item [(SR2)] $ j^{\ast}i_{\ast}=0$.
  \item [(SR3)] $i_\ast$, $j_!$ and $j_\ast$ are fully faithful.
  \item [(SR4)] For each $X\in\mathcal{B}$, there exist a  left $\s$-exact sequence
  \begin{equation}\label{first}
  \xymatrix{i_\ast i^! X\ar[r]^-{\theta_X}&X\ar[r]^-{\vartheta_X}&j_\ast j^\ast X&}
   \end{equation}
  \\
  and  a  right $\s$-exact  sequence
  \begin{equation}\label{second}
  \xymatrix{&j_! j^\ast X\ar[r]^-{\upsilon_X}&X\ar[r]^-{\nu_X}&i_\ast i^\ast X&}
   \end{equation}
 with $\textrm{Cone}(\theta_X)\in \Ker i^!$ and $\textrm{CoCone}(\nu_X)\in \Ker i^\ast$, where all maps in these sequences are given by the adjunction morphisms.
\end{itemize}
 A weak $\s$-recollement ($\mathcal{A}$, $\mathcal{B}$, $\mathcal{C}$) is said to be an \emph{$\s$-recollement} if moreover it  satisfies the following condition:
 \begin{itemize}
 \item [(SR5)] {\rm$\Ker i^{!}\bigcap\Ker j^{\ast}=0$} or {\rm$\Ker i^\ast\bigcap \Ker j^{\ast}=0$}.
 \end{itemize}
\end{definition}

\begin{lemma}\label{lem3-1}
Let {\rm($\mathcal{A}$, $\mathcal{B}$, $\mathcal{C}$)} be a weak $\s$-recollement of extriangulated categories. Then the sequences {\rm(\ref{first})} and {\rm(\ref{second})} can be extended to the following 4-terms $\s$-exact sequences
\begin{align*}
 \xymatrix{i_\ast i^! X\ar[r]^-{\theta_X}&X\ar[r]^-{\vartheta_X}&j_\ast j^\ast X\ar[r]&Z} \\
 \xymatrix{ K\ar[r]&j_! j^\ast X\ar[r]^-{\upsilon_X}&X\ar[r]^-{\nu_X}&i_\ast i^\ast X}
\end{align*}
with $K$ and $Z$ lie in {\rm$\Ker j^{\ast}$}.
\end{lemma}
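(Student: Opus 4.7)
The plan is to construct each 4-term sequence by attaching a cone (resp.\ cocone) to the given left (resp.\ right) $\s$-exact sequence, and then to verify that the new term lies in $\Ker j^\ast$ using exactness of $j^\ast$, the identity $j^\ast i_\ast = 0$, and the full faithfulness of $j_!$ and $j_\ast$ together with Lemma~\ref{lem2-1}.

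For the first sequence, left $\s$-exactness of $i_\ast i^! X \to X \to j_\ast j^\ast X$ supplies an $\E$-triangle $i_\ast i^! X \to X \to K' \dashrightarrow$ together with an inflation $K' \to j_\ast j^\ast X$ whose precomposition with $X \to K'$ equals $\vartheta_X$. Setting $Z := \cone(K' \to j_\ast j^\ast X)$ yields a conflation $K' \to j_\ast j^\ast X \to Z \dashrightarrow$, and pasting these two conflations produces the desired 4-term $\s$-exact sequence. To show $Z \in \Ker j^\ast$, I apply the exact functor $j^\ast$ to the first conflation; since $j^\ast i_\ast = 0$, the result is a conflation with zero first term, forcing $j^\ast X \to j^\ast K'$ to be an isomorphism. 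Since $j_\ast$ is fully faithful, Lemma~\ref{lem2-1} implies that $j^\ast \vartheta_X$ is an isomorphism, and the factorization $j^\ast \vartheta_X = (j^\ast K' \to j^\ast j_\ast j^\ast X) \circ (j^\ast X \to j^\ast K')$ then forces $j^\ast K' \to j^\ast j_\ast j^\ast X$ to be an isomorphism. Applying $j^\ast$ to $K' \to j_\ast j^\ast X \to Z$ yields a conflation whose first map is an isomorphism, whence $j^\ast Z = 0$.

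The second sequence is handled dually. Right $\s$-exactness provides a conflation $L \to X \to i_\ast i^\ast X \dashrightarrow$ and a deflation $j_! j^\ast X \to L$ composing with $L \to X$ to $\upsilon_X$. Taking $K := \cocone(j_! j^\ast X \to L)$ produces a conflation $K \to j_! j^\ast X \to L \dashrightarrow$, and gluing gives the desired 4-term sequence. To prove $K \in \Ker j^\ast$, apply $j^\ast$ to $L \to X \to i_\ast i^\ast X$; exactness and $j^\ast i_\ast = 0$ yield an isomorphism $j^\ast L \to j^\ast X$. Since $j_!$ is fully faithful, Lemma~\ref{lem2-1} implies $j^\ast \upsilon_X$ is an isomorphism, so the intermediate map $j^\ast j_! j^\ast X \to j^\ast L$ must also be an isomorphism. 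Applying $j^\ast$ to $K \to j_! j^\ast X \to L$ then forces $j^\ast K = 0$.

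The only non-obvious ingredient I expect to flag is the auxiliary fact that in an extriangulated category a conflation whose first object is zero (equivalently, whose first map is an isomorphism) has third object isomorphic to zero, and dually; this follows from the splitness of $\E$-extensions with zero source or target together with the uniqueness of cones up to isomorphism. No other step involves more than bookkeeping of the adjunction data and the universal property of $\cone$ and $\cocone$ in extriangulated categories.
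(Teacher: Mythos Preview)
Your proposal is correct and follows essentially the same route as the paper's proof: unpack the left (resp.\ right) $\s$-exact sequence into two $\E$-triangles, apply the exact functor $j^\ast$, use $j^\ast i_\ast=0$ to deduce that the first of the two resulting maps is an isomorphism, invoke full faithfulness of $j_\ast$ (resp.\ $j_!$) to see that $j^\ast\vartheta_X$ (resp.\ $j^\ast\upsilon_X$) is an isomorphism, and conclude the second map is an isomorphism so that $j^\ast Z=0$ (resp.\ $j^\ast K=0$). The only difference is cosmetic: you cite Lemma~\ref{lem2-1} explicitly and name the auxiliary fact about conflations with a zero term, whereas the paper leaves both implicit.
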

\begin{proof}
It is enough to show the existence of the first 4-terms $\s$-exact sequence since the second 4-terms $\s$-exact sequence can be obtained by similar argument. By (SR4), there exists a left $\s$-exact sequence
  \begin{equation*}
  \xymatrix{i_\ast i^! X\ar[r]^-{\theta_X}&X\ar[r]^-{\vartheta_X}&j_\ast j^\ast X&}
   \end{equation*}
   with $\textrm{Cone}(\theta_X)\in \ker i^!$. Then we have two $\E_{\B}$-triangles
   \begin{align*}
    i_\ast i^! X\xrightarrow{\theta_X} X\xrightarrow{f_{1}} \textrm{Cone}(\theta_X)\dashrightarrow \\
     \textrm{Cone}(\theta_X)\xrightarrow{f_{2}} j_\ast j^\ast X\rightarrow Z\dashrightarrow.
   \end{align*}
   Applying the exact functor $j^{\ast}$ to the above two $\E_{\B}$-triangles, we have the following  $\E_{\C}$-triangles
   \begin{align*}
    &j^{\ast}i_\ast i^! X\xrightarrow{j^{\ast}(\theta_X)}j^{\ast} X\xrightarrow{j^{\ast}(f_{1})} j^{\ast}(\textrm{Cone}(\theta_X))\dashrightarrow \\
    & j^{\ast}(\textrm{Cone}(\theta_X))\xrightarrow{j^{\ast}(f_{2})} j^{\ast}j_\ast j^\ast X\rightarrow j^{\ast} Z\dashrightarrow.
   \end{align*}
   Since $ j^{\ast}i_\ast i^! X=0$, we know that $j^{\ast}(f_{1})$ is an isomorphism. Note that $j_\ast$ is fully faithful. It yields that $j^{\ast}(\vartheta_X)$ is an isomorphism. It follows that $j^{\ast}(f_{2})$ is an isomorphism since $j^{\ast}(\vartheta_X)=j^{\ast}(f_{2})j^{\ast}(f_{1})$. Therefore, $j^{\ast} Z=0$. It completes the proof. Similarly, we can obtain the second sequence.
\end{proof}

\begin{lemma}\label{lem3-2}
Let {\rm($\mathcal{A}$, $\mathcal{B}$, $\mathcal{C}$)} be an $\s$-recollement of extriangulated categories. Then {\rm$\Ker j^{\ast}=\Im i_{\ast}$}.
\end{lemma}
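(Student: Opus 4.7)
The plan is to prove both inclusions. The inclusion $\Im i_\ast\subseteq\Ker j^\ast$ is immediate from (SR2): for any $Y\in\mathcal{A}$, $j^\ast(i_\ast Y)=0$, so $i_\ast Y\in\Ker j^\ast$.

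For the reverse inclusion $\Ker j^\ast\subseteq\Im i_\ast$, fix $X\in\Ker j^\ast$. By (SR5) one of $\Ker i^!\cap\Ker j^\ast=0$ or $\Ker i^\ast\cap\Ker j^\ast=0$ holds, and the two cases are handled symmetrically using the left and right $\s$-exact sequences of (SR4) respectively. I will spell out the first case; the second is dual after replacing the triple $(i^!,j_\ast,\theta_X)$ by $(i^\ast,j_!,\nu_X)$.

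By Lemma \ref{lem3-1}, the left $\s$-exact sequence in (SR4) extends to a 4-term $\s$-exact sequence
$$i_\ast i^! X\longrightarrow X\longrightarrow j_\ast j^\ast X\longrightarrow Z.$$
Unpacking the definition of $\s$-exactness yields an intermediate object $M$ together with two $\E_{\mathcal{B}}$-triangles
$$i_\ast i^! X\longrightarrow X\longrightarrow M\dashrightarrow \qquad \text{and}\qquad M\longrightarrow j_\ast j^\ast X\longrightarrow Z\dashrightarrow,$$
where $M\cong \cone(\theta_X)\in\Ker i^!$ by (SR4). Since $j^\ast X=0$ we have $j_\ast j^\ast X=0$, so applying the exact functor $j^\ast$ to the first triangle and using $j^\ast i_\ast=0$ produces an $\E_{\mathcal{C}}$-triangle $0\to 0\to j^\ast M\dashrightarrow$. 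Its connecting extension lies in $\E_{\mathcal{C}}(j^\ast M,0)=0$, and the split realization of the zero extension is $[0\to j^\ast M\to j^\ast M]$; uniqueness of realizations up to isomorphism of the middle term then forces $j^\ast M=0$. Hence $M\in\Ker i^!\cap\Ker j^\ast=0$ by (SR5), so $M=0$. The first triangle reduces to $i_\ast i^! X\to X\to 0\dashrightarrow$, and the same zero-extension argument gives $X\cong i_\ast i^! X\in\Im i_\ast$.

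The only technical point throughout is the repeated appeal to the observation that an $\E$-triangle with two of its three entries equal to zero forces the third to vanish, which itself follows from triviality of the relevant $\E$-group and uniqueness of realizations up to isomorphism of the middle term. Once this is in hand, the argument is essentially dictated by the axioms, with (SR5) being used precisely to collapse the intermediate object $M$.
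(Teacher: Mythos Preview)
Your proof is correct and follows essentially the same strategy as the paper: use (SR2) for one inclusion, then for $X\in\Ker j^\ast$ show $\cone(\theta_X)$ (resp.\ $\cocone(\nu_X)$) lies in $\Ker i^!\cap\Ker j^\ast$ (resp.\ $\Ker i^\ast\cap\Ker j^\ast$) and invoke (SR5). The only difference is cosmetic: to obtain $j^\ast M=0$ you apply $j^\ast$ to the \emph{first} $\E_{\mathcal{B}}$-triangle $i_\ast i^! X\to X\to M\dashrightarrow$ and use $j^\ast X=0$ directly, whereas the paper applies $j^\ast$ to the \emph{second} triangle $M\to j_\ast j^\ast X\to Z\dashrightarrow$ and uses $Z\in\Ker j^\ast$ from Lemma~\ref{lem3-1}. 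Your route is marginally more economical, since it does not actually need the conclusion $Z\in\Ker j^\ast$ of Lemma~\ref{lem3-1}; indeed the first triangle with $M\in\Ker i^!$ already comes straight from (SR4).
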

\begin{proof} By (NER2), we know that $\Im i_{\ast}\subseteq \Ker j_{\ast}$. Assume that $X\in \Ker j_{\ast}$. Then by (SR4) and Lemma \ref{lem3-1}, there exist two 4-terms $\s$-exact sequences:
\begin{align}
 \xymatrix{i_\ast i^! X\ar[r]^-{\theta_X}&X\ar[r]^-{}&0\ar[r]&Z} \label{eq3-1}\\
 \xymatrix{ K\ar[r]&0\ar[r]^-{}&X\ar[r]^-{\nu_X}&i_\ast i^\ast X} \label{eq3-2}
\end{align}
with $K$ and $Z$ lie in {\rm$\Ker j^{\ast}$}, $\textrm{Cone}(\theta_X)\in \Ker i^!$ and $\textrm{CoCone}(\nu_X)\in \Ker i^\ast$.
By  $\s$-exact sequences  (\ref{eq3-1}) and (\ref{eq3-2}), we have the following
$\E_{\B}$-triangles
   \begin{align*}
     &\textrm{Cone}(\theta_X)\xrightarrow{f_{2}}0\rightarrow Z\dashrightarrow\\
     &K\rightarrow 0 \rightarrow\textrm{CoCone}(\nu_X)\dashrightarrow.
   \end{align*}
   Applying  the exact functor $j^{\ast}$ to the above two $\E_{\B}$-triangles, we have the following  $\E_{\C}$-triangles
   \begin{align*}
     &j^{\ast}(\textrm{Cone}(\theta_X))\xrightarrow{j^{\ast}(f_{2})}0\rightarrow j^{\ast}(Z)\dashrightarrow\\
     &j^{\ast}(K)\rightarrow 0 \rightarrow j^{\ast}(\textrm{CoCone}(\nu_X))\dashrightarrow.
   \end{align*}
   Since $K$ and $Z$ lie in {\rm$\Ker j^{\ast}$}, $j^{\ast}(Z)=0=j^{\ast}(K)=0$. Thus, $ j^{\ast}(\textrm{Cone}(\theta_X))=0=j^{\ast}(\textrm{CoCone}(\nu_X))$. That is $\textrm{Cone}(\theta_X)\in \Ker i^!\bigcap \Ker j^{\ast}$ and $\textrm{CoCone}(\nu_X)\in  \Ker i^\ast\bigcap \Ker j^{\ast}$. By  (SR5), we have that $\textrm{Cone}(\theta_X)=0$ or $\textrm{CoCone}(\nu_X)=0$. Suppose that $\textrm{Cone}(\theta_X)=0$. From the $\E_{\B}$-triangle $i_\ast i^! X\xrightarrow{\theta_X} X\xrightarrow{} \textrm{Cone}(\theta_X)\dashrightarrow$, we have the isomorphism $X\cong i_\ast i^! X$. Thus, $X\in\Im i_\ast $. By similar arguments, we get the desired result when $\textrm{CoCone}(\nu_X)=0$.   It completes the proof.
\end{proof}
\begin{proposition}\label{prop3-4}
Let {\rm($\mathcal{A}$, $\mathcal{B}$, $\mathcal{C}$)} be a weak $\s$-recollement of extriangulated categories. If $\mathcal{A}$, $\mathcal{B}$, $\mathcal{C}$ are abelian categories, then {\rm($\mathcal{A}$, $\mathcal{B}$, $\mathcal{C}$)} is a recollement of abelian categories.
\end{proposition}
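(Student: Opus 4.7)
The plan is to verify that, when $\A$, $\B$, $\C$ are abelian, each axiom of a weak $\s$-recollement translates directly into the corresponding axiom of a recollement of abelian categories. The key dictionary is that, for an abelian category viewed as an extriangulated category in the canonical way, the $\E$-triangles are precisely the short exact sequences, inflations are exactly the monomorphisms, and deflations are exactly the epimorphisms.

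First I would dispose of the adjoint and vanishing axioms. Condition (SR1) supplies the two adjoint triples $(i^*, i_*, i^!)$ and $(j_!, j^*, j_*)$ together with the exactness of $i_*$ and $j^*$; in the abelian setting, the left adjoints $i^*$ and $j_!$ are automatically right exact and the right adjoints $i^!$ and $j_*$ are automatically left exact, so the remaining exactness conditions come for free. The vanishing $j^* i_* = 0$ is (SR2), and the full faithfulness of $i_*$, $j_!$, $j_*$ is (SR3), both of which are axioms of an abelian recollement.

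The substantive step is translating (SR4). Given $X \in \B$, the left $\s$-exact sequence $i_* i^! X \xrightarrow{\theta_X} X \xrightarrow{\vartheta_X} j_* j^* X$ consists by definition of an $\E_\B$-triangle $i_* i^! X \xrightarrow{\theta_X} X \xrightarrow{p} K \dashrightarrow$ together with an inflation $h\colon K \to j_* j^* X$ satisfying $\vartheta_X = h \circ p$. Reading this in the abelian category $\B$, $\theta_X$ is a monomorphism with cokernel $p$, and $h$ is a monomorphism; hence $\Ker \vartheta_X = \Ker p = \Im \theta_X$, which yields the left exact sequence $0 \to i_* i^! X \to X \to j_* j^* X$. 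A dual unpacking of the right $\s$-exact sequence $j_! j^* X \to X \to i_* i^* X$ yields the right exact sequence $j_! j^* X \to X \to i_* i^* X \to 0$. Together with the items already verified, this matches the definition of a recollement of abelian categories.

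The main obstacle, if one can call it that, is simply to be careful with the dictionary between the extriangulated notions (conflation, inflation, deflation, left/right $\s$-exact sequence) and their abelian counterparts (short exact sequence, monomorphism, epimorphism, left/right exact three-term sequence); the Cone and CoCone conditions in (SR4) drop out trivially in the abelian setting since cokernels and kernels are automatic. Once this dictionary is firmly in hand, the argument is a straightforward verification with no conceptual difficulty.
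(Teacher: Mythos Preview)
Your argument is correct, but it takes a different route from the paper's. You directly translate the left and right $\s$-exact sequences of (SR4) into ordinary left and right exact three-term sequences in $\B$, which is the form of the axioms in some presentations of abelian recollements. The paper instead proceeds by promoting the weak $\s$-recollement to an $\s$-recollement: it verifies (SR5) by taking any $X\in\Ker i^!\cap\Ker j^*$, invoking Lemma~\ref{lem3-1} to obtain the four-term exact sequence $0\to i_*i^!X\to X\to j_*j^*X\to Z\to 0$, and reading off $X=0$ from the vanishing of the two outer terms; once (SR5) holds, Lemma~\ref{lem3-2} gives $\Ker j^*=\Im i_*$, which is the form of the axiom used in the references~\cite{MV,P01}. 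Your approach is more elementary and avoids Lemma~\ref{lem3-1} entirely, while the paper's approach has the small advantage of explicitly landing on the $\Ker j^*=\Im i_*$ formulation. If that is the definition of abelian recollement you are targeting, you should add one line: from your left exact sequence, any $X\in\Ker j^*$ satisfies $i_*i^!X\cong X$ via $\theta_X$, hence $\Ker j^*\subseteq\Im i_*$, and the reverse inclusion is (SR2).
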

\begin{proof} By Lemma \ref{lem3-2}, it suffices to show that in this case, weak $\s$-recollement satisfies (SR5). For any $X\in \Ker i^!\bigcap \Ker j^{\ast}$, by Lemma \ref{lem3-1}, we get an exact sequences
\begin{align*}
 \xymatrix{0\ar[r]&i_\ast i^! X\ar[r]^-{\theta_X}&X\ar[r]^-{\vartheta_X}&j_\ast j^\ast X\ar[r]&Z\ar[r]&0.}
\end{align*}
Since $i_\ast i^! X=0=j_\ast j^\ast X$, be the exactness,  we know that $X=0$. Similarly, we can prove that $\Ker i^\ast\bigcap \Ker j^{\ast}=0$. It completes the proof.
\end{proof}

\begin{lemma}\label{lemm3-1.1}
Let {\rm($\mathcal{A}$, $\mathcal{B}$, $\mathcal{C}$)} be  a  weak $\s$-recollement. Then the following statements hold.
\begin{enumerate}
  \item $i^{\ast}j_{!}=0$;
  \item $i^{!}j_{\ast}=0$.
\end{enumerate}
\end{lemma}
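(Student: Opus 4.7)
The plan is to derive both vanishing statements from the hypothesis $j^{\ast}i_{\ast}=0$ in (SR2), using only the adjunction isomorphisms from (SR1); no extriangulated-category machinery is needed here, which is typical for such formal identities about recollements.

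For part (1), I would fix an arbitrary object $Y\in\mathcal{C}$ and test the object $i^{\ast}j_{!}Y\in\mathcal{A}$ against an arbitrary $X\in\mathcal{A}$. Applying the adjunction $(i^{\ast},i_{\ast})$ and then the adjunction $(j_{!},j^{\ast})$ in succession yields
\[
\Hom_{\mathcal{A}}\bigl(i^{\ast}j_{!}Y,\,X\bigr)\;\cong\;\Hom_{\mathcal{B}}\bigl(j_{!}Y,\,i_{\ast}X\bigr)\;\cong\;\Hom_{\mathcal{C}}\bigl(Y,\,j^{\ast}i_{\ast}X\bigr)\;=\;\Hom_{\mathcal{C}}(Y,0)\;=\;0,
\]
where the last equality uses (SR2). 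Specializing $X=i^{\ast}j_{!}Y$ shows that the identity morphism of $i^{\ast}j_{!}Y$ is zero, forcing $i^{\ast}j_{!}Y=0$. Since $Y$ was arbitrary, $i^{\ast}j_{!}=0$ as functors.

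For part (2), the argument is dual: fix $Y\in\mathcal{C}$ and test $i^{!}j_{\ast}Y$ against an arbitrary $X\in\mathcal{A}$ on the left. Using first the adjunction $(i_{\ast},i^{!})$ and then $(j^{\ast},j_{\ast})$, I obtain
\[
\Hom_{\mathcal{A}}\bigl(X,\,i^{!}j_{\ast}Y\bigr)\;\cong\;\Hom_{\mathcal{B}}\bigl(i_{\ast}X,\,j_{\ast}Y\bigr)\;\cong\;\Hom_{\mathcal{C}}\bigl(j^{\ast}i_{\ast}X,\,Y\bigr)\;=\;\Hom_{\mathcal{C}}(0,Y)\;=\;0.
\]
Taking $X=i^{!}j_{\ast}Y$ again kills the identity, so $i^{!}j_{\ast}Y=0$ and hence $i^{!}j_{\ast}=0$.

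There is no real obstacle: the whole proof is a two-line formal consequence of the Yoneda principle applied to the adjunction-compositions. The only thing to be mindful of is ordering the adjunctions correctly (right adjoint of $i^{\ast}$ is $i_{\ast}$, right adjoint of $j_{!}$ is $j^{\ast}$, etc.), and recording that neither (SR3), (SR4), nor (SR5) is needed — only (SR1) and (SR2). Consequently this lemma holds already at the level of weak $\s$-recollements, as stated.
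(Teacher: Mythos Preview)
Your proof is correct and essentially identical to the paper's: both arguments use only (SR1) and (SR2), applying the two adjunctions in succession to show that $\Hom_{\mathcal{A}}(i^{\ast}j_{!}Y,i^{\ast}j_{!}Y)=0$ (and dually for part (2)). The only cosmetic difference is that the paper plugs in $X=i^{\ast}j_{!}Y$ from the outset rather than first computing $\Hom_{\mathcal{A}}(i^{\ast}j_{!}Y,X)$ for arbitrary $X$ and then specializing.
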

\begin{proof}
(1) Let $X\in\C$. Since $j^{\ast}i_{\ast}=0$, we have the following
\begin{align*}
  \Hom_{\C}(i^{\ast}j_{!}(X),i^{\ast}j_{!}(X)) &\cong \Hom_{\B}(j_{!}(X),i_{\ast}i^{\ast}j_{!}(X)) \\
   &\cong \Hom_{\B}(X,j^{*}i_{\ast}i^{\ast}j_{!}(X))\\
   &=0.
\end{align*}
Thus, $i^{\ast}j_{!}=0$. Similarly, one can prove (2).
\end{proof}
\begin{proposition}\label{prop3-3}
Let {\rm($\mathcal{A}$, $\mathcal{B}$, $\mathcal{C}$)} be an $\s$-recollement of extriangulated categories and $X\in\mathcal{B}$. Then the following statements hold.
\begin{enumerate}
  \item If $i^{!}$ is exact, there exists an $\mathbb{E}_\mathcal{B}$-triangle
  \begin{equation*}\label{third}
  \xymatrix{i_\ast i^! X\ar[r]^-{\theta_X}&X\ar[r]^-{\vartheta_X}&j_\ast j^\ast X\ar@{-->}[r]&}
   \end{equation*}
 where $\theta_X$ and  $\vartheta_X$ are given by the adjunction morphisms.
  \item If $i^{\ast}$ is exact, there exists an $\mathbb{E}_\mathcal{B}$-triangle
  \begin{equation*}\label{four}
  \xymatrix{ j_! j^\ast X\ar[r]^-{\upsilon_X}&X\ar[r]^-{\nu_X}&i_\ast i^\ast X \ar@{-->}[r]&}
   \end{equation*}
where $\upsilon_X$ and $\nu_X$ are given by the adjunction morphisms.

\end{enumerate}

\end{proposition}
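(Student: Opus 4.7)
My plan is to exploit the $4$-term $\s$-exact sequences produced by Lemma \ref{lem3-1}, combined with the exactness of $i^{!}$ (resp.\ $i^{\ast}$), to force the extra ``tail'' term in the $4$-term sequence to vanish. Once this vanishing is established, the inner map in the $4$-term sequence collapses to an isomorphism and the remaining $3$-term sequence is an honest $\mathbb{E}_{\mathcal{B}}$-triangle.

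For (1), I would first unfold the $4$-term $\s$-exact sequence $i_{\ast}i^{!}X\xrightarrow{\theta_{X}}X\xrightarrow{\vartheta_{X}}j_{\ast}j^{\ast}X\to Z$ into the two $\mathbb{E}_{\mathcal{B}}$-triangles $i_{\ast}i^{!}X\to X\to \textrm{Cone}(\theta_{X})\dashrightarrow$ and $\textrm{Cone}(\theta_{X})\xrightarrow{g_{2}} j_{\ast}j^{\ast}X\to Z\dashrightarrow$, with $\textrm{Cone}(\theta_{X})\in\Ker i^{!}$ by (SR4) and $Z\in\Ker j^{\ast}$ by Lemma \ref{lem3-1}. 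Since $i^{!}$ is exact, applying it to the second triangle yields an $\mathbb{E}_{\mathcal{A}}$-triangle whose first two terms are $i^{!}\textrm{Cone}(\theta_{X})=0$ and, by Lemma \ref{lemm3-1.1}(2), $i^{!}j_{\ast}j^{\ast}X=0$; hence $i^{!}Z=0$. Lemma \ref{lem3-2} then furnishes $Z\cong i_{\ast}A$ for some $A\in\mathcal{A}$, and since $i_{\ast}$ is fully faithful, Lemma \ref{lem2-1} gives $i^{!}i_{\ast}\cong\id$, so $A\cong i^{!}Z=0$ and therefore $Z=0$. This forces $g_{2}$ to be an isomorphism, and composing with the first triangle recovers the desired $\mathbb{E}_{\mathcal{B}}$-triangle $i_{\ast}i^{!}X\xrightarrow{\theta_{X}}X\xrightarrow{\vartheta_{X}}j_{\ast}j^{\ast}X\dashrightarrow$.

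Statement (2) will follow by a completely dual argument on the second $4$-term sequence: split it into $K\to j_{!}j^{\ast}X\to \textrm{CoCone}(\nu_{X})\dashrightarrow$ and $\textrm{CoCone}(\nu_{X})\to X\xrightarrow{\nu_{X}} i_{\ast}i^{\ast}X\dashrightarrow$, apply the exact $i^{\ast}$ to the first, and use $i^{\ast}j_{!}=0$ (Lemma \ref{lemm3-1.1}(1)) together with $\textrm{CoCone}(\nu_{X})\in\Ker i^{\ast}$ to obtain $i^{\ast}K=0$; then Lemma \ref{lem3-2} combined with $i^{\ast}i_{\ast}\cong\id$ from Lemma \ref{lem2-1} forces $K=0$. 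The only nonroutine point throughout is the final collapse: that the vanishing of $Z$ (resp.\ $K$) actually converts the conflation into an isomorphism. This holds in any extriangulated category, since a conflation $A\xrightarrow{x}B\to 0\dashrightarrow$ realizes the zero extension and is therefore isomorphic to the split conflation $A\xrightarrow{\id}A\to 0$; I expect this to be the main observation to state carefully, after which the rest is just bookkeeping with Lemmas \ref{lem3-1}, \ref{lem3-2}, \ref{lemm3-1.1}, and \ref{lem2-1}.
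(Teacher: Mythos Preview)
Your proposal is correct and follows essentially the same approach as the paper's proof: unfold the $4$-term $\s$-exact sequence from Lemma~\ref{lem3-1} into two $\mathbb{E}_{\mathcal{B}}$-triangles, apply the exact functor $i^{!}$ (resp.\ $i^{\ast}$) to the second (resp.\ first) to kill the tail $Z$ (resp.\ $K$) via Lemmas~\ref{lemm3-1.1} and~\ref{lem3-2} together with the full faithfulness of $i_{\ast}$, and conclude that the connecting map is an isomorphism. Your extra remark justifying why a conflation $A\to B\to 0\dashrightarrow$ forces $A\cong B$ is a welcome clarification that the paper leaves implicit.
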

\begin{proof}
 (1) Assume that $i^!$ is exact. By (SR4) and Lemma \ref{lem3-1}, there exists an   $\s$-exact sequence
  \begin{equation*}
  \xymatrix{i_\ast i^! X\ar[r]^-{\theta_X}&X\ar[r]^-{\vartheta_X}&j_\ast j^\ast X\ar[r]&Z}
   \end{equation*}
   with $\textrm{Cone}(\theta_X)\in \ker i^!$ and $Z\in\Ker j^{\ast}$. Then we have two $\E_{\B}$-triangles
   \begin{align*}
    i_\ast i^! X\xrightarrow{\theta_X} X\xrightarrow{f_{1}} \textrm{Cone}(\theta_X)\dashrightarrow \\
     \textrm{Cone}(\theta_X)\xrightarrow{f_{2}} j_\ast j^\ast X\rightarrow Z\dashrightarrow.
   \end{align*}
   Applying $i^!$ to the bottom  $\E_{\B}$-triangle, we get the following  $\E_{\C}$-triangle
   \begin{align*}
    i^! \textrm{Cone}(\theta_X)\xrightarrow{j^\ast(f_{2})} i^! j_\ast j^\ast X\rightarrow i^! Z\dashrightarrow.
   \end{align*}
   Since $ \textrm{Cone}(\theta_X)\in \ker i^!$ and $i^! j_\ast j^\ast X=0$ ( by Lemma \ref{lemm3-1.1} (2)), we have $i^! Z=0$. By Lemma \ref{lem3-2}, there exists $Z'\in \A$ such that $Z\cong i_{\ast}(Z')$. Since $i_{\ast}$ is fully faithful, $Z'\cong i^!i_{\ast}(Z')\cong i^! Z=0$. Thus, $Z\cong 0$. It implies that $f_{2}:\textrm{Cone}(\theta_X)\rightarrow j_\ast j^\ast X$ is  an isomorphism. Then we get the  desired $\E_{\B}$-triangle $i_\ast i^! X\xrightarrow{\theta_X} X\rightarrow j_\ast j^\ast X\dashrightarrow$.\par
   (2) It follows from a similar argument as in the statement (1).
\end{proof}
\begin{corollary}\label{cor3-1}
Let {\rm($\mathcal{A}$, $\mathcal{B}$, $\mathcal{C}$)} be an $\s$-recollement of extriangulated categories and $X\in\mathcal{B}$. If $\mathcal{A}$, $\mathcal{B}$, and $\mathcal{C}$ are triangulated categories, then {\rm ($\mathcal{A}$, $\mathcal{B}$, $\mathcal{C}$)} is a recollement of triangulated categories.
\end{corollary}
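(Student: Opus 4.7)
The plan is to verify that every axiom of a triangulated recollement is already a consequence of the $\s$-recollement data once all three ambient categories are triangulated; the only nontrivial point is upgrading the $\s$-exact sequences furnished by (SR4) to genuine distinguished triangles, and this will follow from Proposition \ref{prop3-3} as soon as the four outer functors are known to be exact.

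First I would observe that when $\mathcal{A}$, $\mathcal{B}$, $\mathcal{C}$ are triangulated the notion of exact functor between extriangulated categories coincides with the classical notion of triangulated functor. Axiom (SR1) equips us with adjoint triples $(i^{*}, i_{*}, i^{!})$ and $(j_{!}, j^{*}, j_{*})$ in which $i_{*}$ and $j^{*}$ are exact. Invoking the standard fact that a left (respectively right) adjoint of a triangulated functor between triangulated categories carries a canonical triangulated structure, each of $i^{*}$, $i^{!}$, $j_{!}$, $j_{*}$ is automatically exact; in particular the hypotheses of Proposition \ref{prop3-3}(1) and Proposition \ref{prop3-3}(2) are both satisfied.

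Applying Proposition \ref{prop3-3} to each $X\in\mathcal{B}$ then produces $\mathbb{E}_{\mathcal{B}}$-triangles
$$i_{*}i^{!}X\xrightarrow{\theta_{X}} X\xrightarrow{\vartheta_{X}} j_{*}j^{*}X\dashrightarrow \qquad\text{and}\qquad j_{!}j^{*}X\xrightarrow{\upsilon_{X}} X\xrightarrow{\nu_{X}} i_{*}i^{*}X\dashrightarrow,$$
whose connecting maps are the adjunction morphisms. Since $\mathcal{B}$ is triangulated, $\mathbb{E}_{\mathcal{B}}$-triangles coincide with distinguished triangles, and these are precisely the two triangles demanded by the definition of a recollement of triangulated categories in the sense of Beilinson--Bernstein--Deligne. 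The remaining BBD axioms (the vanishing $j^{*}i_{*}=0$ and the fully faithfulness of $i_{*}$, $j_{!}$, $j_{*}$) are supplied verbatim by (SR2) and (SR3), so putting everything together yields the desired triangulated recollement.

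The main technical obstacle is the very first step: establishing that the adjoints of triangulated functors are themselves triangulated. This is a classical result, but it requires constructing the comparison natural isomorphism with the shift out of the unit/counit and verifying compatibility with triangles; once it is taken for granted, the remainder of the argument is a formal citation of Proposition \ref{prop3-3}.
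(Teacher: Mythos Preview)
Your argument is correct and follows essentially the same route as the paper: the paper invokes \cite[Lemma 5.3.6]{N} to deduce that $i^{*}$ and $i^{!}$ are exact from the exactness of $i_{*}$, and then appeals to Proposition~\ref{prop3-3} exactly as you do. Your write-up is in fact slightly more complete, since you also record that $j_{!}$ and $j_{*}$ are triangulated (needed for the BBD axioms) and you spell out that (SR2) and (SR3) supply the remaining conditions verbatim.
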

\begin{proof} Since $i_{\ast}$ and $j^{\ast}$ are exact, by  \cite[Lemma 5.3.6]{N}, we have that $i^\ast$ and $i^!$ are also exact. By Proposition \ref{prop3-3}, ($\mathcal{A}$, $\mathcal{B}$, $\mathcal{C}$) is a recollement of triangulated categories.
\end{proof}

\textbf{WIC Condition} Let $\mathcal{A}$ be an extriangulated category.
\begin{enumerate}
\item Let $f : X\rightarrow Y$ and $g : Y \rightarrow Z$ be any composable pair of
morphisms in $\A$. If $gf$ is an inflation, then $f$ is an inflation.
\item Let $f : X\rightarrow Y$ and $g : Y \rightarrow Z$ be any composable pair of morphisms in $\A$. If $gf$ is a deflation, then $g$ is a deflation.
\end{enumerate}

\begin{definition}[\cite{WWZ}]
A morphism $f$ in $\mathcal{C}$ is called {\em compatible}, if ``$f$ is both an inflation and a deflation" implies that $f$ is an isomorphism.
\end{definition}
It is clear that all morphisms are compatible in an exact category. While, the compatible morphisms in a triangulated category $\C$ are just the isomorphisms in $\C$.

Now, we will remark that each recollement of extriangulated categories in the sense of \cite[Definition 3.1]{WWZ} is an $\s$-recollement under (WIC) Condition.\\

\begin{proposition} Under {\rm(WIC)} Conditions,
each recollement of extriangulated categories in the sense of {\rm\cite[Definition 3.1]{WWZ}} is an $\s$-recollement.
\end{proposition}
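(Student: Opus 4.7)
The plan is to verify conditions (SR1)--(SR5) of an $\s$-recollement for a WWZ-recollement, using the WIC hypothesis as a bridge between WWZ's notion of exact functor (phrased via compatible morphisms and the modification of the $\s$-exact functor of \cite{B-TS}) and the plain $\s$-exact functor used in this paper. The bulk of the work is concentrated at (SR4); the remaining axioms are either identical in the two frameworks or follow formally from the data produced by (SR4).

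For (SR1), (SR2) and (SR3) no real translation is required: the existence of the two adjoint triples $(i^*, i_*, i^!)$ and $(j_!, j^*, j_*)$, the exactness of $i_*$ and $j^*$, the equation $j^*i_* = 0$, and the fully faithfulness of $i_*$, $j_!$ and $j_*$ are all explicit hypotheses of \cite[Definition 3.1]{WWZ}. Under WIC, an $\s$-exact functor in WWZ's modified sense coincides with an $\s$-exact functor in the sense of Section~2, so exactness of $i_*$ and $j^*$ transfers directly. For (SR4), WWZ's axiom supplies, for each $X\in\B$, a WWZ-left exact sequence $i_*i^!X \xrightarrow{\theta_X} X \xrightarrow{\vartheta_X} j_*j^*X$ with $\textrm{Cone}(\theta_X)\in\Ker i^!$ together with its WWZ-right exact analogue. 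I would unwind WWZ's definition to extract an $\E_{\B}$-triangle $i_*i^!X \xrightarrow{\theta_X} X \to L \dashrightarrow$ together with a compatible morphism $L\to j_*j^*X$ composing to $\vartheta_X$, and then invoke WIC—precisely in order to force this compatible map to be a genuine inflation—to conclude that the sequence is left $\s$-exact in the sense of this paper. The right $\s$-exact sequence is handled by the dual argument, and the cone/cocone conditions are inherited verbatim from WWZ's axiom.

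For (SR5) I would verify $\Ker i^! \cap \Ker j^* = 0$ (the other case being symmetric). If $X$ lies in both kernels, the left $\s$-exact sequence produced in (SR4) degenerates to $0\to X\to 0$; combined with $\textrm{Cone}(\theta_X)\in\Ker i^!$ and an argument in the spirit of Lemma~\ref{lem3-2}, this forces $X = 0$. The main obstacle is (SR4): correctly interpreting WWZ's compatibility-laden definition so as to recover the clean $\s$-exact datum demanded here. Once WIC has been used to identify compatible inflations with inflations in the relevant sense, the rest of the verification reduces to quoting WWZ's axioms.
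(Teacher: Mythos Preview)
Your proposal misreads WWZ's axiom (R5) in a way that leaves the heart of the argument unaddressed. You assert that WWZ already supplies $\mathrm{Cone}(\theta_X)\in\Ker i^{!}$ (and its dual), so that these conditions are ``inherited verbatim''. They are not. What WWZ gives, for each $X\in\B$, are \emph{four-term} left/right exact $\E_{\B}$-triangle sequences
\[
i_*i^!X\xrightarrow{\theta_X}X\xrightarrow{\vartheta_X}j_*j^*X\to i_*A
\quad\text{and}\quad
i_*A'\to j_!j^*X\xrightarrow{\upsilon_X}X\xrightarrow{\nu_X}i_*i^*X
\]
with $A,A'\in\A$; these decompose into $\E_{\B}$-triangles $i_*i^!X\to X\to \mathrm{Cone}(\theta_X)\dashrightarrow$ and $\mathrm{Cone}(\theta_X)\xrightarrow{f}j_*j^*X\to i_*A\dashrightarrow$ (and dually), with $f$ \emph{compatible}. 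The cone/cocone conditions are exactly the new ingredient in the present paper's (SR4) and must be \emph{proved}. The paper does this by applying the WWZ-left-exact functor $i^{!}$ to the second triangle: one obtains an $\E_{\A}$-triangle $i^{!}(\mathrm{Cone}(\theta_X))\xrightarrow{i^{!}(f)}i^{!}j_*j^*X\to K\dashrightarrow$ with $i^{!}(f)$ compatible; since $i^{!}j_*=0$, the map $i^{!}(f)$ is a compatible inflation into $0$, hence simultaneously a deflation, hence an isomorphism, forcing $i^{!}(\mathrm{Cone}(\theta_X))=0$.

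Relatedly, your proposed use of WIC---to upgrade a compatible map $L\to j_*j^*X$ to a genuine inflation so that the sequence becomes left $\s$-exact---is misplaced. The morphism $f\colon \mathrm{Cone}(\theta_X)\to j_*j^*X$ is \emph{already} an inflation, being the first arrow of an $\E_{\B}$-triangle supplied by the four-term structure; no appeal to WIC is needed for the $\s$-exactness in (SR4). The genuine role of compatibility (and thus of the WWZ/(WIC) framework) is to force morphisms to or from zero to be isomorphisms: this is what drives both the cone argument above and (SR5). For (SR5), the paper argues that if $Y\in\Ker i^{!}\cap\Ker j^{*}$, then $Y\cong\mathrm{Cone}(\theta_Y)$ and the compatible inflation $f\colon\mathrm{Cone}(\theta_Y)\to j_*j^*Y=0$ is also a deflation, hence an isomorphism, so $Y=0$. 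Your sketch for (SR5) points roughly in this direction but does not identify this mechanism.
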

\begin{proof} Let ($\mathcal{A}$, $\mathcal{B}$, $\mathcal{C}$) be a recollement of extriangulated categories in the sense of \cite[Definition 3.1]{WWZ}.
By Definition 3.1 in the reference \cite{WWZ}, for any $X\in \B$, there exist a a left exact $\mathbb{E}_{\mathcal B}$-triangle sequence
\begin{equation*}
\xymatrix{i_\ast i^! X\ar[r]^-{\theta_X}&X\ar[r]^-{\vartheta_X}&j_\ast j^\ast X\ar[r]&i_\ast A}
\end{equation*}
 and a right exact $\mathbb{E}_{\mathcal B}$-triangle sequence
\begin{equation*}
\xymatrix{i_\ast\ar[r] A' &j_! j^\ast X\ar[r]^-{\upsilon_X}&X\ar[r]^-{\nu_X}&i_\ast i^\ast X&}
\end{equation*}
with $A$ and $A'$ lie in $\A$. Then there are four $\mathbb{E}_{\mathcal B}$-triangles:
\begin{align}
  &i_\ast i^! X\xrightarrow{\theta_X}X\rightarrow \textrm{Cone}(\theta_X)\dashrightarrow \label{eq3-5}\\
&\textrm{Cone}(\theta_X)\xrightarrow{f} j_\ast j^\ast X\rightarrow i_\ast A\dashrightarrow \label{eq3-3}\\
&i_\ast A'\rightarrow j_! j^\ast X\xrightarrow{g} \textrm{CoCone}(\nu_X)\dashrightarrow\label{eq3-4}\\
&\textrm{CoCone}(\nu_X)\rightarrow X\xrightarrow{\nu_X} i_\ast i^\ast X\dashrightarrow\label{eq3-6}
\end{align}
where $f$ and $g$ are compatible morphism.\par
 It suffices  to show that $\textrm{Cone}(\theta_X)\in \Ker i^!$,  $\textrm{CoCone}(\nu_X)\in \Ker i^\ast$, and $\Ker i^{!}\bigcap\Ker j^{\ast}=0$ (or {\rm$\Ker i^\ast\bigcap \Ker j^{\ast}=0$}). \par
 Applying the left exact functor $i^!$ to the $\mathbb{E}_{\mathcal B}$-triangle (\ref{eq3-3}), we get a  $\mathbb{E}_{\mathcal C}$-triangle $$i^!(\textrm{Cone}(\theta_X))\xrightarrow{i^!(f)} i^!j_\ast j^\ast X\rightarrow K\dashrightarrow.$$
Since $ i^!j_\ast j^\ast X=0$ and $i^!(f)$ is compatible morphism, $i^!(\textrm{Cone}(\theta_X))=0$ and so, $\textrm{Cone}(\theta_X)\in \Ker i^!$. \par
Applying the right exact functor $i^\ast$ to the $\mathbb{E}_{\mathcal B}$-triangle (\ref{eq3-4}), we get a  $\mathbb{E}_{\mathcal C}$-triangle
$$W\rightarrow i^\ast j_! j^\ast X\xrightarrow{i^\ast(g)} i^\ast(\textrm{CoCone}(\nu_X))\dashrightarrow.$$
Since $i^\ast j_! j^\ast X=0$ and  $i^\ast(g)$ is  compatible morphism, $i^\ast(\textrm{CoCone}(\nu_X))=0$ and so,  $\textrm{CoCone}(\nu_X)\in\Ker i^\ast$.\par
For any $Y\in\Ker i^{!}\bigcap\Ker j^{\ast}$, then from (\ref{eq3-5}) and (\ref{eq3-3}), $Y\cong \textrm{Cone}(\theta_Y)\cong 0$ since the compatible morphism $f:\textrm{Cone}(\theta_Y)\rightarrow 0$ is both inflation and deflation.  By similar argument, we have $\Ker i^\ast\bigcap \Ker j^{\ast}=0$.
Therefore, the recollement ($\mathcal{A}$, $\mathcal{B}$, $\mathcal{C}$) is  an $\s$-recollement.
\end{proof}
\begin{proposition}\label{prop3-6}
Let {\rm($\mathcal{A}$, $\mathcal{B}$, $\mathcal{C}$)} be  an $\s$-recollement. Then the following statements hold.
\begin{enumerate}
  \item If  $i^{\ast}$  is exact, then {\rm$\Ker i^{\ast}=\Im j_{!}$}.
  \item If  $i^{!}$ is  exact, then {\rm$\Ker i^{!}=\Im j_{\ast}$}.
  \item If $i^{\ast}$ is exact and $j_{!}$ is right $\s$-exact, then $j_{!}$ is exact.
  \item If $i^{!}$ is exact and $j_{\ast}$ is left $\s$-exact, then $j_{\ast}$ is exact.
\end{enumerate}
\end{proposition}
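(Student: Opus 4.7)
The plan is to establish (1) and (2) directly from Proposition \ref{prop3-3}, and then to deduce (3) and (4) by combining the right (respectively left) $\s$-exactness of $j_!$ (respectively $j_*$) with (1) (respectively (2)) and the full faithfulness of $j_!$ (respectively $j_*$).

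For (1), the inclusion $\Im j_!\subseteq\Ker i^{\ast}$ is immediate from Lemma \ref{lemm3-1.1}(1). Conversely, for $X\in\Ker i^{\ast}$ one has $i_{\ast}i^{\ast}X=0$, so Proposition \ref{prop3-3}(2) (applicable since $i^{\ast}$ is exact) supplies an $\E_{\B}$-triangle $j_!j^{\ast}X\xrightarrow{\upsilon_X}X\to 0\dashrightarrow$, forcing $\upsilon_X$ to be an isomorphism, hence $X\in\Im j_!$. Part (2) is the dual assertion, proved using Lemma \ref{lemm3-1.1}(2), Proposition \ref{prop3-3}(1) and exactness of $i^{!}$.

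For (3), I would take an $\E_{\C}$-triangle $A\xrightarrow{x}B\xrightarrow{y}C\dashrightarrow$ and apply right $\s$-exactness of $j_!$ to obtain an $\E_{\B}$-triangle $A'\xrightarrow{u}j_!B\xrightarrow{j_!y}j_!C\dashrightarrow$ together with a deflation $v:j_!A\to A'$ satisfying $uv=j_!x$. Applying the exact functor $i^{\ast}$ and using $i^{\ast}j_!=0$ (Lemma \ref{lemm3-1.1}(1)) yields $i^{\ast}(A')=0$, whence by (1) there is an isomorphism $A'\cong j_!A_0$ for some $A_0\in\C$. Full faithfulness of $j_!$ then lets one rewrite $u$, under this identification, as $j_!x_0$ for a unique $x_0:A_0\to B$. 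Applying the exact functor $j^{\ast}$ to the resulting $\E_{\B}$-triangle and invoking the unit isomorphism $\id\xrightarrow{\sim}j^{\ast}j_!$ from Lemma \ref{lem2-1}(1) produces an $\E_{\C}$-triangle $A_0\xrightarrow{x_0}B\xrightarrow{y}C\dashrightarrow$. By uniqueness of the cocone of the deflation $y$, there is an isomorphism $\sigma:A_0\xrightarrow{\sim}A$ with $x\sigma=x_0$; transferring the $\E_{\B}$-triangle along $j_!\sigma^{-1}$ yields the desired $\E_{\B}$-triangle $j_!A\xrightarrow{j_!x}j_!B\xrightarrow{j_!y}j_!C\dashrightarrow$. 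The natural transformation required for $\s$-exactness is inherited from the one furnished by right $\s$-exactness of $j_!$.

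Part (4) runs by the dual construction: left $\s$-exactness of $j_*$ produces an $\E_{\B}$-triangle $j_*A\xrightarrow{j_*x}j_*B\to C'\dashrightarrow$ together with an inflation $C'\to j_*C$ whose composite with the previous deflation equals $j_*y$; exactness of $i^{!}$ and $i^{!}j_*=0$ (Lemma \ref{lemm3-1.1}(2)) force $i^{!}(C')=0$, so by (2) one has $C'\cong j_*C_0$; the counit isomorphism $j^{\ast}j_*\cong\id$ from Lemma \ref{lem2-1}(2) combined with uniqueness of the cone of the inflation $x$ then assembles the required $\E_{\B}$-triangle $j_*A\xrightarrow{j_*x}j_*B\xrightarrow{j_*y}j_*C\dashrightarrow$. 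I expect the main technical point to be the compatibility check in (3) and (4) ensuring that the transferred $\E_{\B}$-triangle realizes the extension class dictated by the natural transformation $\eta_{(C,A)}$ coming from the $\s$-exactness hypothesis, a verification that should follow from naturality of the adjunction unit and counit.
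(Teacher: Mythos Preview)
Your proposal is correct and follows essentially the same strategy as the paper. For (1) you take a slight shortcut by invoking Proposition~\ref{prop3-3}(2) directly, whereas the paper rebuilds the triangle $i_\ast(A)\to j_!j^\ast X\to X\dashrightarrow$ from Lemmas~\ref{lem3-1} and~\ref{lem3-2} and then kills $i_\ast(A)$ by applying the exact $i^\ast$; both routes yield $X\cong j_!j^\ast X$. For (3) your argument coincides with the paper's: the paper writes the right $\s$-exact factorization as $K\to j_!A\xrightarrow{b}Q$ and $Q\xrightarrow{a}j_!B\xrightarrow{j_!y}j_!C$, uses $i^\ast Q=0$ together with Proposition~\ref{prop3-3}(2) to identify $Q\cong j_!j^\ast Q$, then compares with the original $\E_\C$-triangle via $j^\ast$ and full faithfulness of $j_!$ to conclude that $b$ is an isomorphism---exactly the mechanism you describe with $A'$, $A_0$, and the transfer along $j_!\sigma^{-1}$.
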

\begin{proof}

We assume that ($\mathcal{A}$, $\mathcal{B}$, $\mathcal{C}$) is  an $\s$-recollement. Here, we just prove (1) and (3), the proofs of (2) and (4) are similar to (1) and (3), respectively.\par
(1) Assume that $i^{\ast}$  is exact. For any $X\in \Ker i^{\ast}$, by Lemma \ref{lem3-1} and Lemma \ref{lem3-2}, we have an $\E_{\B}$-triangle
$$i_{\ast}(A)\rightarrow j_{!}j^{\ast}(X)\rightarrow X\dashrightarrow.$$
Applying $i^{\ast}$ to the above $\E_{\B}$-triangle, one obtain the following  $\E_{\A}$-triangle
$$i^{\ast}i_{\ast}(A)\rightarrow i^{\ast}j_{!}j^{\ast}(X)\rightarrow i^{\ast}(X)\dashrightarrow.$$
Since $i^{\ast}j_{!}j^{\ast}(X)=i^{\ast}(X)=0$ and $i_{\ast}$ is fully faithful, $A\cong i^{\ast}i_{\ast}(A)\cong0$. Then $i_{\ast}(A)=0$ and hence, $X \cong j_{!}j^{\ast}(X)$. It means that $\Ker i^{\ast}\subseteq \Im j_{!}$. By Lemma \ref{lemm3-1.1} (1), we know that $\Ker i^{\ast}=\Im j_{!}$.\par

(3) It suffices to show that for any $\E_{\C}$-triangle $A\overset{x}{\longrightarrow}B\overset{y}{\longrightarrow}C\dashrightarrow$ , the sequence  $$j_{!}(A)\overset{j_{!}(x)}{\longrightarrow}j_{!}(B)\overset{j_{!}(y)}{\longrightarrow}j_{!}(C)$$
is a conflation in $\B$.\par
Since $j_{!}$ is right exact, we have a right $\s$-exact sequence
$$j_{!}(A)\overset{j_{!}(x)}{\longrightarrow}j_{!}(B)\overset{j_{!}(y)}{\longrightarrow}j_{!}(C)$$
Then there are two  $\E_{\B}$-triangles
\begin{align}
  &K\rightarrow j_{!}(A)\xrightarrow{b} Q\dashrightarrow \label{eq3-7}\\
  &Q\xrightarrow{a} j_{!}(B)\xrightarrow{j_{!}(y)}  j_{!}(C)\dashrightarrow\label{eq3-8}
\end{align}
where $j_{!}(x)=ab$. Applying $i^{\ast}$ to $\E_{\B}$-triangle (\ref{eq3-8}), from the exactness of $i^{\ast}$, we have an $\E_{\A}$-triangle $i^{\ast}(Q)\xrightarrow{i^{\ast}(a)} i^{\ast}j_{!}(B)\xrightarrow{i^{\ast}j_{!}(y)}  i^{\ast}j_{!}(C)\dashrightarrow$. By Lemma \ref{lemm3-1.1} (1), we know that $i^{\ast}j_{!}=0$ and hence, $i^{\ast}(Q)=0$. Then, for the object $Q$,  by Proposition \ref{prop3-3} (2),  the counit map $\varepsilon_{Q}:j_! j^\ast (Q)\rightarrow Q$ is an isomorphism. \par
Applying the exact $j^{\ast}$ to $\E_{\B}$-triangle (\ref{eq3-8}), we obtain a morphism $\alpha:X\rightarrow j^\ast (Q)$ such that the following diagram commutative
$$\xymatrix{
A\ar[r]^{x}\ar[d]^{\alpha}&B\ar[d]^{\eta_{B}}\ar[r]^{y}&C\ar[d]^{\eta_{C}}\ar@{-->}[r]&\\
j^\ast (Q)\ar[r] & j^\ast j_!( B)\ar[r]^-{j^\ast j_!(y)}&j^\ast j_!( C)\ar@{-->}[r]&}$$
Since $j_!$ is fully faithful, the unit maps $\eta_{B}$ and $\eta_{C}$ are isomorphisms. Then so does $\alpha$. Then we have the isomorphisms $j_! (A)\cong j_!j^\ast (Q)\cong Q$. Set the isomorphism  $\alpha':j_! (A)\rightarrow Q$. Then we get an $\E_{\B}$-triangle
\begin{equation}\label{eq3-9}
j_! (A)\xrightarrow{a\alpha'} j_{!}(B)\xrightarrow{j_{!}(y)}  j_{!}(C)\dashrightarrow.
\end{equation}
Since $j_!$ is fully faithful, there exist two morphisms $a':A\rightarrow B$ and $b':A\rightarrow A$ such that $j_{!}(a')=a\alpha'$ and $j_{!}(b')=\alpha'^{-1}b$. Then $j_{!}(a')j_{!}(b')=a\alpha'\alpha'^{-1}b=ab=j_{!}(x)$. Thus, $j_{!}(x-a'b')=0$ and so, $x-a'b'=0$ since $j_!$ is fully faithful. It means that $x=a'b'$.\par
Applying the exact functor $j^{\ast}$ to $\E_{\B}$-triangle (\ref{eq3-9}), we have the following commutative diagram
$$\xymatrix{
A\ar[r]^{a'}\ar[d]^{\eta_{A}}&B\ar[d]^{\eta_{B}}\ar[r]^{y}&C\ar[d]^{\eta_{C}}&\\
j^\ast j_! (A)\ar[r]^{j_{!}(a')} & j^\ast j_!( B)\ar[r]^-{j^\ast j_!(y)}&j^\ast j_!( C)&}$$
Thus, $A\xrightarrow{a'} B\xrightarrow{y}  C\dashrightarrow$ is an $\E_{\C}$-triangle since $\eta: j^\ast j_!\rightarrow id$ is a natural equivalence. Then we have  the following commutative diagram.
$$\xymatrix{
A\ar[r]^{x}\ar[d]^{b'}&B\ar@{=}[d]\ar[r]^{y}&C\ar@{=}[d]\ar@{-->}[r]&\\
A\ar[r]^{a'} & B\ar[r]^-{y}& C\ar@{-->}[r]&}$$
Thus, we know that $b'$ is an isomorphism and so does $j_{!}(b')$. It implies that $b$ is an isomorphism since $j_{!}(b')=\alpha'^{-1}b$. Then we we have the following commutative diagram
$$\xymatrix{
j_! (A)\ar[r]^{j_{!}(x)}\ar[d]^{b}&j_!( B)\ar@{=}[d]\ar[r]^{ j_!(y)}&j_!( C)\ar@{=}[d]&\\
Q\ar[r]^{a} &  j_!( B)\ar[r]^-{ j_!(y)}& j_!( C)&}$$
Therefore, $j_{!}(A)\overset{j_{!}(x)}{\longrightarrow}j_{!}(B)\overset{j_{!}(y)}{\longrightarrow}j_{!}(C)\dashrightarrow$ is an $\E_{\B}$-triangle.
\end{proof}

At last, we will remark that if ($\mathcal{A}$, $\mathcal{B}$, $\mathcal{C}$) is  an $\s$-recollement, then $\C$ is  a localization of $\mathcal{B}$ in the sense of \cite[Theorem 3.5]{NOS}.

\begin{proposition}\label{prop3-5}
Let {\rm($\mathcal{A}$, $\mathcal{B}$, $\mathcal{C}$)} be  an $\s$-recollement of extriangulated categories. Then there is a unique equivalence functor $\widetilde{j^\ast}:\widetilde{\mathcal{B}}\rightarrow \mathcal{C}$,  where $\widetilde{\mathcal{B}}$, equipped
with an exact functor $Q:\B\rightarrow \widetilde{\B}$,  is the localization of $\mathcal{B}$ by {\rm$\mathcal{I}={j^{\ast}}^{-1}(\textrm{Iso}\C)$} such that $j^{\ast}=\widetilde{j^\ast}Q$.
\end{proposition}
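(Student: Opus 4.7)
The plan is to construct $\widetilde{j^\ast}$ from the universal property of the localization $Q:\mathcal{B}\to\widetilde{\mathcal{B}}$ of Theorem \ref{th2-1}, to obtain an adjoint $\widetilde{j_\ast}:\mathcal{C}\to\widetilde{\mathcal{B}}$ via Theorem \ref{prop2-1}, and to show that the induced pair $(\widetilde{j^\ast},\widetilde{j_\ast})$ has both unit and counit natural isomorphisms, forcing $\widetilde{j^\ast}$ to be an equivalence. First I would verify that $\mathcal{I}=(j^\ast)^{-1}(\textrm{Iso}\,\mathcal{C})$ satisfies (M0), (MR1)--(MR4) and $\overline{\mathcal{I}}=p(\mathcal{I})$. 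Closure under composition, identities, finite direct sums and iso-composition is immediate from functoriality of $j^\ast$, and the remaining axioms reduce to the $\s$-exactness of $j^\ast$ combined with homotopy pullbacks/pushouts in $\mathcal{B}$ and the availability of the adjunction units supplied by the $\s$-recollement. Theorem \ref{th2-1} then produces $(\widetilde{\mathcal{B}},Q)$, and its universal property delivers the unique exact functor $\widetilde{j^\ast}:\widetilde{\mathcal{B}}\to\mathcal{C}$ with $\widetilde{j^\ast}\,Q=j^\ast$, since $j^\ast$ inverts every morphism of $\mathcal{I}$ by definition.

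The key observation for the rest of the argument is that every unit arrow $\vartheta_X:X\to j_\ast j^\ast X$ belongs to $\mathcal{I}$: by Lemma \ref{lem2-1}(3) one has $\varepsilon_{j^\ast X}\circ j^\ast(\vartheta_X)=\textrm{id}_{j^\ast X}$, and by Lemma \ref{lem2-1}(2) the counit $\varepsilon$ is a natural isomorphism (since $j_\ast$ is fully faithful), so $j^\ast(\vartheta_X)$ is an isomorphism. I then apply Theorem \ref{prop2-1} to the adjoint pair $(j^\ast,j_\ast)$ with $\mathcal{I}_{\mathcal{B}}=\mathcal{I}$ and $\mathcal{I}_{\mathcal{C}}=\textrm{Iso}\,\mathcal{C}$ (so $\widetilde{\mathcal{C}}=\mathcal{C}$); the hypotheses hold because $j^\ast(\mathcal{I})\subseteq\textrm{Iso}\,\mathcal{C}$ by construction and $j_\ast(\textrm{Iso}\,\mathcal{C})\subseteq\mathcal{I}$ thanks to $j^\ast j_\ast\cong\textrm{id}_{\mathcal{C}}$. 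This yields an adjoint pair $(\widetilde{j^\ast},\widetilde{j_\ast})$ with $\widetilde{j_\ast}(C)=Q(j_\ast C)$ by the commutativity in Corollary \ref{cor2-1}.

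Since $j_\ast$ is fully faithful, Corollary \ref{cor2-2}(1) gives that $\widetilde{j_\ast}$ is fully faithful, so the induced counit $\widetilde{j^\ast}\widetilde{j_\ast}\to\textrm{id}_{\mathcal{C}}$ is a natural isomorphism; in particular $\widetilde{j^\ast}$ is essentially surjective. By the explicit formula for the induced unit exhibited in the proof of Corollary \ref{cor2-2}(2), its component at $Q(X)$ is $\overline{\vartheta_X}/\overline{\textrm{id}_X}=Q(\vartheta_X)$, which is an isomorphism in $\widetilde{\mathcal{B}}$ by the key observation above. Hence the induced unit is also a natural isomorphism, so $\widetilde{j^\ast}$ is an equivalence, and uniqueness is part of the universal property invoked in the first paragraph. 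The principal obstacle I anticipate is the verification of (MR3) and (MR4) for this specific class $\mathcal{I}$, where one has to translate the property ``$j^\ast$ inverts $f$'' into the compatibility and closure conditions required by NOS-style localization; once that is in hand, the remaining arguments are formal consequences of the adjunction-localization machinery of Section \ref{sect2.3}.
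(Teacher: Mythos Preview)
Your route differs from the paper's in a substantive way. The paper does not invoke Theorem~\ref{prop2-1} or Corollary~\ref{cor2-2} at all; instead it cites \cite[Corollary~3.8]{NOS} to obtain the localization and the factorization $j^\ast=\widetilde{j^\ast}Q$ (so the verification of (MR1)--(MR4) for $\mathcal{I}=(j^\ast)^{-1}(\mathrm{Iso}\,\mathcal{C})$ is outsourced rather than redone), then defines a quasi-inverse using $j_!$, namely $\widetilde{j_!}:=Q\circ j_!$. The isomorphism $\widetilde{j^\ast}\widetilde{j_!}\cong\mathrm{id}_{\mathcal{C}}$ comes from $j^\ast j_!\cong\mathrm{id}$, and $\widetilde{j_!}\widetilde{j^\ast}\cong\mathrm{id}_{\widetilde{\mathcal{B}}}$ is obtained by applying $Q$ to the two $\mathbb{E}_{\mathcal{B}}$-triangles produced by Lemma~\ref{lem3-1} and Lemma~\ref{lem3-2} (the right $\mathfrak{s}$-exact sequence of (SR4)) and using $N_{\mathcal{I}}=\mathrm{Ker}\,j^\ast=\mathrm{Im}\,i_\ast$, so that $Q(i_\ast A')=0=Q(i_\ast i^\ast X)$ and hence $Q(j_!j^\ast X)\cong Q(X)$. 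Thus the paper exploits the $\mathfrak{s}$-recollement data directly and never needs any exactness of $j_!$ or $j_\ast$.

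Your argument is more categorical but has a genuine gap as written: Theorem~\ref{prop2-1} requires the right adjoint $G$ to be \emph{exact}, and in a general $\mathfrak{s}$-recollement $j_\ast$ is not assumed exact (only $i_\ast$ and $j^\ast$ are, by (SR1)). So you cannot invoke Theorem~\ref{prop2-1} and Corollary~\ref{cor2-2} for $(j^\ast,j_\ast)$ without an additional hypothesis. The idea is salvageable in this special situation because your target localization is trivial ($\widetilde{\mathcal{C}}=\mathcal{C}$), so one can simply set $\widetilde{j_\ast}:=Q\circ j_\ast$ as an additive functor and check the adjunction and the unit/counit identifications by hand; your key observation $\vartheta_X\in\mathcal{I}$ then finishes the job. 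But that bypass is precisely what you would have to supply, and at that point the argument is no shorter than the paper's direct computation with $j_!$ and the (SR4) triangles. Also note that the paper sidesteps your ``principal obstacle'' entirely by citing \cite[Corollary~3.8]{NOS} for (MR1)--(MR4); you should do the same rather than attempt a from-scratch verification.
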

\begin{proof}
Assume that ($\mathcal{A}$, $\mathcal{B}$, $\mathcal{C}$) is  an $\s$-recollement. Let $\mathcal{I}={j^{\ast}}^{-1}(\textrm{Iso}\C)$. By Corollary \cite[Corollary 3.8]{NOS}, there is a unique exact functor $\widetilde{j^\ast}:\widetilde{\mathcal{B}}\rightarrow \mathcal{C}$ such that $j^{\ast}=\widetilde{j^\ast}Q$, where $Q$ is a composition of canonical quotient functor $p:\B\rightarrow \B/[N_{\mathcal{I}}]$ and localization functor $\overline{Q}: \B/[N_{\mathcal{I}}]\rightarrow \widetilde{\mathcal{B}}$, see \cite{NOS} for the detail. It is clearly that $N_{\mathcal{I}}=\Ker j^{\ast}=\Im i_{\ast}$. Then, $Q(\Im i_{\ast} )=0$.\par
Next, we claim that $\widetilde{j^\ast}$ is an equivalence and its inverse is $\widetilde{j_{!}}=Q\circ j_{!}$. For any $X\in\B$, since $j_{!}$ is fully faithful, we have that
\begin{align*}
  \widetilde{j_{!}}\widetilde{j^\ast}(Q(X)) &=Q(j_{!}\widetilde{j^\ast}(Q(X)))  \\
   &=Q(j_{!}(\widetilde{j^\ast}Q(X)))\\
   &=Q(j_{!}(j^\ast(X)))\\
   &\cong Q(X).
\end{align*}
Moreover, from Lemma \ref{lem3-1} and Lemma \ref{lem3-2}, we have two  $\mathbb{E}_{\mathcal B}$-triangles:
\begin{align*}
  i_\ast A'\rightarrow j_! j^\ast X\rightarrow \textrm{CoCone}(\nu_X)\dashrightarrow\\
\textrm{CoCone}(\nu_X)\rightarrow X\rightarrow i_\ast i^\ast X\dashrightarrow.
\end{align*}
Applying the exact functor $Q$ to the above $\mathbb{E}_{\mathcal B}$-triangles, we have the following $\mathbb{E}_{\widetilde{\B}}$-triangles:
\begin{align*}
  Qi_\ast A'\rightarrow Qj_! j^\ast X\rightarrow Q(\textrm{CoCone}(\nu_X))\dashrightarrow\\
Q(\textrm{CoCone}(\nu_X))\rightarrow Q(X)\rightarrow Qi_\ast i^\ast X\dashrightarrow.
\end{align*}
Since $ Qi_\ast A'=0=Qi_\ast i^\ast X$, we have the isomorphism
$Qj_! j^\ast X\cong Q(X)$. Note that $j^{\ast}=\widetilde{j^\ast}Q$. It follows that $\widetilde{j_! }\widetilde{j^\ast}( Q(X))\cong Q(X)$. It completes the proof.
\end{proof}
\begin{corollary}\label{cor3-2}
 Let {\rm($\mathcal{A}$, $\mathcal{B}$, $\mathcal{C}$)} be  an $\s$-recollement of extriangulated categories. If $\mathcal{A}$, $\mathcal{B}$, and $\mathcal{C}$ are triangulated {\rm(}or, abelian {\rm)} categories, then there is an  equivalence between $\widetilde{\mathcal{B}}$ and $\B/[i_\ast(\mathcal{A})]$, where $\widetilde{\mathcal{B}}$ is the localization of $\mathcal{B}$ by {\rm$\mathcal{I}={j^{\ast}}^{-1}(\textrm{Iso}\C)$} $\widetilde{\mathcal{B}}$.
\end{corollary}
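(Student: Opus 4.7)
The strategy is to use Proposition~\ref{prop3-5} to reduce the problem to verifying that the second step of the Nakaoka--Ogawa--Sakai localization is trivial. By the Remark following Theorem~\ref{th2-1}, the functor $Q\colon\mathcal{B}\to\widetilde{\mathcal{B}}$ factors as $Q=\overline{Q}\circ p$, where $p\colon\mathcal{B}\to\mathcal{B}/[N_{\mathcal{I}}]$ is the ideal quotient and $\overline{Q}$ is the Gabriel--Zisman localization by the multiplicative system $\overline{\mathcal{I}}$. By Lemma~\ref{lem3-2} we have $N_{\mathcal{I}}=\Ker j^\ast=i_\ast(\mathcal{A})$, so $\mathcal{B}/[N_{\mathcal{I}}]=\mathcal{B}/[i_\ast(\mathcal{A})]$. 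Since $\widetilde{j^\ast}\colon\widetilde{\mathcal{B}}\to\mathcal{C}$ is an equivalence by Proposition~\ref{prop3-5}, it is enough to prove that $\overline{Q}$ is an equivalence; equivalently, that every morphism of $\overline{\mathcal{I}}$ is already invertible in $\mathcal{B}/[i_\ast(\mathcal{A})]$.

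In the abelian case, I would first check that for any $f\colon X\to Y$ in $\mathcal{B}$, the condition $f\in [i_\ast(\mathcal{A})](X,Y)$ is equivalent to $\Im(f)\in i_\ast(\mathcal{A})$: the forward direction holds because $\Im(f)$ is a subquotient of any object that $f$ factors through, and $i_\ast(\mathcal{A})=\Ker j^\ast$ is closed under subquotients since $j^\ast$ is exact, while the reverse direction follows from the image factorization $X\twoheadrightarrow\Im(f)\hookrightarrow Y$. This characterization identifies $\mathcal{B}/[i_\ast(\mathcal{A})]$ with the usual Serre quotient $\mathcal{B}/i_\ast(\mathcal{A})$. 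Combining Proposition~\ref{prop3-4} (which upgrades the weak $\s$-recollement to a genuine recollement of abelian categories) with classical Serre quotient theory for abelian recollements, $j^\ast$ induces an equivalence $\mathcal{B}/i_\ast(\mathcal{A})\simeq\mathcal{C}$; in particular every $s\in\mathcal{I}$ is already an isomorphism in $\mathcal{B}/[i_\ast(\mathcal{A})]$, so $\overline{Q}$ is an equivalence.

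In the triangulated case, Corollary~\ref{cor3-1} promotes the $\s$-recollement to a genuine recollement of triangulated categories, and Proposition~\ref{prop3-3} then supplies honest distinguished triangles $j_! j^\ast X\to X\to i_\ast i^\ast X\dashrightarrow$ and $i_\ast i^! X\to X\to j_\ast j^\ast X\dashrightarrow$ for every $X\in\mathcal{B}$. Given $s\colon X\to Y$ in $\mathcal{I}$, the exactness of $j^\ast$ forces $\cone(s)\in\Ker j^\ast=i_\ast(\mathcal{A})$; an inverse to $\overline{s}$ in $\mathcal{B}/[i_\ast(\mathcal{A})]$ is then produced by applying $j_\ast$ to $j^\ast(s)^{-1}$ and transporting it back through the units $X\to j_\ast j^\ast X$ and $Y\to j_\ast j^\ast Y$ via a diagram chase, exploiting the fully faithfulness of $j_\ast$ (SR3); the discrepancy with a genuine two-sided inverse is measured by morphisms that factor through $\cone(s)\in i_\ast(\mathcal{A})$, hence vanish in the ideal quotient. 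The principal obstacle lies precisely here, since, in contrast to the abelian case, the ideal quotient by a thick subcategory is in general strictly coarser than the Verdier quotient; the argument has to genuinely use the semi-orthogonal structure encoded in (SR3) together with the unit/counit triangles of Proposition~\ref{prop3-3} to produce the required cancellation modulo $[i_\ast(\mathcal{A})]$, rather than appealing only to the triangulated formalism.
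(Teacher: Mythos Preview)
Your reading of $\mathcal{B}/[i_\ast(\mathcal{A})]$ as the \emph{ideal} quotient (the first stage $p$ of the Nakaoka--Ogawa--Sakai factorization) is the natural one given the notation in Section~\ref{sect2.3}, but it is not what the paper intends, and under that reading the statement you set out to prove is actually false. The paper's own argument is a two-line composition: Corollary~\ref{cor3-1} (resp.\ Proposition~\ref{prop3-4}) upgrades the $\mathfrak{s}$-recollement to a genuine recollement, whence classical theory gives $\mathcal{C}\simeq \mathcal{B}/i_\ast(\mathcal{A})$ as Verdier (resp.\ Serre) quotient; Proposition~\ref{prop3-5} gives $\widetilde{\mathcal{B}}\simeq\mathcal{C}$; done. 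So in the corollary $\mathcal{B}/[i_\ast(\mathcal{A})]$ is being used loosely for the Verdier/Serre quotient, not for the ideal quotient $\overline{\mathcal{B}}$.

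The concrete gap in your abelian argument is the sentence ``This characterization identifies $\mathcal{B}/[i_\ast(\mathcal{A})]$ with the usual Serre quotient''. Your preceding observation only identifies which morphisms of $\mathcal{B}$ become \emph{zero} in the ideal quotient; it says nothing about the new morphisms the Serre quotient formally adjoins. These two categories are genuinely different even in a recollement situation. For the path algebra of $1\leftarrow 2$ with $i_\ast(\mathcal{A})=\mathrm{add}(S_1)$, the surjection $P_2\twoheadrightarrow S_2$ lies in $\mathcal{I}$ and becomes an isomorphism in the Serre quotient, but $\Hom_{\mathcal{B}}(S_2,P_2)=0$, so there is no candidate inverse in the ideal quotient $\mathcal{B}/[\mathrm{add}(S_1)]$. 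Thus $\overline{Q}$ is \emph{not} an equivalence here, and the programme of showing every $s\in\overline{\mathcal{I}}$ is already invertible in $\mathcal{B}/[i_\ast(\mathcal{A})]$ cannot succeed. The same obstruction kills your triangulated sketch: the map you propose lands in $j_\ast j^\ast X$, and there is in general no morphism $j_\ast j^\ast X\to X$ in $\mathcal{B}$ (the connecting map $j_\ast j^\ast X\to i_\ast i^! X[1]$ need not vanish, since only $i^! j_\ast=0$ holds, not $i^\ast j_\ast=0$), so you cannot manufacture an inverse modulo $[i_\ast(\mathcal{A})]$.
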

\begin{proof} By Corollary \ref{cor3-1} (or, Proposition \ref{prop3-4}), we know that $\C$ is equivalent to $\B/[i_\ast(\mathcal{A})]$. From Proposition \ref{prop3-5}, we get the desired equivalence.
\end{proof}

We end this section with the following result which describes  the localization category $\C$ of $\mathcal{B}$ by perpendicular subcategories.
\begin{lemma}\label{lemma4-2}
Let {\rm($F$, $G$)} be an adjoint pair between extriangulated categories $\A$ and $\B$. The following hold.
\begin{enumerate}
  \item If $G:\B\rightarrow\A$  is exact, then $F$ preserves projective objects.
  \item If $F:\A\rightarrow\B$  is exact, then $G$ preserves injective objects.
\end{enumerate}
\end{lemma}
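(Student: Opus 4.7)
The plan is to argue directly from the defining lifting/extension property of projective (resp.\ injective) objects, using that the adjoint on the other side is exact and hence carries $\E$-triangles to $\E$-triangles. Both parts are formally dual, so I would concentrate on (1) and obtain (2) by the obvious dualization.

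For (1), let $P\in\A$ be projective; I must show that given any $\E_{\B}$-triangle $A\xrightarrow{x} B\xrightarrow{y} C\dashrightarrow$ and any morphism $c\colon F(P)\to C$, there exists $b\colon F(P)\to B$ with $yb=c$. The step that unlocks this is exactness of $G$: applying $G$ to the above $\E_{\B}$-triangle yields an $\E_{\A}$-triangle
\[
G(A)\xrightarrow{G(x)} G(B)\xrightarrow{G(y)} G(C)\dashrightarrow.
\]
Let $\phi_{P,-}\colon \Hom_{\B}(F(P),-)\xrightarrow{\sim}\Hom_{\A}(P,G(-))$ denote the adjunction isomorphism, and set $\tilde c:=\phi_{P,C}(c)\colon P\to G(C)$. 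Projectivity of $P$ then produces $\tilde b\colon P\to G(B)$ with $G(y)\tilde b=\tilde c$. Setting $b:=\phi_{P,B}^{-1}(\tilde b)$, naturality of $\phi$ in the second variable gives $\phi_{P,C}(yb)=G(y)\circ\phi_{P,B}(b)=G(y)\tilde b=\tilde c=\phi_{P,C}(c)$, so $yb=c$. Hence $F(P)$ is projective.

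For (2), the dual argument applies: for $I\in\B$ injective and any $\E_{\A}$-triangle $A\xrightarrow{x} B\xrightarrow{y} C\dashrightarrow$ with a morphism $a\colon A\to G(I)$, exactness of $F$ produces an $\E_{\B}$-triangle $F(A)\xrightarrow{F(x)} F(B)\xrightarrow{F(y)} F(C)\dashrightarrow$; transporting $a$ across the adjunction via $\psi_{-,I}\colon \Hom_{\A}(-,G(I))\xrightarrow{\sim}\Hom_{\B}(F(-),I)$ gives $\psi(a)\colon F(A)\to I$, and injectivity of $I$ yields an extension along $F(x)$, which I pull back through $\psi^{-1}$ and verify via naturality in the first variable.

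The only place to be careful is the naturality step, since one must invoke naturality of the adjunction in the correct variable (second variable for (1), first variable for (2)); after that, no additional properties of the extriangulated structure are needed beyond the definitions of projective/injective objects and the fact that an exact functor preserves $\E$-triangles. I do not anticipate a genuine obstacle here.
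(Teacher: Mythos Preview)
Your proof is correct and is essentially the same as the paper's: both apply the exact functor $G$ to an $\E_{\B}$-triangle, invoke the adjunction isomorphism $\Hom_{\B}(F(P),-)\cong\Hom_{\A}(P,G(-))$ and its naturality, and use projectivity of $P$ to lift along the deflation. The only cosmetic difference is that the paper phrases the conclusion as ``$\Hom_{\B}(F(P),b)$ is epic'' via the commutative square of Hom-sets, whereas you carry out the equivalent element-level lift explicitly.
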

\begin{proof} We just prove (1) and (2) can be obtained by dual arguments.

 (1) This proof is similar to \cite[Lemma 3.3 (3)]{WWZ}. For convenience, we give the details. \par
 Let $P$ be a projective object of $\A$. Assume that
 $$X\xrightarrow{a} Y\xrightarrow{b} Z\dashrightarrow$$
 is an arbitrary $\E_{\B}$-triangle. As $G:\B\rightarrow\A$  is exact, we have an  $\E_{\A}$-triangle
 $$G(X)\xrightarrow{G(a)} G(Y)\xrightarrow{G(b)} G(Z)\dashrightarrow.$$
 Applying the $\Hom_{\B}(F(P),-)$ to the above $\E_{\A}$-triangle, by adjointness of ($F$, $G$), we have the following commutative diagram
 $$\xymatrix{\Hom_{\B}(F(P),Y)\ar[rr]^{\Hom_{\B}(F(P),b)}\ar[d]_{\eta_{P,Y}}&& \Hom_{\B}(F(P),Z) \ar[d]^{\eta_{P,Z}} \\
 \Hom_{\A}(P,G(Y))\ar[rr]^{ \Hom_{\A}(P,G(b))}&&\Hom_{\A}(P,G(Z))  }$$
 where the vertical  arrows are isomorphisms.
Since $P$ is projective, $\Hom_{\A}(P,G(b))$ is epic. Note that $\eta_{P,Y}$ and $\eta_{P,Z}$ are isomorphisms. Then, $\Hom_{\B}(F(P),b)$ is epic. Therefore, $F(P)$ is a projective object of $\B$.
\end{proof}

\begin{proposition}\label{prop3-7}
Let {\rm($\mathcal{A}$, $\mathcal{B}$, $\mathcal{C}$)} be an $\s$-recollement of extriangulated categories. Assume that $\C$ has enough projective and injective objects. Set
{\rm\begin{align*}
\mathcal{X}=&\Big\{B\in\B\Big|
\begin{array}{c}
  ~\exists ~\text{a ~right~ $\s$-exact~sequence}~j_{!}(P_{1})\rightarrow j_{!}(P_{0})\xrightarrow{f} B~\text{in}~\B, \\
  ~{\rm where}~P_{i}\in\proj\C, ~\text{CoCone}(f)\in \Ker\Hom_{\B}(-,i_{\ast}(\A))
\end{array}\Big\} \\
\mathcal{Y}=&\Big\{B\in\B\Big|
\begin{array}{c}
  \exists ~\text{a ~left~ $\s$-exact~sequence}~B \xrightarrow{g} j_{\ast}(I_{0})\rightarrow j_{\ast}(I_{1}) ~\text{in}~\B, \\
  ~{\rm where}~I_{i}\in\inj\C, ~\text{Cone}(g)\in \Ker\Hom_{\B}(i_{\ast}(\A),-)
\end{array}\Big\}.
\end{align*}}
 Then the following hold.
\begin{enumerate}
  \item If $j_{!}$ is right $\s$-exact, then {\rm$\mathcal{X}\bigcap\Ker\Hom_{\B}(-,i_{\ast}(\A))={^{_{0}\perp_{1}}}i_{\ast}(\A)$} and there is an equivalence ${^{_{0}\perp_{1}}}i_{\ast}(\A)\rightarrow \C$, where {\rm${^{_{0}\perp_{1}}}i_{\ast}(\A)=\Ker\Hom_{\B}(-,i_{\ast}(\A))\bigcap\Ker\E_{\B}(-,i_{\ast}(\A))$}.
  \item If $j_{\ast}$ is left $\s$-exact, then {\rm$\mathcal{Y}\bigcap\Ker\Hom_{\B}(i_{\ast}(\A),-)=i_{\ast}(\A){^{_{0}\perp_{1}}}$} and there is an equivalence $i_{\ast}(\A){^{_{0}\perp_{1}}}\rightarrow \C$, where {\rm$i_{\ast}(\A){^{_{0}\perp_{1}}}=\Ker\Hom_{\B}(i_{\ast}(\A),-)\bigcap\Ker\E_{\B}(i_{\ast}(\A),-)$}.
\end{enumerate}
\end{proposition}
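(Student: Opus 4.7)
I will carry out part (1) in detail; part (2) follows by dual arguments, using the left $\s$-exact hypothesis on $j_{\ast}$, the exactness of $j^{\ast}$, and the fact (Lemma~\ref{lemma4-2}(2)) that $j_{\ast}$ preserves injectives. The backbone of part (1) is the $(j_{!}, j^{\ast})$ adjunction together with exactness of $j^{\ast}$: Lemma~\ref{lemma4-2}(1) implies $j_{!}$ carries $\proj\C$ into the projectives of $\B$, while adjunction gives $\Hom_{\B}(j_{!}(-), i_{\ast}(\A)) \cong \Hom_{\C}(-, j^{\ast}i_{\ast}(\A)) = 0$ identically.

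For the inclusion $\mathcal{X}\cap\Ker\Hom_{\B}(-,i_{\ast}(\A))\subseteq {^{_{0}\perp_{1}}}i_{\ast}(\A)$, I take $B$ in the left-hand side with a witnessing right $\s$-exact sequence $j_{!}(P_{1})\to j_{!}(P_{0})\xrightarrow{f}B$ satisfying $\textrm{CoCone}(f)\in\Ker\Hom_{\B}(-,i_{\ast}(\A))$. Unfolding right $\s$-exactness produces an $\E_{\B}$-triangle $\textrm{CoCone}(f)\to j_{!}(P_{0})\to B\dashrightarrow$. The long exact sequence obtained by applying $\Hom_{\B}(-,i_{\ast}(A''))$ has its first two $\Hom$-terms killed by the adjunction identity above and its $\E_{\B}(j_{!}(P_{0}),i_{\ast}(A''))$ term killed by projectivity of $j_{!}(P_{0})$, forcing $\E_{\B}(B,i_{\ast}(A''))=0$.

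For the reverse inclusion, given $B\in {^{_{0}\perp_{1}}}i_{\ast}(\A)$, I invoke Lemma~\ref{lem3-1} together with Lemma~\ref{lem3-2} to produce a $4$-term $\s$-exact sequence $i_{\ast}(A')\to j_{!}j^{\ast}(B)\to B\xrightarrow{\nu_{B}}i_{\ast}i^{\ast}(B)$. The hypothesis $\Hom_{\B}(B,i_{\ast}(\A))=0$ combined with the $(i^{\ast},i_{\ast})$ adjunction forces $\nu_{B}$ to correspond to $\id_{i^{\ast}(B)}=0$, whence $i_{\ast}i^{\ast}(B)=0$; the second constituent $\E_{\B}$-triangle collapses, leaving $i_{\ast}(A')\to j_{!}j^{\ast}(B)\to B\dashrightarrow$. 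Applying $\Hom_{\B}(-,i_{\ast}(A''))$ to this triangle and using both the $\Hom$- and $\E$-vanishing hypotheses at $B$ yields $\Hom_{\A}(A',A'')=0$ for all $A''\in\A$ (via fully faithfulness of $i_{\ast}$), so $A'=0$ and $j_{!}j^{\ast}(B)\cong B$. Finally, taking a projective presentation $P_{1}\to P_{0}\to j^{\ast}(B)$ in $\C$ and applying the right $\s$-exact functor $j_{!}$ (using that compositions of deflations remain deflations) transports, via the isomorphism $j_{!}j^{\ast}(B)\cong B$, to the required right $\s$-exact resolution of $B$; the cocone lies in $\Ker\Hom_{\B}(-,i_{\ast}(\A))$ by another long-exact-sequence argument on the resulting $\E_{\B}$-triangle.

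To produce the equivalence ${^{_{0}\perp_{1}}}i_{\ast}(\A)\to\C$, I restrict $j^{\ast}$ and take $j_{!}$ as putative inverse. Fully faithfulness of $j_{!}$ gives $j^{\ast}j_{!}\cong\id_{\C}$, and the already-established isomorphism $j_{!}j^{\ast}(B)\cong B$ gives the opposite composite, provided $j_{!}(X)\in {^{_{0}\perp_{1}}}i_{\ast}(\A)$ for every $X\in\C$. The $\Hom$-vanishing is the adjunction identity recorded above; for the $\E$-vanishing, any extension $i_{\ast}(A'')\to E\xrightarrow{p} j_{!}(X)\dashrightarrow$ becomes, after applying the exact functor $j^{\ast}$, a triangle $0\to j^{\ast}(E)\xrightarrow{j^{\ast}(p)} j^{\ast}j_{!}(X)\dashrightarrow$ whose extension class is automatically zero and whose middle arrow is an isomorphism; transporting $(j^{\ast}(p))^{-1}\circ\eta_{X}$ along the $(j_{!}, j^{\ast})$ adjunction yields a section of $p$, so the extension splits. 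The main obstacle, I expect, is the reverse inclusion: bridging the abstract orthogonality condition and the existence of a concrete right $\s$-exact $j_{!}$-resolution, while carefully handling that $j_{!}$ is only right $\s$-exact and not necessarily exact.
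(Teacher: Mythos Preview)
Your proof is correct, and in a couple of places cleaner and more complete than the paper's.

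The forward inclusion is essentially identical to the paper's. For the reverse inclusion the two arguments diverge. The paper does not first conclude $i_{\ast}i^{\ast}(B)=0$; it only uses $\nu_{B}=0$ together with (ET3)$^{\mathrm{op}}$ to show that the map $\mathrm{CoCone}(\nu_{B})\to B$ is a split epimorphism, hence a deflation, and then builds the deflation $j_{!}(P_{0})\to B$ as the composite $\upsilon_{B}\circ j_{!}(\alpha)$, iterating to obtain $K_{0}$ and $j_{!}(P_{1})\to K_{0}$. You instead observe that $\nu_{B}=0$ forces $\mathrm{id}_{i^{\ast}(B)}=0$ via adjunction, so the second constituent triangle collapses and you get the $\E_{\B}$-triangle $i_{\ast}(A')\to j_{!}j^{\ast}(B)\to B\dashrightarrow$ directly; then the long exact sequence kills $A'$ and you transport a projective presentation along the isomorphism $j_{!}j^{\ast}(B)\cong B$. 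This is a genuine simplification: it proves the key isomorphism $\upsilon_{B}$ once and reuses it both for constructing the resolution and for the equivalence, whereas the paper reproves it in the equivalence step.

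For the equivalence itself, the paper only shows that the counit $\upsilon_{B}$ is an isomorphism for $B\in{}^{_{0}\perp_{1}}i_{\ast}(\A)$ and then asserts the equivalence; it does not explicitly verify that $j_{!}(\C)\subseteq{}^{_{0}\perp_{1}}i_{\ast}(\A)$, which is needed for essential surjectivity. Your splitting argument for $\E_{\B}(j_{!}(X),i_{\ast}(A''))=0$ fills exactly this gap.
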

\begin{proof}
(1) Let $B\in\mathcal{X}\bigcap\Ker\Hom_{\B}(-,i_{\ast}(\A))$. Then there is an $\E_{\B}$-triangle
$$\text{CoCone}(f)\rightarrow j_{!}(P_{0})\xrightarrow{f} B\dashrightarrow$$
with $\text{CoCone}(f)\in \Ker\Hom_{\B}(-,i_{\ast}(\A))$. For any $A\in\A$, applying the functor $\Hom_{\B}(-,i_{\ast}(A))$ to the above $\E_{\B}$-triangle, we have the exact sequence
$$\rightarrow\Hom_{\B}(j_{!}(P_{0}),i_{\ast}(A))\rightarrow
\Hom_{\B}(\text{CoCone}(f),i_{\ast}(A))\rightarrow \E_{\B}(B,i_{\ast}(A))\rightarrow \E_{\B}(j_{!}(P_{0}),i_{\ast}(A))\rightarrow$$
By Lemma \ref{lemma4-2}, $j_{!}(P_{0})$ is projective. It yields that $\E_{\B}(j_{!}(P_{0}),i_{\ast}(A))=0$. Using the adjoint pair ($j_{!}$,$j^{\ast}$), we infer that $\Hom_{\B}(j_{!}(P_{0}),i_{\ast}(A))\cong\Hom_{\C}(P_{0},j^{\ast}i_{\ast}(A))=0$ since $j^{\ast}i_{\ast}(A)=0$. It implies that $\E_{\B}(B,i_{\ast}(A))\cong\Hom_{\B}(\text{CoCone}(f),i_{\ast}(A))=0$. Thus, $\mathcal{X}\bigcap\Ker\Hom_{\B}(-,i_{\ast}(\A))\subseteq{^{_{0}\perp_{1}}}i_{\ast}(\A)$.

Now, suppose $B\in {^{_{0}\perp_{1}}}i_{\ast}(\A)$. Then $j^{\ast}(B)\in\C$. Since $\C$ has enough projective  objects, there is a deflation $\alpha:P_{0}\rightarrow j^{\ast}(B)$, where $P_{0}\in\proj\C$. Since $j_{!}$ is right $\s$-exact, $j_{!}(\alpha):j_{!}(P_{0})\rightarrow j_{!}j^{\ast}(B)$ is a deflation. By (SR4) and Lemma \ref{lem3-2}, there are two $\E_{\B}$-triangle
\begin{align*}
 &i_{\ast}(A')\rightarrow j_! j^\ast (B)\xrightarrow{g} \textrm{CoCone}(\nu_B)\dashrightarrow\\
&\textrm{CoCone}(\nu_B)\xrightarrow{f} B\xrightarrow{\nu_B} i_\ast i^\ast (B)\dashrightarrow
\end{align*}
where $fg=\upsilon_B$ is a counit map. Note that $\Hom_{\B}(B,i_{\ast}(\A))=0$. We infer that $\nu_B=0$. Thus, by (ET3)$^{\textrm{op}}$, we have the following commutative diagram
$$\xymatrix{ B\ar[r]^{1}\ar@{-->}[d]^{\beta}&B\ar[d]^{1}\ar[r]&0\ar@{-->}[r]\ar[d]^{0}&\\
\textrm{CoCone}(\nu_B)\ar[r]^-{f}& B\ar[r]^{0} &i_\ast i^\ast (B)\ar@{-->}[r]&               }$$
Thus, $f\beta=1$ and hence $f$ is a deflation since $f$ is a retraction. It yields that $\upsilon_B=fg $ is a deflation by \cite[Remark 2.16]{NP}. Thus, $\upsilon_B j_{!}(\alpha):j_{!}(P_{0})\rightarrow B$ is a deflation. Then there is an $\E_{\B}$-triangle
$$K_{0}\xrightarrow{\lambda} j_{!}(P_{0})\xrightarrow{\upsilon_B j_{!}(\alpha)} B\dashrightarrow.$$
where $K_{0}=\textrm{CoCone}(\upsilon_B j_{!}(\alpha))$.
Applying the functor $\Hom_{\B}(-,i_{\ast}(A))$ to the above $\E_{\B}$-triangle, we have the exact sequence
$$\rightarrow\Hom_{\B}(j_{!}(P_{0}),i_{\ast}(A))\rightarrow
\Hom_{\B}(K_{0},i_{\ast}(A))\rightarrow \E_{\B}(B,i_{\ast}(A))\rightarrow \E_{\B}(j_{!}(P_{0}),i_{\ast}(A))\rightarrow$$
It is easy to see that $\Hom_{\B}(j_{!}(P_{0}),i_{\ast}(A))=\E_{\B}(B,i_{\ast}(A))=0$. Thus, $K_{0}\in\Ker\Hom_{\B}(-,i_{\ast}(A))$. Repeating the above arguments, there is a deflation $\gamma:j_{!}(P_{1})\rightarrow K_{0}$. Thus, there is a right $\s$-exact sequence $j_{!}(P_{1})\xrightarrow{\lambda\gamma}j_{!}(P_{0})\xrightarrow{\upsilon_B j_{!}(\alpha)}B\dashrightarrow  $ with $\textrm{CoCone}(\upsilon_B j_{!}(\alpha))\in\Ker\Hom_{\B}(-,i_{\ast}(A))$. Therefore, $B\in\mathcal{X}\bigcap\Ker\Hom_{\B}(-,i_{\ast}(\A))$ and so, ${^{_{0}\perp_{1}}}i_{\ast}(\A)\subseteq \mathcal{X}\bigcap\Ker\Hom_{\B}(-,i_{\ast}(\A))$.\par
Next, we show that $j^{\ast}$ induces the equivalence ${^{_{0}\perp_{1}}}i_{\ast}(\A)\rightarrow \C$.\par
Assume that $B\in{^{_{0}\perp_{1}}}i_{\ast}(\A)$. Then, from the above arguments, there is an $\E_{\B}$-triangle
$$i_{\ast}(A')\rightarrow j_! j^\ast (B)\xrightarrow{\upsilon_B} B\dashrightarrow$$
Since $\E_{\B}(B,i_{\ast}(\A))=0$, the above $\E_{\B}$-triangle is split and hence, $j_! j^\ast (B)\cong B\oplus i_{\ast}(A')$. Moreover, applying $\Hom_{\B}(-,i_{\ast}(A'))$ to this split $\E_{\B}$-triangle, we have the following exact sequence
$$\rightarrow\Hom_{\B}(j_! j^\ast (B),i_{\ast}(A'))\rightarrow
\Hom_{\B}(i_{\ast}(A'),i_{\ast}(A'))\rightarrow \E_{\B}(B,i_{\ast}(A'))\rightarrow$$
Note that $\Hom_{\B}(j_! j^\ast (B),i_{\ast}(A'))\cong\Hom_{\C}( j^\ast (B),j^{\ast}i_{\ast}(A'))=0 $ and $\E_{\B}(B,i_{\ast}(A'))=0$ since $j^{\ast}i_{\ast}=0$ and $\E_{\B}(B,i_{\ast}(\A))=0$. Thus, $\Hom_{\B}(i_{\ast}(A'),i_{\ast}(A'))=0$ and so $i_{\ast}(A')=0$. Therefore, $j_! j^\ast (B)\cong B$. It implies that $j^\ast:{^{_{0}\perp_{1}}}i_{\ast}(\A)\rightarrow \C$ is an equivalence.\par
(2) can be proved by similar arguments.
\end{proof}
\begin{corollary}{\rm(\cite[Proposition 3.2]{P})}
Let {\rm($\mathcal{A}$, $\mathcal{B}$, $\mathcal{C}$)} be a recollement of abelian categories. Assume that $\C$ has enough projective and injective objects. Set
{\rm\begin{align*}
\mathcal{X'}=&\{B\in\B|~\exists ~\text{a ~right~ exact~sequence}~j_{!}(P_{1})\rightarrow j_{!}(P_{0})\xrightarrow{f} B~\text{in}~\B,~{\rm where}~P_{i}\in\proj\C\} \\
\mathcal{Y'}=&\{B\in\B|~\exists ~\text{a ~left~exact~sequence}~B \xrightarrow{g} j_{\ast}(I_{0})\rightarrow j_{\ast}(I_{1}) ~\text{in}~\B,
  ~{\rm where}~I_{i}\in\inj\C \}\\
{^{_{0}\perp_{1}}}i_{\ast}(\A)&=\Ker\Hom_{\B}(-,i_{\ast}(\A))\bigcap\Ker\E_{\B}(-,i_{\ast}(\A))
\\i_{\ast}(\A){^{_{0}\perp_{1}}}&=\Ker\Hom_{\B}(i_{\ast}(\A),-)\bigcap\Ker\E_{\B}(i_{\ast}(\A),-).
\end{align*}}
 Then the following hold.
\begin{enumerate}
  \item {\rm$\mathcal{X'}={^{_{0}\perp_{1}}}i_{\ast}(\A)$} and there is an equivalence $\mathcal{X'}\rightarrow \C$.
  \item  {\rm$\mathcal{Y'}=i_{\ast}(\A){^{_{0}\perp_{1}}}$} and there is an equivalence $\mathcal{Y'}\rightarrow \C$.
\end{enumerate}
\end{corollary}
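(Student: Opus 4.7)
The plan is to deduce this corollary as the abelian specialization of Proposition~\ref{prop3-7}, realizing $\mathcal{X}'$ and $\mathcal{Y}'$ as the abelian shadows of the sets $\mathcal{X}$ and $\mathcal{Y}$ already treated there.

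First I will verify that the hypotheses of Proposition~\ref{prop3-7} hold. Any recollement of abelian categories comes equipped with short exact sequences
\[0\to i_\ast i^! X\to X\to j_\ast j^\ast X\to 0\quad\text{and}\quad 0\to j_! j^\ast X\to X\to i_\ast i^\ast X\to 0,\]
which are in particular left $\s$-exact and right $\s$-exact of the form required by (SR4); exactness further yields (SR5) since $i^\ast(X)=j^\ast(X)=0$ forces $X=0$ via the second sequence, so $(\A,\B,\C)$ is an $\s$-recollement. Moreover, since $j^\ast$ is exact and $(j_!,j^\ast,j_\ast)$ is an adjoint triple, $j_!$ is right exact and $j_\ast$ is left exact, and both Proposition~\ref{prop3-7}(1) and (2) are applicable.

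Next I will identify $\mathcal{X}'$ with $\mathcal{X}\cap\Ker\Hom_\B(-,i_\ast(\A))$. The inclusion $\supseteq$ is immediate because in an abelian category right $\s$-exactness of a three-term sequence coincides with ordinary right exactness. For $\subseteq$, given $B\in\mathcal{X}'$ with presentation $j_!(P_1)\xrightarrow{d} j_!(P_0)\xrightarrow{f} B\to 0$, the adjunction $(j_!,j^\ast)$ together with (SR2) forces
\[\Hom_\B(j_!(P_k),i_\ast(A))\cong\Hom_\C(P_k,j^\ast i_\ast(A))=0\]
for every $A\in\A$ and $k\in\{0,1\}$. Since $f$ is epic, any map $B\to i_\ast(A)$ lifts to $j_!(P_0)$ and therefore vanishes, so $B\in\Ker\Hom_\B(-,i_\ast(\A))$; since $j_!(P_1)\twoheadrightarrow\ker f=\cocone(f)$ is epic, the same reasoning gives $\cocone(f)\in\Ker\Hom_\B(-,i_\ast(\A))$, which places $B$ in $\mathcal{X}$.

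With this identification in hand, Proposition~\ref{prop3-7}(1) immediately delivers $\mathcal{X}'={^{_{0}\perp_{1}}}i_\ast(\A)$ and the equivalence $j^\ast\colon\mathcal{X}'\xrightarrow{\sim}\C$, yielding (1). Statement (2) follows by the dual argument, using the adjunction $(j^\ast,j_\ast)$, (SR2), the injective analogue of the Hom-vanishing calculation, and Proposition~\ref{prop3-7}(2). The only mildly subtle point, and hence the main obstacle, is checking that the $\cocone$ (respectively $\cone$) vanishing conditions appearing in the definitions of $\mathcal{X}$ and $\mathcal{Y}$ are automatic in the abelian setting; the brief adjunction argument above handles both.
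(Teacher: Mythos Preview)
Your approach matches the paper's: both reduce to Proposition~\ref{prop3-7} by showing $\mathcal{X}'=\mathcal{X}\cap\Ker\Hom_\B(-,i_\ast(\A))$ via the adjunction isomorphism $\Hom_\B(j_!(P_k),i_\ast A)\cong\Hom_\C(P_k,j^\ast i_\ast A)=0$ together with the epimorphisms $j_!(P_0)\twoheadrightarrow B$ and $j_!(P_1)\twoheadrightarrow\ker f$, and then invoke the proposition. One caveat: in a general recollement of abelian categories the adjunction sequences are only left/right exact, \emph{not} short exact (for instance the counit $j_!j^\ast X\to X$ can have nonzero kernel lying in $i_\ast(\A)$); this inaccuracy does not affect your verification of (SR5) or the main argument, and indeed the paper sidesteps these preliminaries entirely, simply invoking Proposition~\ref{prop3-7} directly.
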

\begin{proof}It is enough to show that $\mathcal{X}\bigcap\Ker\Hom_{\B}(-,i_{\ast}(\A))=\mathcal{X'}$. Clearly, one can see that  $\mathcal{X}\bigcap\Ker\Hom_{\B}(-,i_{\ast}(\A))\subseteq\mathcal{X'}$. Next, for any $B\in\mathcal{X'}$, there are two short exact sequences
\begin{align*}
0\rightarrow K_{1}\rightarrow j_{!}(P_{1})\rightarrow K_{0}\rightarrow 0 \\
0\rightarrow K_{0}\rightarrow j_{!}(P_{0})\xrightarrow{f} B\rightarrow 0
\end{align*}
where $K_{0}=\Ker f$. For any $A\in\A$, applying the functor $\Hom_{\B}(-,i_{\ast}(\A))$ to them, we have the following exact sequences
\begin{align*}
&0\rightarrow \Hom_{\B}(K_{0},i_{\ast}(\A))\rightarrow\Hom_{\B}(j_{!}(P_{1}),i_{\ast}(\A)) \rightarrow \Hom_{\B}(K_{1},i_{\ast}(\A))\rightarrow \\
&0\rightarrow \Hom_{\B}(B,i_{\ast}(\A))\rightarrow \Hom_{\B}(j_{!}(P_{0}),i_{\ast}(\A))\rightarrow \Hom_{\B}(K_{0},i_{\ast}(\A))\rightarrow
\end{align*}
Using the adjoint pair ($j_{!}$, $j^{\ast}$), we have that $\Hom_{\B}(j_{!}(P_{1}),i_{\ast}(\A))\cong\Hom_{\C}(P_{1},j^{\ast}i_{\ast}(\A))=0$ since $j^{\ast}i_{\ast}=0$. Thus, $\Hom_{\B}(K_{0},i_{\ast}(\A))=0$. Similarly, $\Hom_{\B}(B,i_{\ast}(\A))=0$. This completes the proof. By Proposition \ref{prop3-7} (1), we show the statement (1).

 Finally, similar arguments prove the statement (2).
\end{proof}
\section{Applications}

\begin{definition}\cite{NOS}
Let $(\mathcal{A},\mathbb{E},\mathfrak{s})$  be an extriangulated category
\begin{enumerate}
  \item A full additive subcategory $\mathcal{N}\subseteq \A$ is called a \textit{thick} subcategory if it satisfies the following conditions.
\begin{enumerate}
  \item $\mathcal{N}\subseteq \A$ is closed by isomorphisms and direct summands.
  \item $\mathcal{N}$ satisfies 2-out-of-3 for conflations. Namely, if any two of objects $A$, $B$, $C$ in an conflation $A\xrightarrow{x} B \xrightarrow{y} C$ belong to $\mathcal{N}$, then so does the third.
\end{enumerate}
  \item A thick subcategory $\mathcal{N}\subseteq \A$ is called \textrm{biresolving}, if for any $A\in\A$
there exists an inflation $C \rightarrow N$ and a deflation $N' \rightarrow C$ for some $N$, $N'\in \mathcal{N}$.
\end{enumerate}
\end{definition}
\begin{definition}\cite{NOS}
Let $(\mathcal{A},\mathbb{E},\mathfrak{s})$  be an extriangulated category. For a thick subcategory $\mathcal{N}\subseteq \A$, we associate the following classes
of morphisms.
\begin{align*}
  \mathcal{R} &= \{f\in\M|~f~\text{is~an~inflation~with}~ \textrm{Cone(f)}\in\mathcal{N}\} \\
  \mathcal{L} &=\{f\in\M|~f~\text{is~a~deflation~with}~ \textrm{CoCone(f)}\in\mathcal{N}\}
\end{align*}
Define $\I_{\mathcal{N}}\subseteq \M$ to be the smallest subset closed by compositions containing
both $\mathcal{R}$ and $\mathcal{L}$. It is obvious that $\I_{\mathcal{N}}$ satisfies condition (M0) in Section \ref{sect2.3}.
\end{definition}
\begin{remark}\cite{NOS}
$\I_{\mathcal{N}}$ coincides with the set of all finite compositions of morphisms in $\mathcal{R}$ and $\mathcal{L}$. Moreover, $\mathcal{N}_{\I_{\mathcal{N}}}=\mathcal{N}$, see \cite[Lemma 4.5]{NOS}.
\end{remark}
\begin{lemma}\label{lemm4-1}
Let $\mathcal{A}$ and $\B$  be  extriangulated categories. Assume that $\mathcal{N}_{1}$ and $\mathcal{N}_{2}$ are thick subcategories of $\mathcal{A}$ and $\B$, respectively. If the exact functor $F$ satisfies $F(\mathcal{N}_{1})\subseteq \mathcal{N}_{2}$, then $F(\I_{\mathcal{N}_{1}})\subseteq \I_{\mathcal{N}_{2}}$.
\end{lemma}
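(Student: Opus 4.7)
The plan is to reduce the statement to a one-step check by means of the characterization of $\I_{\mathcal{N}}$ given in the Remark: every morphism in $\I_{\mathcal{N}}$ is a finite composition of morphisms from $\mathcal{R}$ and $\mathcal{L}$. So it suffices to verify that $F$ sends $\mathcal{R}_{\mathcal{N}_1}$ into $\mathcal{R}_{\mathcal{N}_2}$ and $\mathcal{L}_{\mathcal{N}_1}$ into $\mathcal{L}_{\mathcal{N}_2}$, after which closure under composition on the target side finishes the argument.

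For the first half, take $f \in \mathcal{R}_{\mathcal{N}_1}$, i.e.\ an inflation $f \colon X \to Y$ completing to a conflation $X \xrightarrow{f} Y \xrightarrow{g} C$ with $C \in \mathcal{N}_1$, realizing some $\E_{\A}$-extension $\delta$. Since $F$ is an exact functor, the natural transformation $\eta_{(C,X)}(\delta) \in \E_{\B}(F(C),F(X))$ is realized by $F(X) \xrightarrow{F(f)} F(Y) \xrightarrow{F(g)} F(C)$, which is therefore a conflation in $\B$. In particular $F(f)$ is an inflation, and by hypothesis $F(C) \in F(\mathcal{N}_1) \subseteq \mathcal{N}_2$, hence $F(f) \in \mathcal{R}_{\mathcal{N}_2} \subseteq \I_{\mathcal{N}_2}$. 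A dual argument handles $\mathcal{L}_{\mathcal{N}_1}$: if $f$ is a deflation with $\textrm{CoCone}(f) \in \mathcal{N}_1$, then $F$ preserves the associated conflation, so $F(f)$ is a deflation with cocone $F(\textrm{CoCone}(f)) \in \mathcal{N}_2$.

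Finally, let $f \in \I_{\mathcal{N}_1}$ be arbitrary. By the Remark, $f = f_n \circ \cdots \circ f_1$ with each $f_i \in \mathcal{R}_{\mathcal{N}_1} \cup \mathcal{L}_{\mathcal{N}_1}$. Applying $F$ yields $F(f) = F(f_n) \circ \cdots \circ F(f_1)$, and by the previous paragraph each $F(f_i) \in \mathcal{R}_{\mathcal{N}_2} \cup \mathcal{L}_{\mathcal{N}_2} \subseteq \I_{\mathcal{N}_2}$. Because $\I_{\mathcal{N}_2}$ is closed under composition (condition (M0)), we conclude $F(f) \in \I_{\mathcal{N}_2}$, which is the desired inclusion.

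There is no genuine obstacle here; the only point requiring care is to invoke exactness of $F$ in the correct form (i.e.\ in the sense of Definition recalled from \cite{B-TS}, which guarantees that the image of a conflation under $F$ is again a conflation, not just a right or left $\s$-exact sequence). Once that is used, preservation of cones and cocones is automatic, and the characterization from the Remark makes the induction on composition length trivial.
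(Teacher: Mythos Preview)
Your proof is correct and follows essentially the same approach as the paper: both show $F(\mathcal{R}_{\mathcal{N}_1})\subseteq\mathcal{R}_{\mathcal{N}_2}$ and $F(\mathcal{L}_{\mathcal{N}_1})\subseteq\mathcal{L}_{\mathcal{N}_2}$ using exactness of $F$, and then invoke the description of $\I_{\mathcal{N}}$ as finite compositions from the Remark together with closure of $\I_{\mathcal{N}_2}$ under composition.
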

\begin{proof}Let $f\in\Hom_{\A}(X,Y) $. If $f\in\mathcal{R}_{\A}$, then  there is a $\E_{A}$-triangle $X\xrightarrow{f}Y\rightarrow \textrm{Cone}(f)\dashrightarrow$. Applying the exact functor $F$ to this triangle, we have the $\E_{B}$-triangle $F(X)\xrightarrow{F(f)}F(Y)\rightarrow F(\textrm{Cone}(f))\dashrightarrow$. Thus, $\textrm{Cone}(F(f))\cong F(\textrm{Cone}(f))\in\mathcal{N}_{2}$ since $F(\mathcal{N}_{1})\subseteq \mathcal{N}_{2}$ and $\textrm{Cone}(f)\in \mathcal{N}_{1}$. Thus, $F(\mathcal{R}_{\A})\subseteq \mathcal{R}_{\B}$. Dually, we can prove that $F(\mathcal{L}_{\A})\subseteq \mathcal{L}_{\B}$. Note that $\I_{\mathcal{N}_{1}}$ coincides with the set of all finite compositions of morphisms in $\mathcal{R}_{\A}$ and $\mathcal{L}_{\A}$. Hence, for any $h\in\I_{\mathcal{N}_{1}}$, $F(h)\in\I_{\mathcal{N}_{2}}$.
\end{proof}
\begin{proposition}{\rm\cite{NOS}}\label{prop4-0}
Let $\mathcal{A}$  be an extriangulated category satisfying {\rm(WIC)} condition, or $\mathcal{A}$  be an exact category. If  $\mathcal{N}$ is a biresolving thick subcategory of $\A$, then the following hold.
\begin{enumerate}
  \item $\I_{\mathcal{N}}=\mathcal{R}\circ\mathcal{L}$.
  \item $p(\I_{\mathcal{N}})=\overline{\I_{\mathcal{N}}}$.
  \item $\I_{\mathcal{N}}$ satisfies {\rm(MR1), . . . , (MR4)}.
  \item The localization of $\A$ by $\I_{\mathcal{N}}$ corresponds to a
triangulated category.
\end{enumerate}
\end{proposition}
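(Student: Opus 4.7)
This proposition is quoted from \cite{NOS}; my plan sketches how I would reconstruct the argument.

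\textbf{Part (1).} The inclusion $\mathcal{R}\circ\mathcal{L}\subseteq \I_{\mathcal{N}}$ is immediate from $\mathcal{R},\mathcal{L}\subseteq\I_{\mathcal{N}}$ and closure of $\I_{\mathcal{N}}$ under composition. The load-bearing step for the reverse is the swap $\mathcal{L}\circ\mathcal{R}\subseteq\mathcal{R}\circ\mathcal{L}$. Given $r\in\mathcal{R}$ with $\cone(r)=N_1\in\mathcal{N}$ and $\ell\in\mathcal{L}$ with $\cocone(\ell)=N_2\in\mathcal{N}$, fitting into $\E$-triangles $X\to Y\to N_1\dashrightarrow$ and $N_2\to Y\to Z\dashrightarrow$, I would apply (ET4) (and its dual) to produce a commutative diagram with a new object $W$ factoring $\ell r$ as $X\to W\to Z$. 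Thickness of $\mathcal{N}$ together with (WIC) (or the usual axioms in the exact case) forces $X\to W$ to land in $\mathcal{R}$ (its cone is isomorphic to $N_2$) and $W\to Z$ in $\mathcal{L}$ (its cocone is isomorphic to $N_1$). Iterating this swap reduces any word in $\mathcal{R}\cup\mathcal{L}$ to the normal form $r\ell$.

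\textbf{Parts (2) and (3).} Since $\I_{\mathcal{N}}$ contains $\textrm{Iso}\A$ by (M0) and is composition-closed, $p(\I_{\mathcal{N}})$ is already stable under composition with projections of isomorphisms of $\A$; hence $\overline{\I_{\mathcal{N}}}=p(\I_{\mathcal{N}})$, giving (2). For (3): (MR1) follows by putting morphisms in $r\ell$-normal form and invoking 2-out-of-3 for the thick subcategory $\mathcal{N}$. For (MR2), given $s\in\I_{\mathcal{N}}$ decomposed as $r\ell$ via (1), the required Ore squares are built by pullback along the deflation part and pushout along the inflation part, each of which preserves membership in $\mathcal{L}$ respectively $\mathcal{R}$. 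Axiom (MR3) is the standard middle-morphism lifting produced by a $3\times 3$ construction using (ET4), applied to the two conflations in which the prescribed outer maps sit. Axiom (MR4) is a direct consequence of (1): any sandwich $v\bar{x}u$ with $u,v\in\overline{\I_{\mathcal{N}}}$ and $x$ an inflation (respectively deflation), after swapping via (1), is rewritten in the same sandwich form.

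\textbf{Part (4).} Once (MR1)--(MR4) are in hand, Theorem~\ref{th2-1} yields an extriangulated category $(\widetilde{\A},\widetilde{\E},\widetilde{\s})$ together with an exact $Q:\A\to\widetilde{\A}$. To promote this to a triangulated structure, I use biresolving to define shifts. For each $A\in\A$ choose an inflation $\iota_A:A\to N_A$ with $N_A\in\mathcal{N}$ and set $\Sigma A:=\cone(\iota_A)$, so that $A\to N_A\to\Sigma A\dashrightarrow$ is a conflation. Under $Q$, the middle term $N_A\in\mathcal{N}=N_{\I_{\mathcal{N}}}$ becomes zero, so one obtains a conflation $QA\to 0\to Q\Sigma A\dashrightarrow$ in $\widetilde{\A}$. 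Dually, biresolving from the other side supplies $\Omega$ together with conflations $Q\Omega A\to 0\to QA\dashrightarrow$. This two-sided ``zero-middle'' property is precisely the criterion for an extriangulated category to be triangulated (cf.~\cite{NP}): one verifies that $\widetilde{\Sigma}$ is a well-defined auto-equivalence of $\widetilde{\A}$ via (ET4) applied inside $\widetilde{\A}$, that $\widetilde{\E}(-,-)\cong\Hom_{\widetilde{\A}}(-,\widetilde{\Sigma}(-))$ as bifunctors, and that the $\widetilde{\s}$-conflations are exactly the distinguished triangles.

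The principal obstacle is the swap in (1), on which both (MR2) and (MR4) rest; the second delicate point is showing that $\widetilde{\Sigma}$ and $\widetilde{\Omega}$ are quasi-inverse, for which having $\mathcal{N}$ biresolving \emph{in both directions} is essential.
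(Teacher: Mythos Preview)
The paper does not supply a proof of this proposition; it is quoted from \cite{NOS} and used as a black box, as you correctly note. So there is no in-paper argument to compare your sketch against.

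That said, your reconstruction has a genuine gap in Part~(1). You correctly name the swap $\mathcal{L}\circ\mathcal{R}\subseteq\mathcal{R}\circ\mathcal{L}$ as the crux, but the factorization you then produce---with $X\to W$ in $\mathcal{R}$ and $W\to Z$ in $\mathcal{L}$---is again of the form ``inflation followed by deflation'', i.e.\ an element of $\mathcal{L}\circ\mathcal{R}$, not of $\mathcal{R}\circ\mathcal{L}$. What the swap actually requires is $X\to W$ a \emph{deflation} with cocone in $\mathcal{N}$ and $W\to Z$ an \emph{inflation} with cone in $\mathcal{N}$. More seriously, invoking ``(ET4) and its dual'' does not suffice here: your two conflations $X\to Y\to N_1$ and $N_2\to Y\to Z$ share their \emph{middle} object $Y$, whereas (ET4) and (ET4)$^{\rm op}$ apply to composable inflations or composable deflations sharing an endpoint. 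The argument in \cite{NOS} uses the \emph{biresolving} hypothesis precisely at this step---to manufacture an auxiliary inflation into, or deflation from, an object of $\mathcal{N}$ so that an (ET4)-type diagram becomes available---and you have not invoked biresolving anywhere in Part~(1). Since you yourself flag that (MR2) and (MR4) rest on this swap, the omission propagates.

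Your argument for (2) is also too quick: $\overline{\I_{\mathcal{N}}}$ is the closure of $p(\I_{\mathcal{N}})$ under composition with isomorphisms in the ideal quotient $\overline{\A}=\A/[\mathcal{N}]$, and such isomorphisms need not arise from isomorphisms in $\A$. Stability under images of isomorphisms of $\A$ is automatic from (M0) for \emph{any} $\I$, which is exactly why (2) is listed as a nontrivial conclusion specific to the biresolving case. Your outlines for Parts~(3) and~(4) are reasonable at the level of strategy.
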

\begin{definition}\label{def4-3}
 (1)(\cite{BBD}) A recollement of triangulated categories is a diagram of triangulated categories and triangle functors
$$\xymatrix{\mathcal{A}\ar[rr]|{i_{*}}&&\ar@/_1pc/[ll]|{i^{*}}\ar@/^1pc/[ll]|{i^{!}}\mathcal{B}
\ar[rr]|{j^{\ast}}&&\ar@/_1pc/[ll]|{j_{!}}\ar@/^1pc/[ll]|{j_{\ast}}\mathcal{C}},$$
satisfying \\
\begin{enumerate}
  \item [(R1)]$(i^{*},i_{\ast})$, $(i_{\ast},i^{!})$, and $(j_{!},j^{\ast})$, $(j^{\ast},j_{*})$ are adjoint pairs;
  \item [(R2)]$i_{\ast}$, $j_{*}$, $j_{!}$ are fully faithful functors;
  \item [(R3)]$j^{\ast}\circ i_{\ast}=0$;
  \item [(R4)]for each $X\in \mathcal{B}$ there are triangles
$$j_{!}j^{\ast}(X)\rightarrow X \rightarrow i_{\ast}i^{*}(X)\rightarrow i_{\ast}i^{!}(X)[1]$$
$$i_{\ast}i^{!}(X)\rightarrow X \rightarrow j_{*}j^{\ast}(X)\rightarrow j_{*}j^{\ast}(X)[1],$$
\end{enumerate}
where the arrows to and from $C$ are the counit and the unit, respectively.\\
(2) (\cite{P0,K,BGS}) A lower recollement of $\mathcal{C}'$ and $\mathcal{C}''$ is a diagram of triangle functors
$$\xymatrix{\mathcal{A}\ar[rr]|{i_{*}}&&\ar@/^1pc/[ll]|{i^{!}}\mathcal{B}
\ar[rr]|{j^{\ast}}&&\ar@/^1pc/[ll]|{j_{\ast}}\mathcal{C}}.$$
such that the conditions involved $i^{!}$, $i_{\ast}$, $j_{\ast}$, $j^{\ast}$ in (1) are satisfied. Dually, one can define the upper recollement.
\end{definition}

In what follows,  add$\mathcal{X}\subseteq \A$ denotes the smallest additive full subcategory containing $\mathcal{X}$, closed by isomorphisms and  direct summands. All extriangulated categories containing in  $\s$-recollements are assumed to satisfy the (WIC) condition.
\begin{proposition}\label{prop4-1}
Let {\rm($\mathcal{A}$, $\mathcal{B}$, $\mathcal{C}$)} be an $\s$-recollement of extriangulated categories. Assume that  $\mathcal{N}$ is a biresolving thick subcategory of $\B$ such that $i_{\ast}i^{!}(\mathcal{N})\subseteq \mathcal{N}$. Set {\rm$\mathcal{N}_{1}=\add i^{!}\mathcal{N}$} and {\rm$\mathcal{N}_{2}=\add j^{\ast}\mathcal{N}$}. If $i^{!}$ is exact and $j_{\ast}$ is left $\s$-exact, then there is a lower recollement of triangulated categories.
$$\xymatrix{\widetilde{\mathcal{A}}\ar[rr]|{\widetilde{i_{*}}}&&\ar@/^1pc/[ll]|{\widetilde{i^{!}}}\widetilde{\mathcal{B}}
\ar[rr]|{\widetilde{j^{\ast}}}&&\ar@/^1pc/[ll]|{\widetilde{j_{\ast}}}\widetilde{\mathcal{C}}}.$$
where $\widetilde{\mathcal{A}}$, $\widetilde{\mathcal{B}}$ and $\widetilde{\mathcal{C}}$ are the localizations by {\rm$\I_{\mathcal{N}_{1}}$}, $\I_{\mathcal{N}}$ and {\rm$\I_{\mathcal{N}_{2}}$}.
\end{proposition}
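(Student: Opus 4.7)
The plan is to invoke Proposition \ref{prop4-0}, Corollary \ref{cor2-1}, Theorem \ref{prop2-1}, and Corollary \ref{cor2-2} in sequence, and then assemble the lower recollement from the resulting data. First, I would verify that $\mathcal{N}_1 = \add i^!\mathcal{N}$ and $\mathcal{N}_2 = \add j^*\mathcal{N}$ are biresolving thick subcategories of $\A$ and $\C$, respectively. Thickness follows from the exactness of $i^!$ and $j^*$ together with $i^!i_* \cong \id_\A$ and $j^*j_! \cong \id_\C$ (Lemma \ref{lem2-1}(2), using that $i_*, j_!$ are fully faithful). For the biresolving property, given $A \in \A$, choose an inflation $i_*(A) \to N$ and a deflation $N' \to i_*(A)$ in $\B$ with $N, N' \in \mathcal{N}$; applying the exact functor $i^!$ yields the desired inflation $A \cong i^!i_*(A) \to i^!(N) \in \mathcal{N}_1$ and deflation $i^!(N') \to A$. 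The argument for $\mathcal{N}_2$ is analogous, starting from conflations issuing from $j_!(C)$. By Proposition \ref{prop4-0}, the three localizations $\widetilde{\A}, \widetilde{\B}, \widetilde{\C}$ are then triangulated categories.

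Next, I would check that each of $i_*, i^!, j^*, j_*$ descends to the localizations. The inclusions $i^!(\mathcal{N}) \subseteq \mathcal{N}_1$ and $j^*(\mathcal{N}) \subseteq \mathcal{N}_2$ are tautological, and $i_*(\mathcal{N}_1) = \add i_*i^!\mathcal{N} \subseteq \mathcal{N}$ by hypothesis. For $j_*$, I would first upgrade left $\s$-exactness to full exactness via Proposition \ref{prop3-6}(4); then applying Proposition \ref{prop3-3}(1) to $X \in \mathcal{N}$ produces an $\E_\B$-triangle
\[ i_*i^!(X) \longrightarrow X \longrightarrow j_*j^*(X) \dashrightarrow \]
in which both outer terms lie in $\mathcal{N}$ (the left one by hypothesis, the right being $X$ itself), so the 2-out-of-3 property forces $j_*j^*(X) \in \mathcal{N}$; hence $j_*(\mathcal{N}_2) \subseteq \mathcal{N}$. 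Lemma \ref{lemm4-1} combined with Corollary \ref{cor2-1} now produces exact functors $\widetilde{i_*}, \widetilde{i^!}, \widetilde{j^*}, \widetilde{j_*}$ commuting with the canonical localization functors.

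Third, I would invoke Theorem \ref{prop2-1} for the adjoint pairs $(i_*, i^!)$ and $(j^*, j_*)$, the condition $\overline{\I} = p(\I)$ being furnished for each of $\I_{\mathcal{N}_1}, \I_\mathcal{N}, \I_{\mathcal{N}_2}$ by Proposition \ref{prop4-0}(2). This yields the adjunctions $(\widetilde{i_*}, \widetilde{i^!})$ and $(\widetilde{j^*}, \widetilde{j_*})$, and Corollary \ref{cor2-2} transfers full faithfulness from $i_*$ and $j_*$ to $\widetilde{i_*}$ and $\widetilde{j_*}$. The vanishing $\widetilde{j^*}\widetilde{i_*} = 0$ is then immediate from $j^*i_* = 0$ via $\widetilde{j^*}\widetilde{i_*}\,Q_\A = Q_\C\,j^*i_* = 0$ together with essential surjectivity of $Q_\A$. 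Finally, the required triangles in $\widetilde{\B}$ are obtained by applying the exact functor $Q_\B$ to the $\E_\B$-triangles of Proposition \ref{prop3-3}(1), with naturality of the adjunction morphisms ensuring that the induced maps are precisely the unit of $(\widetilde{i_*}, \widetilde{i^!})$ and the counit of $(\widetilde{j^*}, \widetilde{j_*})$.

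The step I expect to be the main obstacle is the verification $j_*(\mathcal{N}_2) \subseteq \mathcal{N}$: it is the only place where the hypothesis $i_*i^!(\mathcal{N}) \subseteq \mathcal{N}$ is genuinely needed, and it requires first upgrading $j_*$ from left $\s$-exact to exact through Proposition \ref{prop3-6}(4) before applying the 2-out-of-3 property along the $\s$-recollement triangle. Every other ingredient is a fairly direct application of the localization machinery built in Sections 2 and 3.
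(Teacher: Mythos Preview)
Your proposal is correct and follows essentially the same route as the paper: verify that $\mathcal{N}_1,\mathcal{N}_2$ are biresolving thick (upgrading $j_\ast$ to exact via Proposition~\ref{prop3-6}(4) and using the triangle of Proposition~\ref{prop3-3}(1) to get $j_\ast j^\ast(\mathcal{N})\subseteq\mathcal{N}$), apply Lemma~\ref{lemm4-1} and Theorem~\ref{prop2-1} to descend the four exact functors and their adjunctions, transfer full faithfulness by Corollary~\ref{cor2-2}, and push the recollement triangle through $Q_\B$. One small correction to your commentary: the hypothesis $i_\ast i^!(\mathcal{N})\subseteq\mathcal{N}$ is not used \emph{only} for $j_\ast(\mathcal{N}_2)\subseteq\mathcal{N}$ --- you already invoked it for $i_\ast(\mathcal{N}_1)\subseteq\mathcal{N}$, and it is likewise what makes the 2-out-of-3 check for $\mathcal{N}_1$ go through (apply the exact functor $i_\ast$ to a conflation in $\A$, use $i_\ast(\mathcal{N}_1)\subseteq\mathcal{N}$ and thickness of $\mathcal{N}$, then return via $i^! i_\ast\cong\id_\A$); your stated reason ``exactness of $i^!$ together with $i^! i_\ast\cong\id_\A$'' is not by itself sufficient.
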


\begin{proof}
\par
First, we show that $\mathcal{N}_{1}$ and $\mathcal{N}_{2}$ are biresolving thick subcategory of $\A$ and $\C$, respectively.\par
For any conflation $X\rightarrow Y\rightarrow Z$, since $i_{\ast}$ is exact, we have a conflation $i_{\ast}(X)\rightarrow i_{\ast}(Y)\rightarrow i_{\ast}(Z)$ in $\B$.
Since $i_{\ast}i^{!}(\mathcal{N})\subseteq \mathcal{N}$ and $i_{\ast}$ is fully faithful, by the 2-out-of-3 property of $\mathcal{N}$, we know that $\mathcal{N}_{1}$ satisfies the 2-out-of-3 property.\par
For any $X\in\A$, $i_{\ast}(X)\in\B$. Then there is an  inflation $i_{\ast}(X)\rightarrow N$ and a deflation $N'\rightarrow i_{\ast}(X)$.  Since $i^{!}$ is exact, $i^{!}i_{\ast}(X)\rightarrow i^{!}(N)$ is an  inflation and $i^{!}(N')\rightarrow i^{!}i_{\ast}(X)$ is a deflation. Since $i_{\ast}$ is fully faithful, $i^{!}i_{\ast}(X)\cong X$. Thus, $\mathcal{N}_{1}$ is a biresolving thick subcategory of $\A$.\par
Since $i^{!}$ is exact and $j_{\ast}$ is left $\s$-exact, we know that $j_{\ast}$ is exact by Proposition \ref{prop3-6}. Moreover, by Proposition \ref{prop3-3}, there is an  $\mathbb{E}_\mathcal{B}$-triangle
\begin{equation}\label{eq4-1}
  i_\ast i^! (X)\rightarrow X\rightarrow j_\ast j^\ast (X)\dashrightarrow
\end{equation}
for any $X\in\B$. Then, for any $X\in\mathcal{N}$, by the 2-out-of-3 property, $j_\ast j^\ast (X)\in \mathcal{N}$. Hence, $j_\ast j^\ast (\mathcal{N})\subseteq \mathcal{N}$. By similar arguments of $\mathcal{N}_{1}$, we know that $\mathcal{N}_{2}$ is a biresolving thick subcategory of $\B$.\par
Now, we know that $i^{!}\mathcal{N}\subseteq \mathcal{N}_{1}$, $j^\ast (\mathcal{N})\subseteq \mathcal{N}_{2}$, $i_{\ast}(\mathcal{N}_{1})\subseteq \mathcal{N}$ and $j_\ast(\mathcal{N}_{2})\subseteq \mathcal{N}$. Moreover, these four functors are all exact. By Lemma \ref{lemm4-1}, Theorem \ref{prop2-1}, and Proposition \ref{prop4-0}, there is a diagram of exact  functors
\begin{equation}\label{eq4-2}
  \text{$\xymatrix{\widetilde{\mathcal{A}}\ar[rr]|{\widetilde{i_{*}}}&&\ar@/^1pc/[ll]|{\widetilde{i^{!}}}\widetilde{\mathcal{B}}
\ar[rr]|{\widetilde{j^{\ast}}}&&\ar@/^1pc/[ll]|{\widetilde{j_{\ast}}}\widetilde{\mathcal{C}}}.$}
\end{equation}
where $\widetilde{\mathcal{A}}$, $\widetilde{\mathcal{B}}$ and $\widetilde{\mathcal{C}}$ are the localizations by {\rm$\I_{\mathcal{N}_{1}}$}, $\I_{\mathcal{N}}$ and {\rm$\I_{\mathcal{N}_{2}}$}, ($\widetilde{i_{*}}$, $\widetilde{i^{!}}$) and ($\widetilde{j^{\ast}}$, $\widetilde{j_{\ast}}$) are adjoint pairs. By Corollary \ref{cor2-2}, we know that $\widetilde{i_{*}}$ and $\widetilde{j_{\ast}}$ are fully faithful. By Corollary \ref{cor2-1}, $\widetilde{i_{*}}\widetilde{j^\ast}(Q_{\A}(X))=Q_{\A}i_{*}j^\ast(X)=Q_{\A}(0)=0$,  $Q_{\A}i_\ast i^! (X)=\widetilde{i_\ast} \widetilde{i^!} (Q_{\A}(X))$ and $Q_{\A}j_\ast j^\ast (X)=\widetilde{j_\ast} \widetilde{j^\ast}(Q_{\A}(X))$, for any $X\in\B$. Then, applying the exact functor $Q_{A}$ to the $\mathbb{E}_\mathcal{B}$-triangle (\ref{eq4-1}), we have the following $\widetilde{\E}_{\widetilde{\B}}$-triangle
$$\widetilde{i_\ast} \widetilde{i^!}(Q_{\A}(X))\rightarrow Q_{\A}(X)\rightarrow \widetilde{j_\ast} \widetilde{j^\ast} (Q_{\A}(X))\dashrightarrow,$$
which is a triangle of triangulated category $\widetilde{\B}$.\par
Therefore, the diagram (\ref{eq4-2}) is a lower recollement of triangulated categories.
\end{proof}
Dually, we have the following result.
\begin{proposition}
Let {\rm($\mathcal{A}$, $\mathcal{B}$, $\mathcal{C}$)} be an $\s$-recollement of extriangulated categories. Assume that  $\mathcal{N}$ is a biresolving thick subcategory of $\B$ such that $i_{\ast}i^{\ast}(\mathcal{N})\subseteq \mathcal{N}$. Set {\rm$\mathcal{N}_{1}=\add i^{\ast}\mathcal{N}$} and {\rm$\mathcal{N}_{2}=\add j^{\ast}\mathcal{N}$}. If $i^{\ast}$ is exact and $j_{!}$ is right $\s$-exact, then there is an upper recollement of triangulated categories.
$$\xymatrix{\widetilde{\mathcal{A}}\ar[rr]|{\widetilde{i_{*}}}&&\ar@/_1pc/[ll]|{\widetilde{i^{*}}}\widetilde{\mathcal{B}}
\ar[rr]|{\widetilde{j^{\ast}}}&&\ar@/_1pc/[ll]|{\widetilde{j_{!}}}\widetilde{\mathcal{C}}},$$
where $\widetilde{\mathcal{A}}$, $\widetilde{\mathcal{B}}$ and $\widetilde{\mathcal{C}}$ are the localizations by {\rm$\I_{\mathcal{N}_{1}}$}, $\I_{\mathcal{N}}$ and {\rm$\I_{\mathcal{N}_{2}}$}.
\end{proposition}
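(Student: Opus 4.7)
The strategy is entirely dual to that of Proposition~\ref{prop4-1}, so I will sketch the plan and point out the one place where dualization requires care. First, I would verify that $\mathcal{N}_1 = \add i^{\ast}\mathcal{N}$ and $\mathcal{N}_2 = \add j^{\ast}\mathcal{N}$ are biresolving thick subcategories of $\mathcal{A}$ and $\mathcal{C}$ respectively. The $2$-out-of-$3$ property for $\mathcal{N}_1$ uses the exactness of $i_{\ast}$ together with the hypothesis $i_{\ast}i^{\ast}(\mathcal{N})\subseteq \mathcal{N}$ and the fully faithfulness of $i_{\ast}$, exactly as in Proposition~\ref{prop4-1}. For biresolution of any $X\in\mathcal{A}$, I pick an inflation $i_{\ast}(X)\to N$ and a deflation $N'\to i_{\ast}(X)$ in $\mathcal{B}$ with $N,N'\in\mathcal{N}$ and apply $i^{\ast}$ (which is exact by hypothesis), using $i^{\ast}i_{\ast}\cong \id$ to conclude.

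For $\mathcal{N}_2$, I need to check that $j^{\ast}(\mathcal{N})\subseteq\mathcal{N}_2$ is stable enough, and more importantly that $j_! j^{\ast}(\mathcal{N})\subseteq \mathcal{N}$ so that the relevant $\mathbb{E}_{\B}$-triangle (analogous to \eqref{eq4-1} but for the pair $(j_!j^{\ast},i_{\ast}i^{\ast})$) exists. Indeed, since $i^{\ast}$ is exact and $j_{!}$ is right $\s$-exact, Proposition~\ref{prop3-6}(3) gives that $j_{!}$ is exact, and Proposition~\ref{prop3-3}(2) produces the $\mathbb{E}_{\B}$-triangle
\begin{equation}\label{eq-dual-triangle}
  j_! j^\ast (X)\rightarrow X\rightarrow i_\ast i^\ast (X)\dashrightarrow
\end{equation}
for every $X\in\mathcal{B}$. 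Applying the $2$-out-of-$3$ property of $\mathcal{N}$ to \eqref{eq-dual-triangle} together with $i_{\ast}i^{\ast}(\mathcal{N})\subseteq\mathcal{N}$ yields $j_! j^{\ast}(\mathcal{N})\subseteq \mathcal{N}$. From this I can verify biresolution of $\mathcal{N}_2\subseteq \mathcal{C}$ by applying the exact functor $j^{\ast}$ to a biresolving conflation around $j_!(X)$ for $X\in\mathcal{C}$, using $j^{\ast}j_{!}\cong \id$.

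Next, I would check the four inclusions $i^{\ast}(\mathcal{N})\subseteq \mathcal{N}_1$, $j^{\ast}(\mathcal{N})\subseteq \mathcal{N}_2$, $i_{\ast}(\mathcal{N}_1)\subseteq \mathcal{N}$, $j_!(\mathcal{N}_2)\subseteq \mathcal{N}$. The first two are immediate from the definitions of $\mathcal{N}_1,\mathcal{N}_2$; the third follows from the hypothesis $i_{\ast}i^{\ast}(\mathcal{N})\subseteq \mathcal{N}$; and the fourth follows from $j_!j^{\ast}(\mathcal{N})\subseteq \mathcal{N}$ proved above, since direct summands of objects in $\mathcal{N}$ remain in $\mathcal{N}$ by thickness. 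Since all four functors $i_{\ast}, i^{\ast}, j^{\ast}, j_!$ are exact, Lemma~\ref{lemm4-1} implies they send the respective $\I_{\bullet}$ classes into one another, so Theorem~\ref{prop2-1} (together with Proposition~\ref{prop4-0}) produces exact functors $\widetilde{i_{\ast}},\widetilde{i^{\ast}},\widetilde{j^{\ast}},\widetilde{j_!}$ between the localizations $\widetilde{\mathcal{A}},\widetilde{\mathcal{B}},\widetilde{\mathcal{C}}$, forming adjoint pairs $(\widetilde{i^{\ast}},\widetilde{i_{\ast}})$ and $(\widetilde{j_!},\widetilde{j^{\ast}})$. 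Full faithfulness of $\widetilde{i_{\ast}}$ and $\widetilde{j_!}$ follows from Corollary~\ref{cor2-2}, while $\widetilde{j^{\ast}}\widetilde{i_{\ast}}=0$ follows from $j^{\ast}i_{\ast}=0$ via the commutative diagrams of Corollary~\ref{cor2-1}.

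Finally, I would apply the exact localization functor $Q_{\mathcal{B}}:\mathcal{B}\to \widetilde{\mathcal{B}}$ to the $\mathbb{E}_{\mathcal{B}}$-triangle \eqref{eq-dual-triangle}. Using the intertwining identities $Q_{\mathcal{B}}\circ j_!j^{\ast}=\widetilde{j_!}\widetilde{j^{\ast}}\circ Q_{\mathcal{B}}$ and $Q_{\mathcal{B}}\circ i_{\ast}i^{\ast}=\widetilde{i_{\ast}}\widetilde{i^{\ast}}\circ Q_{\mathcal{B}}$ from Corollary~\ref{cor2-1}, this produces, for every $\overline{X}\in\widetilde{\mathcal{B}}$, an $\widetilde{\mathbb{E}}_{\widetilde{\mathcal{B}}}$-triangle
\[
\widetilde{j_!}\widetilde{j^{\ast}}(\overline{X})\to \overline{X}\to \widetilde{i_{\ast}}\widetilde{i^{\ast}}(\overline{X})\dashrightarrow
\]
with the unit and counit arrows induced from those downstairs. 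Since $\widetilde{\mathcal{B}}$ is triangulated by Proposition~\ref{prop4-0}(4), this is a distinguished triangle, verifying the upper-recollement axiom. The main subtle point, and the only place the proof is not literally symbol-for-symbol dual to Proposition~\ref{prop4-1}, is the verification that $j_!j^{\ast}(\mathcal{N})\subseteq \mathcal{N}$, which forces one to invoke the exactness of $j_!$ deduced from Proposition~\ref{prop3-6}(3); once this is established, everything else proceeds by formal dualization.
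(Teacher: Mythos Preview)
Your proof is correct and follows exactly the approach the paper intends: the paper simply states this proposition with the phrase ``Dually, we have the following result'' and gives no separate proof, so your detailed dualization of the argument of Proposition~\ref{prop4-1} is precisely what is required. One minor remark: your claim that the verification of $j_!j^\ast(\mathcal{N})\subseteq\mathcal{N}$ is ``the only place the proof is not literally symbol-for-symbol dual'' is slightly overstated, since in Proposition~\ref{prop4-1} the corresponding inclusion $j_\ast j^\ast(\mathcal{N})\subseteq\mathcal{N}$ is proved by the identical mechanism (deduce exactness of $j_\ast$ from Proposition~\ref{prop3-6}, invoke the $\mathbb{E}_\mathcal{B}$-triangle of Proposition~\ref{prop3-3}, then apply $2$-out-of-$3$); so this step is in fact perfectly dual as well.
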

\begin{corollary}\label{cor4-1}
Let {\rm($\mathcal{A}$, $\mathcal{B}$, $\mathcal{C}$)} be an $\s$-recollement of extriangulated categories. Assume that  $\mathcal{N}$ is a biresolving thick subcategory of $\B$ such that $i_{\ast}i^{!}(\mathcal{N})\subseteq \mathcal{N}$ and $j_{!}j^{\ast}(\mathcal{N})\subseteq\mathcal{N}$. Set {\rm$\mathcal{N}_{1}=\add i^{!}\mathcal{N}$} and {\rm$\mathcal{N}_{2}=\add j^{\ast}\mathcal{N}$}. If both $i^{!}$ and $j_{!}$ are exact, $j_{\ast}$ is left $\s$-exact, then there is a recollement of triangulated categories.
\begin{equation}\label{dia4-1}
\xymatrix{\widetilde{\mathcal{A}}\ar[rr]|{\widetilde{i_{*}}}&&\ar@/_1pc/[ll]|{\widetilde{i^{*}}}\ar@/^1pc/[ll]|{\widetilde{i^{!}}}\widetilde{\mathcal{B}}
\ar[rr]|{\widetilde{j^{\ast}}}&&\ar@/_1pc/[ll]|{\widetilde{j_{!}}}\ar@/^1pc/[ll]|{\widetilde{j_{\ast}}}\widetilde{\mathcal{C}}},
\end{equation}
where $\widetilde{\mathcal{A}}$, $\widetilde{\mathcal{B}}$ and $\widetilde{\mathcal{C}}$ are the localizations by {\rm$\I_{\mathcal{N}_{1}}$}, $\I_{\mathcal{N}}$ and {\rm$\I_{\mathcal{N}_{2}}$}.
\end{corollary}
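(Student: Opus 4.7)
The plan is to piece together the required recollement by applying Proposition \ref{prop4-1} for the lower half, then separately constructing the remaining functors $\widetilde{j_!}$ and $\widetilde{i^\ast}$. Since $i^!$ is exact and $j_\ast$ is left $\s$-exact, the hypotheses of Proposition \ref{prop4-1} are met, and it supplies the lower half of diagram (\ref{dia4-1}): $\widetilde{\mathcal{A}}$, $\widetilde{\mathcal{B}}$, $\widetilde{\mathcal{C}}$ are triangulated categories, $\widetilde{i_\ast}$ and $\widetilde{j_\ast}$ are fully faithful triangle functors, $(\widetilde{i_\ast},\widetilde{i^!})$ and $(\widetilde{j^\ast},\widetilde{j_\ast})$ are adjoint pairs, $\widetilde{j^\ast}\widetilde{i_\ast}=0$, and for each $X\in\widetilde{\mathcal{B}}$ there is a distinguished triangle $\widetilde{i_\ast}\widetilde{i^!}(X)\to X\to\widetilde{j_\ast}\widetilde{j^\ast}(X)\dashrightarrow$.

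Next, I would produce $\widetilde{j_!}$ via Theorem \ref{prop2-1} applied to the adjoint pair $(j_!,j^\ast)$. Both functors are exact ($j^\ast$ by (SR1), $j_!$ by hypothesis), so by Lemma \ref{lemm4-1} it suffices to check that $j_!(\mathcal{N}_2)\subseteq\mathcal{N}$ and $j^\ast(\mathcal{N})\subseteq\mathcal{N}_2$. The second inclusion is definitional; for the first, any $Y\in\mathcal{N}_2=\add j^\ast\mathcal{N}$ is a direct summand of some $j^\ast(N)$, whence $j_!(Y)$ is a summand of $j_!j^\ast(N)\in\mathcal{N}$ by assumption, forcing $j_!(Y)\in\mathcal{N}$ by thickness. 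Theorem \ref{prop2-1} then yields the adjoint pair $(\widetilde{j_!},\widetilde{j^\ast})$; Corollary \ref{cor2-2} guarantees $\widetilde{j_!}$ is fully faithful; and Corollary \ref{cor2-1} ensures $\widetilde{j_!}$ is a triangle functor.

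Finally, to obtain $\widetilde{i^\ast}$ I would work directly inside the triangulated category $\widetilde{\mathcal{B}}$, since the hypothesis does not assert that $i^\ast$ is exact and Theorem \ref{prop2-1} cannot be invoked on $(i^\ast,i_\ast)$. For any $Y\in\widetilde{\mathcal{B}}$, extend the counit $\widetilde{\varepsilon}_Y\colon\widetilde{j_!}\widetilde{j^\ast}(Y)\to Y$ to a distinguished triangle
\[
\widetilde{j_!}\widetilde{j^\ast}(Y)\xrightarrow{\widetilde{\varepsilon}_Y}Y\to C_Y\dashrightarrow.
\]
Applying $\widetilde{j^\ast}$ and using that $\widetilde{j^\ast}\widetilde{\varepsilon}_Y$ is an isomorphism (a general fact for fully faithful left adjoints) gives $\widetilde{j^\ast}(C_Y)=0$. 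The triangulated analog of Lemma \ref{lem3-2}, immediate from the lower recollement triangle established in the first step, yields $C_Y\in\Im\widetilde{i_\ast}$; since $\widetilde{i_\ast}$ is fully faithful, this determines $\widetilde{i^\ast}(Y)$ uniquely up to isomorphism, and functoriality together with the adjunction $(\widetilde{i^\ast},\widetilde{i_\ast})$ follow from the $\Hom$ long exact sequence of the displayed triangle combined with $\widetilde{j^\ast}\widetilde{i_\ast}=0$. The displayed triangle then plays the role of the second recollement triangle. This last step is the principal obstacle, since exactness of $i^\ast$ is unavailable at the extriangulated level; the workaround is to exploit the triangulated structure of $\widetilde{\mathcal{B}}$ already in hand, after which the cone construction and adjunction verification are standard.
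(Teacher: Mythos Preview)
Your proposal is correct and follows essentially the same strategy as the paper: invoke Proposition~\ref{prop4-1} for the lower recollement, then use Theorem~\ref{prop2-1} and Corollary~\ref{cor2-2} on the pair $(j_!,j^\ast)$ to obtain the fully faithful left adjoint $\widetilde{j_!}$. The only difference is in the final step: the paper cites \cite[Corollary~2.6]{YG} to deduce that $\widetilde{i_\ast}$ admits a left adjoint $\widetilde{i^\ast}$ completing the upper recollement, whereas you unpack this by hand, constructing $\widetilde{i^\ast}(Y)$ as the object representing the cone of the counit $\widetilde{j_!}\widetilde{j^\ast}(Y)\to Y$ inside $\Im\widetilde{i_\ast}=\Ker\widetilde{j^\ast}$. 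Your direct argument is precisely the standard content behind that citation, so the two routes are the same in substance; yours is more self-contained, while the paper's is more economical.
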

\begin{proof}
By Proposition \ref{prop4-1}, there is a lower recollement of triangulated categories.
$$\xymatrix{\widetilde{\mathcal{A}}\ar[rr]|{\widetilde{i_{*}}}&&\ar@/^1pc/[ll]|{\widetilde{i^{!}}}\widetilde{\mathcal{B}}
\ar[rr]|{\widetilde{j^{\ast}}}&&\ar@/^1pc/[ll]|{\widetilde{j_{\ast}}}\widetilde{\mathcal{C}}}.$$
where $\widetilde{\mathcal{A}}$, $\widetilde{\mathcal{B}}$ and $\widetilde{\mathcal{C}}$ are the localizations by {\rm$\I_{\mathcal{N}_{1}}$}, $\I_{\mathcal{N}}$ and {\rm$\I_{\mathcal{N}_{2}}$}. Then $\widetilde{\mathcal{A}}\cong \Ker j^{\ast}$.\par
Since $j_{!}j^{\ast}(\mathcal{N})\subseteq\mathcal{N}$ and $j_{!}$ is exact, by Theorem \ref{prop2-1} and Corollary \ref{cor2-2}, the exact functor $\widetilde{j^{\ast}}$ admits a fully faithful left adjoint functor $\widetilde{j_{!}}$. By \cite[Corollary 2.6]{YG}, the embedding  functor $\widetilde{i_{\ast}}$ admits a  left adjoint functor $\widetilde{i^{\ast}}$ such that $$\xymatrix{\widetilde{\mathcal{A}}\ar[rr]|{\widetilde{i_{*}}}&&\ar@/_1pc/[ll]|{\widetilde{i^{*}}}\widetilde{\mathcal{B}}
\ar[rr]|{\widetilde{j^{\ast}}}&&\ar@/_1pc/[ll]|{\widetilde{j_{!}}}\widetilde{\mathcal{C}}},$$
is an upper recollement. Then, we have the reollement (\ref{dia4-1}).
\end{proof}
\begin{corollary}
Let {\rm($\mathcal{A}$, $\mathcal{B}$, $\mathcal{C}$)} be an $\s$-recollement of extriangulated categories. Assume that  $\mathcal{N}$ is a biresolving thick subcategory of $\B$ such that $i_{\ast}i^{\ast}(\mathcal{N})\subseteq \mathcal{N}$ and $j_{\ast}j^{\ast}(\mathcal{N})\subseteq\mathcal{N}$. Set {\rm$\mathcal{N}_{1}=\add i^{!}\mathcal{N}$} and {\rm$\mathcal{N}_{2}=\add j^{\ast}\mathcal{N}$}. If both $i^{\ast}$ and $j_{\ast}$ are exact, $j_{!}$ is right $\s$-exact, then there is a recollement of triangulated categories.
$$\xymatrix{\widetilde{\mathcal{A}}\ar[rr]|{\widetilde{i_{*}}}&&\ar@/_1pc/[ll]|{\widetilde{i^{*}}}\ar@/^1pc/[ll]|{\widetilde{i^{!}}}\widetilde{\mathcal{B}}
\ar[rr]|{\widetilde{j^{\ast}}}&&\ar@/_1pc/[ll]|{\widetilde{j_{!}}}\ar@/^1pc/[ll]|{\widetilde{j_{\ast}}}\widetilde{\mathcal{C}}},$$
where $\widetilde{\mathcal{A}}$, $\widetilde{\mathcal{B}}$ and $\widetilde{\mathcal{C}}$ are the localizations by {\rm$\I_{\mathcal{N}_{1}}$}, $\I_{\mathcal{N}}$ and {\rm$\I_{\mathcal{N}_{2}}$}.
\end{corollary}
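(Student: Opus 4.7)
The plan is to mirror the argument given for Corollary \ref{cor4-1}, exploiting duality between the lower and upper recollement setups. First, I would verify that the hypotheses of the immediately preceding Proposition are met: $\mathcal{N}$ is biresolving thick, $i_{\ast}i^{\ast}(\mathcal{N})\subseteq\mathcal{N}$, the functor $i^{\ast}$ is exact, and $j_{!}$ is right $\s$-exact. Applying that Proposition yields an upper recollement
$$\xymatrix{\widetilde{\mathcal{A}}\ar[rr]|{\widetilde{i_{*}}}&&\ar@/_1pc/[ll]|{\widetilde{i^{*}}}\widetilde{\mathcal{B}}
\ar[rr]|{\widetilde{j^{\ast}}}&&\ar@/_1pc/[ll]|{\widetilde{j_{!}}}\widetilde{\mathcal{C}}}$$
of triangulated categories, in which $\widetilde{i_{\ast}}$ and $\widetilde{j_{!}}$ are fully faithful and $\widetilde{\A}\cong\Ker\widetilde{j^{\ast}}$.

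Next, I would promote this to a full recollement by producing a right adjoint $\widetilde{j_{\ast}}$ of $\widetilde{j^{\ast}}$. For this, apply Theorem \ref{prop2-1} to the adjoint pair $(j^{\ast},j_{\ast})$ between $\B$ and $\C$ with localizing sets $\I_{\mathcal{N}}$ and $\I_{\mathcal{N}_{2}}$. Since $j^{\ast}(\mathcal{N})\subseteq\add j^{\ast}\mathcal{N}=\mathcal{N}_{2}$ and $j^{\ast}$ is exact, Lemma \ref{lemm4-1} gives $j^{\ast}(\I_{\mathcal{N}})\subseteq\I_{\mathcal{N}_{2}}$. Conversely, the assumption $j_{\ast}j^{\ast}(\mathcal{N})\subseteq\mathcal{N}$ together with $j_{\ast}$ being exact yields $j_{\ast}(\mathcal{N}_{2})\subseteq\mathcal{N}$, hence $j_{\ast}(\I_{\mathcal{N}_{2}})\subseteq\I_{\mathcal{N}}$ by Lemma \ref{lemm4-1}. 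Theorem \ref{prop2-1} then supplies an adjoint pair $(\widetilde{j^{\ast}},\widetilde{j_{\ast}})$ between triangulated categories, and Corollary \ref{cor2-2}(1) guarantees that $\widetilde{j_{\ast}}$ is fully faithful.

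Finally, I would invoke the dual of \cite[Corollary 2.6]{YG}: given an upper recollement of triangulated categories together with a fully faithful right adjoint $\widetilde{j_{\ast}}$ of $\widetilde{j^{\ast}}$, the embedding $\widetilde{i_{\ast}}$ automatically acquires a right adjoint $\widetilde{i^{!}}$ fitting into a full recollement. Assembling the two adjoint triples $(\widetilde{i^{\ast}},\widetilde{i_{\ast}},\widetilde{i^{!}})$ and $(\widetilde{j_{!}},\widetilde{j^{\ast}},\widetilde{j_{\ast}})$ produces exactly the diagram claimed.

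The main technical obstacle is the second step: verifying that $j_{\ast}$ really does send $\I_{\mathcal{N}_{2}}$ into $\I_{\mathcal{N}}$. The hypothesis $j_{\ast}j^{\ast}(\mathcal{N})\subseteq\mathcal{N}$ gives this only on generators of $\mathcal{N}_{2}$; to extend it to all of $\add j^{\ast}\mathcal{N}$ and then to every finite composition of inflations and deflations in $\I_{\mathcal{N}_{2}}$, one must use that thick subcategories are closed under direct summands and that $j_{\ast}$, being exact, preserves the generating classes $\mathcal{R}$ and $\mathcal{L}$. Once this is in hand, the rest is a formal application of Theorem \ref{prop2-1}, Corollary \ref{cor2-2}, and the cited extension lemma, exactly in parallel with the proof of Corollary \ref{cor4-1}.
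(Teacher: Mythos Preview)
Your proposal is correct and matches the paper's intended argument: the paper gives no explicit proof for this corollary, leaving it as the dual of Corollary~\ref{cor4-1}, and your proof is precisely that dualization---apply the upper-recollement Proposition, use Theorem~\ref{prop2-1} and Corollary~\ref{cor2-2} on the pair $(j^{\ast},j_{\ast})$ to obtain a fully faithful $\widetilde{j_{\ast}}$, then invoke the dual of \cite[Corollary~2.6]{YG}. Your discussion of the technical point (why $j_{\ast}(\mathcal{N}_{2})\subseteq\mathcal{N}$ follows from $j_{\ast}j^{\ast}(\mathcal{N})\subseteq\mathcal{N}$ via closure of $\mathcal{N}$ under direct summands) is accurate and in fact fills in a detail the paper glosses over in the proof of Corollary~\ref{cor4-1} as well.
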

\begin{example}
Let $A$ be a finite dimension algebra over field $k$ and $T_{2}(A)$ be the upper triangular matrix algebra $\begin{bmatrix}\begin{smallmatrix} A & A\\0&A\end{smallmatrix}\end{bmatrix}
$. It is well-known that each left $\Lambda$-module  can be uniquely described as $\begin{bmatrix}\begin{smallmatrix} X \\Y\end{smallmatrix}\end{bmatrix}_{f}$, where $f:Y\rightarrow X$ is a left $\Lambda$-module homomorphism. \par
By \cite[Example 2.12]{P}, we have a recollement of module categories

\begin{equation}\label{eqEr1}
\xymatrix{\textrm{Mod} A\ar[rr]|{i_{*}}&&\ar@/_1pc/[ll]|{i^{*}}\ar@/^1pc/[ll]|{i^{!}}\textrm{Mod} T_{2}(A)
\ar[rr]|{j^{\ast}}&&\ar@/_1pc/[ll]|{j_{!}}\ar@/^1pc/[ll]|{j_{\ast}}\textrm{Mod} A.}
\end{equation}
where
\begin{align*}
i^{\ast}\big(\begin{bmatrix}\begin{smallmatrix} X \\Y\end{smallmatrix}\end{bmatrix}_{f}\big)&=\textrm{Coker} f & i_{\ast}(X)&=\begin{bmatrix}\begin{smallmatrix} X \\\textrm{O}\end{smallmatrix}\end{bmatrix}_{0}& i^{!}\big(\begin{bmatrix}\begin{smallmatrix} X \\Y\end{smallmatrix}\end{bmatrix}_{f}\big)&=X\\
j_!(Y)& =\begin{bmatrix}\begin{smallmatrix} Y \\Y\end{smallmatrix}\end{bmatrix}_{1}
& j^{\ast}\big(\begin{bmatrix}\begin{smallmatrix} X \\Y\end{smallmatrix}\end{bmatrix}_{f}\big)&=Y
& j_\ast(Y)&=\begin{bmatrix}\begin{smallmatrix}  \textrm{O} \\Y\end{smallmatrix}\end{bmatrix}_{0}.
\end{align*}
Here, all functors but $i^{\ast}$ are exact.\par
Now, we lift the recollement  (\ref{eqEr1}) of module categories to that of categories of chain complexes. Let $A$ and $\B$ be abelian categories. We denote by $\textrm{Ch}(\A)$ the category of chain
complexes over $\A$. Let $\textrm{Ch}_{ac}(\A)$ be the subcategory of acyclic complexes. By \cite[Example 2]{Rump}, we know that $\textrm{Ch}_{ac}(\A)$ is a biresolving subcategory of $\textrm{Ch}_{ac}(\A)$ and the localization $\widetilde{\textrm{Ch}(\A)}$ by $\I_{\textrm{Ch}_{ac}(\A)}$ is just the derived category $D(\A)$. Let $F:\A\rightarrow \B$ be a additive functor. For any $X^{\bullet}=(X^{n},d^{n})\in\textrm{Ch}(\A)$, define $F^{C}(X^{\bullet})$ (or written as $F_{C}(X^{\bullet})$) to be the complex of which the component and differentation of degree $n$ are $F(X^{n})$ and $F(d^{n})$ respectively, for any $n\in\mathbb{Z}$.  Then, by \cite[Theorem 5.3]{Bautista}, we have a new recollement of abelian categories
\begin{equation}\label{eqEr2}
\xymatrix{\textrm{Ch} (A)\ar[rr]|{i_{*}^{C}}&&\ar@/_1pc/[ll]|{i^{*}_{C}}\ar@/^1pc/[ll]|{i^{!}_{C}}\textrm{Ch} (T_{2}(A))
\ar[rr]|{j^{\ast}_{C}}&&\ar@/_1pc/[ll]|{j_{!}^{C}}\ar@/^1pc/[ll]|{j_{\ast}^{C}}\textrm{Ch} (A).}
\end{equation}
It is easy to check that all functors but $i^{\ast}_{C}$ are exact since all functors contained in the recollement  (\ref{eqEr1}) but $i^{\ast}$ are exact.  Note that $\mathcal{N}_{1}=j^{\ast}_{C}(\textrm{Ch}_{ac}(T_{2}(A)))=\textrm{Ch}_{ac}(A)$ and $\mathcal{N}_{2}=i^{!}(\textrm{Ch}_{ac}(T_{2}(A)))=\textrm{Ch}_{ac}(A)$. Since $j_{!}^{C}$  is exact, $j_{!}^{C}(\textrm{Ch}_{ac}(A))\subseteq \textrm{Ch}_{ac}(T_{2}(A))$. By Corollary \ref{cor4-1}, we get the following recollement of derived categories
 \begin{equation*}
\xymatrix{\textrm{D} (\textrm{Mod}A)\ar[rr]|{i_{*}}&&\ar@/_1pc/[ll]|{i^{*}}\ar@/^1pc/[ll]|{i^{!}}\textrm{D} (\textrm{Mod}T_{2}(A))
\ar[rr]|{j^{\ast}}&&\ar@/_1pc/[ll]|{j_{!}}\ar@/^1pc/[ll]|{j_{\ast}}\textrm{D} (\textrm{Mod}A).}
\end{equation*}

\end{example}
\vspace{1cm}

\end{document}